\theoremstyle{plain}
\newtheorem{theorem}{Theorem}
\newtheorem{lemma}[theorem]{Lemma}
\newtheorem{corollary}[theorem]{Corollary}
\newtheorem{definition}[theorem]{Definition}
\newtheorem{remark}[theorem]{Remark}
\theoremstyle{definition}
\newcommand{\abs}[1]{\left\lvert#1\right\rvert}
\newcommand{\rest}[2]{#1\!\!\restriction_{#2}}
\newcommand{\osg}[1]{\left[#1\right]^{\prec}}
\newcommand{\N}{\mathbb{N}}
\newcommand{\Q}{\mathbb{Q}}
\newcommand{\R}{\mathbb{R}}
\newcommand{\X}{\{0,1\}^*}
\newcommand{\Bm}[2]{\lambda_{#1}\left(#2\right)}
\newcommand{\PS}{\mathbb{P}}%
\newcommand{\noi}{\noindent}
\begin{document}

\begin{center}
{\Large \textbf{An effectivization of the law of large numbers for
algorithmically random sequences
and its absolute speed limit of convergence}}
\end{center}

\vspace{0mm}

\begin{center}
Kohtaro Tadaki
\end{center}

\vspace{-5mm}

\begin{center}
Department of Computer Science, College of Engineering, Chubu University\\
1200 Matsumoto-cho, Kasugai-shi, Aichi 487-8501, Japan\\
E-mail: \textsf{tadaki@isc.chubu.ac.jp}\\
\url{http://www2.odn.ne.jp/tadaki/}
\end{center}

\vspace{-2mm}

\begin{quotation}
\noi\textbf{Abstract.}
The law of large numbers is one of the fundamental properties which
algorithmically random infinite sequences ought to satisfy.
In this paper, we show that the law of large numbers can be effectivized
for an arbitrary Schnorr random infinite sequence,
with respect to an arbitrary computable Bernoulli measure.
Moreover, we show that an absolute speed limit of convergence exists in this effectivization,
and it equals $2$ in a certain sense.
In the paper, we also provide the corresponding effectivization of
almost sure convergence in
the strong law of large numbers, and its absolute speed limit
of convergence,
in the context of probability theory,
with respect to
a large class of probability spaces and
i.i.d.~random variables on them,
which are not necessarily computable.
\end{quotation}

\begin{quotation}
\noi\textit{Key words\/}:
the law of large numbers,
effectivization,
algorithmic randomness,
Schnorr randomness,
Martin-L\"of randomness,
central limit theorem,
Bernoulli measure
\end{quotation}

\section{Introduction}

Algorithmic randomness is a field of mathematics which
enables us to consider the randomness of an individual infinite sequence
\cite{C87b,C02,LV08,N09,DH10}.
Thus,
algorithmically random infinite sequences are the subject of algorithmic randomness.
One of the fundamental properties which algorithmically random infinite sequences ought to satisfy is the \emph{law of large numbers}~%
\cite{vM57,Wa36,Wa37,Ch40,vM64,M66,Sch71}.
In this paper, we study an \emph{effectivization} of the law of large numbers for algorithmically random infinite sequences.

 In algorithmic randomness,
two of the major
and
historically oldest
randomness notions
are
Martin-L\"of randomness~\cite{M66} and its generalization,
Schnorr randomness~\cite{Sch71}.
The law of large numbers
actually
holds
for an arbitrary Martin-L\"of random infinite sequence and, more generally,
for an arbitrary Schnorr random infinite
sequence
\cite{M66,Sch71,C87b,C02,LV08,N09,DH10}.
This result is with respect to Lebesgue measure.
More generally, it is well-known that,
with respect to
a
\emph{computable} Bernoulli measure,
the law of large numbers holds
for an arbitrary Martin-L\"of random
sequence and
further
for an arbitrary Schnorr random
sequence
(see Lutz~\cite{Lutz03}, Downey, Merkle, and Reimann~\cite{DMR06}, and
Downey and Hirschfeldt~\cite{DH10}).

In contrast,
in our former work~\cite{T14,T15},
we showed that,
with respect to an \emph{arbitrary} Bernoulli measure,
the law of large numbers still holds for an arbitrary Martin-L\"of random infinite sequence.
In this result,
the underlying Bernoulli measure is \emph{quite arbitrary}, and therefore
is not required to be computable at all, in particular.
Thus, the computability of the underlying Bernoulli measure is not essential
for a Martin-L\"of random sequence with respect to it to satisfy the law of large numbers.

In this regard, the following question may arise naturally.
\begin{quote}
\textbf{Question}:
What role does
the computability of the underlying Bernoulli measure
play in the law of large numbers for an algorithmically random infinite sequence?
\end{quote}
As an answer to this question,
in our subsequent work~\cite{T21arXiv}, we
showed that
the computability of the underlying Bernoulli measure
leads to an \emph{effectivization} of the law of large numbers.
Actually,
in Theorem~51 of that work, we
showed that
the law of large numbers can be effctivized
for an \emph{arbitrary} Martin-L\"of infinite sequence,
with respect to an \emph{arbitrary computable} Bernoulli measure
(see Theorem~\ref{ML-EffectiveLLN0} below in this paper).

This paper is a sequel to our work~\cite{T21arXiv}, in particular, a sequel to Section~9 of
the work~\cite{T21arXiv}.
In this paper, we show the following:
Frist,
in Theorem~\ref{EffectiveLLN0} below,
we show that
the law of large numbers can be effectivized
for an \emph{arbitrary Schnorr random} infinite sequence,
with respect to an \emph{arbitrary computable} Bernoulli measure.
Thus,
in this paper we generalize over the notion of Schnorr randomness
Theorem~51 of the work~\cite{T21arXiv},
which describes the
original
result regarding the effective convergence of the law of large numbers for
the notion of
Martin-L\"of randomness
in the work~\cite{T21arXiv}.

In this paper, we then investigate a ``converse'' of Theorem~\ref{EffectiveLLN0}.
Namely, we show
in Theorem~\ref{thm:main} below that
(i)
an \emph{absolute speed limit of convergence}
exists in the effectivization of the law of large numbers
which is provided by Theorem~\ref{EffectiveLLN0},
and
(ii)
this speed limit equals $2$, in the sense
stated in
Theorem~\ref{thm:main}.
The central limit theorem plays a crucial role in proving
the converse theorem, Theorem~\ref{thm:main}.

Theorem~\ref{EffectiveLLN0} and Theorem~\ref{thm:main} deal with
an effectivization of the law of large numbers,
and its absolute speed limit of convergence,
in terms of
\emph{relative frequency} of each symbol which occurs in
an arbitrary Schnorr random infinite sequence
with respect to an arbitrary computable Bernoulli measure.
By
Theorems~\ref{EffectiveLLN-RV}, \ref{thm:main-ML}, and \ref{thm:main-Schnorr} below,
we
provide
a similar effectivization of the law of large numbers,
and its absolute speed limit of convergence,
for a \emph{real random variable}
whose underlying process
is described as
an arbitrary Schnorr (or Martin-L\"of) random infinite sequence
with respect to an arbitrary computable Bernoulli measure.

Note that an infinite sequence is
Schnorr random
\emph{almost surely}, with respect to an \emph{arbitrary} Bernoulli measure, i.e.,
the set of all
Schnorr random
infinite sequeces with respect to an \emph{arbitrary} Bernoulli measure
has \emph{probability one}
with respect to that Bernoulli measure (see Theorem~\ref{Schnorr-P-AE} below).
The same holds for Martin-L\"of randomness
with respect to an arbitrary Bernoulli measure.
Thus,
we can prove that
the \emph{property}
on a Schnorr random infinite sequence
or a Martin-L\"of random infinite sequence,
which is
considered
in each of
Theorems~\ref{EffectiveLLN-RV}, \ref{thm:main-ML}, and \ref{thm:main-Schnorr},
holds \emph{almost surely} for any infinite sequence.
In the resulting statements proved in this manner,
which correspond to
Theorems~\ref{EffectiveLLN-RV}, \ref{thm:main-ML}, and \ref{thm:main-Schnorr},
we can eliminate the requirement of the computability of the underling Bernoulli measure,
by using in their proofs
the notions of Schnorr randomness or Martin-L\"of randomness
\emph{with oracle} which computes the underlying Bernoulli measure.
However, we can easily prove
far stronger
results
within the framework of the traditional probability theory,
instead of resorting to the use of algorithmic randomness like the above argument.
Thus,
within the traditional and genuine framework of probability theory,
in this paper
we provide an effectivization of \emph{almost sure convergence} in
the strong law of large numbers, and its absolute speed limit
of convergence,
in a \emph{still more general setting} regarding
a type of the underlying probability
space
and random variables on it
than that
assumed
in Theorems~\ref{EffectiveLLN-RV}, \ref{thm:main-ML}, and \ref{thm:main-Schnorr}.
These results
in probability theory
are given as
Theorems~\ref{th:SLLN-effective-convergence}, \ref{SpeedLimitTheorem-SLLN},
and \ref{thm:main-SLLN} below.
Note that, certainly,
the underlying probability space or random variables on it are not required to be computable at all in any sense
in Theorems~\ref{th:SLLN-effective-convergence}, \ref{SpeedLimitTheorem-SLLN}, and \ref{thm:main-SLLN}.

\subsection{Organization of the paper}

The paper is organized as follows.
We begin in Section~\ref{preliminaries} with some preliminaries to
algorithmic randomness and measure theory.
In Section~\ref{Section:Main-effectiveLLN},
we investigate an effectivization of the law of large numbers,
and its absolute speed limit of convergence,
in terms of
relative frequency of each symbol which occurs in
an arbitrary Schnorr random infinite sequence
with respect to an arbitrary computable Bernoulli measure.
Subsequently,
in Section~\ref{Section:EffectiveLLN=RV}
we investigate a similar effectivization of the law of large numbers,
and its absolute speed limit of convergence,
for a real random variable
whose underlying process
is described as
an arbitrary Schnorr random infinite sequence
with respect to an arbitrary computable Bernoulli measure,
based partially on the results of Section~\ref{Section:Main-effectiveLLN}.
In Section~\ref{Section:EffectiveSLLN},
within the framework of probability theory
we provide
an
effectivization of
almost sure convergence in
the strong law of large numbers, and its absolute speed limit
of convergence,
with respect to
a large class of probability spaces and
independent and identically distributed random variables on them,
which are not necessarily computable.
Note that Section~\ref{Section:EffectiveSLLN}
does not depend on
any results of Section~\ref{preliminaries}, Section~\ref{Section:Main-effectiveLLN}, or Section~\ref{Section:EffectiveLLN=RV}.
Thus, it can be read independently of
these preceding sections.

\section{Preliminaries}
\label{preliminaries}

\subsection{Basic notation and definitions}
\label{basic notation}

We start with some notation about numbers and strings which will be used in this paper.
$\#S$ is the cardinality of $S$ for any set $S$.
$\N=\left\{0,1,2,3,\dotsc\right\}$ is the set of \emph{natural numbers},
and $\N^+$ is the set of \emph{positive integers}.
$\Q$ is the set of \emph{rationals}, and $\R$ is the set of \emph{reals}.
For any $a\in\R$, as usual, $\lceil a\rceil$ denotes the smallest integer greater than or equal to $a$.
A real $a\in\R$ is called \emph{computable} if
there exists a total recursive function $f\colon\N\to\Q$ such that
$\abs{a-f(k)} < 2^{-k}$ for all $k\in\N$.

An \emph{alphabet} is
a non-empty
finite set.
Let $\Omega$ be an arbitrary alphabet throughout the rest of this
section.
A \emph{finite string over $\Omega$} is a finite sequence of elements from the alphabet $\Omega$.
We use $\Omega^*$ to denote the set of all finite strings over $\Omega$,
which contains the \emph{empty string} denoted by $\lambda$.
We use $\Omega^+$ to denote the set $\Omega^*\setminus\{\lambda\}$.
For any $\sigma\in\Omega^*$, $\abs{\sigma}$ is the \emph{length} of $\sigma$.
Therefore $\abs{\lambda}=0$.
For any $\sigma\in\Omega^+$ and $k\in\N^+$ with $k\le\abs{\sigma}$,
we use $\sigma(k)$ to denote the $k$th element in $\sigma$.
Therefore, we have $\sigma=\sigma(1)\sigma(2)\dots\sigma(\abs{\sigma})$
for every $\sigma\in\Omega^+$.
For any $n\in\N$, we use $\Omega^n$ to denote the set~$\{\,x\mid x\in\Omega^*\;\&\;\abs{x}=n\}$.
A subset $S$ of $\Omega^*$ is called
\emph{prefix-free}
if no string in $S$ is a prefix of another string in $S$.

An \emph{infinite sequence over $\Omega$} is an infinite sequence of elements from the alphabet $\Omega$,
where the sequence is infinite to the right but finite to the left.
We use $\Omega^\infty$ to denote the set of all infinite sequences over $\Omega$.

Let $\alpha\in\Omega^\infty$.
For any $n\in\N$
we denote by $\rest{\alpha}{n}\in\Omega^*$ the first $n$ elements
in the infinite sequence $\alpha$,
and
for any $n\in\N^+$ we denote
by $\alpha(n)$ the $n$th element in $\alpha$.
Thus, for example, $\rest{\alpha}{4}=\alpha(1)\alpha(2)\alpha(3)\alpha(4)$, and $\rest{\alpha}{0}=\lambda$.

For any $S\subset\Omega^*$, the set
$\{\alpha\in\Omega^\infty\mid\exists\,n\in\N\;\rest{\alpha}{n}\in S\}$
is denoted by $\osg{S}$.
For any $\sigma\in\Omega^*$, we denote by $\osg{\sigma}$ the set $\osg{\{\sigma\}}$, i.e.,
the set of all infinite sequences over $\Omega$ extending $\sigma$.
Therefore $\osg{\lambda}=\Omega^\infty$.
A subset $\mathcal{R}$ of $\Omega^\infty$ is \emph{open} if
$\mathcal{R}=\osg{S}$ for some $S\subset\Omega^*$.
A class $\mathcal{F}$ of subsets of $\Omega^\infty$ is called
a \emph{$\sigma$-field in $\Omega^\infty$}
if  $\mathcal{F}$ includes $\Omega^\infty$, is closed under complements,
and is closed under the formation of countable unions.
The \emph{Borel class} $\mathcal{B}_{\Omega}$ is the $\sigma$-field \emph{generated by}
all open sets on $\Omega^\infty$.
Namely, the Borel class $\mathcal{B}_{\Omega}$ is defined
as the intersection of all the $\sigma$-fields on $\Omega^\infty$ containing
all open sets on $\Omega^\infty$.
The pair $(\Omega^\infty,\mathcal{B}_{\Omega})$ forms a \emph{measurable space}.
See Billingsley~\cite{B95}, Chung~\cite{Chung01},
Nies~\cite[Sections 1.8 and 1.9]{N09},
Durrett~\cite{Durrett19}, and
Klenke~\cite{Klenke20} for measure theory and probability theory.

We write ``r.e.'' instead of ``recursively enumerable.''

\subsection{Finite probability spaces and Bernoulli measure}
\label{FPS-BM}

Let $\Omega$ be an alphabet.
A \emph{finite probability space on $\Omega$} is a function $P\colon\Omega\to\R$
such that
(i) $P(a)\ge 0$ for every $a\in \Omega$, and
(ii) $\sum_{a\in \Omega}P(a)=1$.
The set of all finite probability spaces on $\Omega$ is denoted by $\PS(\Omega)$.
A finite probability space $P\in\PS(\Omega)$ is called \emph{computable} if
$P(a)$ is a computable real for every $a\in\Omega$.

Let $P\in\PS(\Omega)$.
For each $\sigma\in\Omega^*$, we use $P(\sigma)$ to denote
$P(\sigma_1)P(\sigma_2)\dots P(\sigma_n)$
where $\sigma=\sigma_1\sigma_2\dots\sigma_n$ with $\sigma_i\in\Omega$.
Therefore $P(\lambda)=1$, in particular.
For each subset $S$ of $\Omega^*$, we use $P(S)$ to denote
\[
  \sum_{\sigma\in S}P(\sigma).
\]
Therefore $P(\emptyset)=0$, in particular.
The \emph{Bernoulli measure} $\lambda_{P}$ is a probability measure on
the measurable space $(\Omega^\infty,\mathcal{B}_{\Omega})$ such that
\begin{equation}\label{pBm}
  \Bm{P}{\osg{\sigma}}=P(\sigma)
\end{equation}
for every $\sigma\in \Omega^*$.
The Bernoulli measure $\lambda_{P}$
is called \emph{computable} if
there exists a total recursive function $f\colon\Omega^*\times\N\to\Q$ such that
$\abs{\Bm{P}{\osg{\sigma}}-f(\sigma,k)} < 2^{-k}$
for all $\sigma\in\Omega^*$ and $k\in\N$.
It is easy to see that $P$ is computable
if and only if $\lambda_{P}$ is computable.

\subsection{\boldmath Martin-L\"of $P$-randomness and Schnorr $P$-randomness}
\label{ML-Schnorr-P-R}

Martin-L\"of randomness and Schnorr randomness are
two of major randomness notions in algorithmic randomness.
In the first half of this paper, we investigate
an effectivization of the law of large numbers, regarding
Martin-L\"of randomness and Schnorr randomness with respect to a Bernoulli measure.

Martin-L\"of randomness with respect to
a
Bernoulli measure,
which is called \emph{Martin-L\"of $P$-randomness} in this paper, is defined as follows:

\begin{definition}[Martin-L\"of $P$-randomness, Martin-L\"{o}f \cite{M66}]
\label{ML_P-randomness}
Let $\Omega$ be an alphabet, and let $P\in\PS(\Omega)$.
\begin{enumerate}
  \item A subset $\mathcal{C}$ of $\N^+\times \Omega^*$ is called a \emph{Martin-L\"{o}f $P$-test} if
    $\mathcal{C}$ is an r.e.~set
    and
    for every $n\in\N^+$ it holds that
    \[\Bm{P}{\osg{\mathcal{C}_n}}<2^{-n},\]
    where $\mathcal{C}_n$ denotes the set
    $\left\{\,
      \sigma\mid (n,\sigma)\in\mathcal{C}
    \,\right\}$.
  \item For any $\alpha\in\Omega^\infty$ and Martin-L\"{o}f $P$-test $\mathcal{C}$,
    we say that $\alpha$ \emph{passes} $\mathcal{C}$ if there exists $n\in\N^+$ such that
    $\alpha\notin\osg{\mathcal{C}_n}$.
  \item For any $\alpha\in\Omega^\infty$, we say that $\alpha$ is \emph{Martin-L\"{o}f $P$-random} if
    for every Martin-L\"{o}f $P$-test $\mathcal{C}$
    it holds that $\alpha$ passes $\mathcal{C}$.\qed
\end{enumerate}
\end{definition}

In the case where $\Omega=\{0,1\}$ and $P$ satisfies that $P(0)=P(1)=1/2$,
the Martin-L\"of $P$-randomness results in
the usual Martin-L\"of randomness
for an infinite binary sequence with respect to Lebesgue measure.
The notion of Martin-L\"of $P$-randomness was
introduced by Martin-L\"{o}f~\cite{M66}, as well as the notion of
the usual
Martin-L\"of randomness.

In the field of algorithmic randomness, the notion of Schnorr randomness~\cite{Sch71} is
one of major randomness notions
strictly
weaker than
the notion of Martin-L\"of randomness~\cite{M66},
with respect to Lebesgue measure
(see e.g.~Nies~\cite{N09} and Downey and Hirschfeldt~\cite{DH10} for the detail of
the relation among these randomness notions).
The notion of Schnorr randomness
is naturally generalized over
a
Bernoulli measure
as follows.

\begin{definition}[Schnorr $P$-randomness, Schnorr~\cite{Sch71}]
\label{Schnorr_P-randomness}
Let $\Omega$ be an alphabet, and let $P\in\PS(\Omega)$.
\begin{enumerate}
  \item A subset $\mathcal{C}$ of $\N^+\times \Omega^*$ is called
    a \emph{Schnorr $P$-test} if $\mathcal{C}$ is a Martin-L\"{o}f $P$-test and moreover
    $\Bm{P}{\osg{\mathcal{C}_n}}$ is uniformly compuatble in $n$, i.e.,
    there exists a total recursive function $f\colon\N^+\times\N\to\Q$ such that
    \[
      \abs{\Bm{P}{\osg{\mathcal{C}_n}}-f(n,k)} < 2^{-k}
    \]
    for all $n\in\N^+$ and $k\in\N$,
    where $\mathcal{C}_n$ denotes the set
    $\left\{\,
      \sigma\mid (n,\sigma)\in\mathcal{C}
    \,\right\}$.
  \item For any $\alpha\in\Omega^\infty$, we say that $\alpha$ is
    \emph{Schnorr $P$-random} if for every Schnorr $P$-test $\mathcal{C}$
    it holds that $\alpha$ passes $\mathcal{C}$.\qed
\end{enumerate}
\end{definition}

In the case where $\Omega=\{0,1\}$ and $P$ satisfies that $P(0)=P(1)=1/2$,
the Schnorr $P$-randomness
results in
the usual Schnorr randomness
for an infinite binary sequence with respect to Lebesgue measure.
The notion of Schnorr $P$-randomness was
introduced by Schnorr~\cite{Sch71}, as well as the notion of
the usual Schnorr randomness.

In Definition~\ref{ML_P-randomness}, we do not require that
each
set $\mathcal{C}_n$ is prefix-free in the definition of a Martin-L\"{o}f $P$-test $\mathcal{C}$.
However,
based on Lemma~\ref{impose-prefix-freeness} below,
we can freely impose this requirement
``$\mathcal{C}_n$ is prefix-free for all $n\in\N^+$''
on an arbitrary Martin-L\"{o}f $P$-test $\mathcal{C}$,
while keeping the set of all infinite sequences over $\Omega$
which
pass
$\mathcal{C}$ the same:

\begin{lemma}\label{impose-prefix-freeness}
Let $\Omega$ be an alphabet.
For every r.e.~subset $\mathcal{C}$ of $\N^+\times \Omega^*$
there exists an r.e.~subset $\mathcal{D}$ of $\N^+\times \Omega^*$ such that
$\mathcal{D}_n$ is a prefix-free subset of $\Omega^*$ and
$\osg{\mathcal{C}_n}=\osg{\mathcal{D}_n}$ for every $n\in\N^+$,
where $\mathcal{C}_n$ and $\mathcal{D}_n$ denote
the sets
$\left\{\,
    \sigma\mid (n,\sigma)\in\mathcal{C}
\,\right\}$
and
$\left\{\,
    \sigma\mid (n,\sigma)\in\mathcal{D}
\,\right\}$,
respectively.
\qed
\end{lemma}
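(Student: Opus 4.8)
The plan is to fix the alphabet $\Omega$ together with an r.e.\ set $\mathcal{C}\subseteq\N^+\times\Omega^*$, to enumerate $\mathcal{C}$ effectively as a sequence of pairs $(n_0,\sigma_0),(n_1,\sigma_1),\dots$, and to build $\mathcal{D}$ in stages, maintaining throughout the construction the invariant that, for every $n\in\N^+$, the current section $\mathcal{D}_n$ is a finite prefix-free subset of $\Omega^*$ whose generated open set $\osg{\mathcal{D}_n}$ equals the open set generated by the strings already enumerated into $\mathcal{C}_n$. Because the indices $n$ never interact, it suffices to explain how a single newly enumerated pair $(n,\sigma)$ is processed; the whole procedure is then uniform in $n$, so the set $\mathcal{D}$ produced in the limit is again r.e. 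Before giving the construction I would point out why the naive definition is inadequate: taking $\mathcal{D}_n$ to be the set of prefix-minimal elements of $\mathcal{C}_n$ does preserve $\osg{\cdot}$ and is prefix-free, but membership in it asks that \emph{no} proper prefix of $\sigma$ ever enters $\mathcal{C}_n$, which is a co-r.e.\ condition; hence $\mathcal{D}$ need not be r.e., and a stage-by-stage construction is forced.

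The key step is the processing of a newly enumerated pair $(n,\sigma)$. Write $D$ for the current finite prefix-free set $\mathcal{D}_n$. If some $\tau\in D$ is a prefix of $\sigma$, then $\osg{\sigma}\subseteq\osg{D}$ already, and I add nothing. Otherwise let $T=\{\tau\in D \mid \sigma \text{ is a proper prefix of } \tau\}$ be the cylinders of $\osg{\sigma}$ already committed to $D$, put $m=\max(\{\abs{\sigma}\}\cup\{\abs{\tau}\mid\tau\in T\})$, and add to $\mathcal{D}$ the pairs $\{n\}\times E_\sigma$, where
\[
  E_\sigma=\bigl\{\,\rho\in\Omega^m \;\big|\; \sigma \text{ is a prefix of } \rho \text{ and no } \tau\in T \text{ is a prefix of } \rho \,\bigr\}.
\]
Each $\rho\in E_\sigma$ has length $m\ge\abs{\tau}$, so forbidding $\tau$ from being a prefix of $\rho$ makes $\rho$ incomparable with every $\tau\in T$; together with $\abs{\rho}=m$ this yields that $D\cup E_\sigma$ is again prefix-free and that $\osg{E_\sigma}=\osg{\sigma}\setminus\osg{D}$. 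Consequently, after processing, $\osg{\mathcal{D}_n}=\osg{D}\cup\osg{\sigma}$, which restores the invariant. Note that all of this is finite combinatorics on strings --- $\osg{D}$ is a finite union of cylinders, so its complement inside the cylinder $\osg{\sigma}$ is again a finite union of cylinders --- and requires no compactness or limiting argument.

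It then remains to assemble the verification. The construction is plainly effective, so $\mathcal{D}$ is r.e.; the invariant gives that each $\mathcal{D}_n$ is prefix-free. For the equality of open sets, $\osg{\mathcal{D}_n}\subseteq\osg{\mathcal{C}_n}$ holds because every string ever placed into $\mathcal{D}_n$ extends some $\sigma$ with $(n,\sigma)\in\mathcal{C}$, whence $\osg{\rho}\subseteq\osg{\sigma}\subseteq\osg{\mathcal{C}_n}$; and $\osg{\mathcal{C}_n}\subseteq\osg{\mathcal{D}_n}$ holds because, for each $\sigma$ with $(n,\sigma)\in\mathcal{C}$, the stage at which $\sigma$ is processed guarantees $\osg{\sigma}\subseteq\osg{\mathcal{D}_n}$, and $\osg{\mathcal{C}_n}=\bigcup_{(n,\sigma)\in\mathcal{C}}\osg{\sigma}$. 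The hard part, and the only place the argument is delicate, is the processing step: one must cover the newly arrived cylinder $\osg{\sigma}$ without disturbing prefix-freeness even when $\sigma$ is \emph{shorter} than strings already committed to $\mathcal{D}_n$ (the case $T\neq\emptyset$); this is exactly what the complementary set $E_\sigma$ accomplishes, by filling $\osg{\sigma}$ around the already-present finer cylinders rather than inserting $\sigma$ itself.
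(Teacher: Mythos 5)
Your construction is correct: processing each newly enumerated pair $(n,\sigma)$ by adding the finite antichain $E_\sigma$ of length-$m$ extensions of $\sigma$ that avoid the already-committed finer cylinders does cover exactly $\osg{\sigma}\setminus\osg{D}$ while preserving prefix-freeness, the increasing union of prefix-free stages stays prefix-free, and the whole procedure is uniformly effective, so $\mathcal{D}$ is r.e.\ with $\osg{\mathcal{D}_n}=\osg{\mathcal{C}_n}$ for all $n$. The paper itself gives no inline proof but defers to Tadaki~\cite[Lemma~9]{T21arXiv}, and your stage-by-stage covering argument (including the correct observation that the naive ``prefix-minimal elements'' definition is only co-r.e.) is essentially that standard argument, so there is nothing to correct.
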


For an explicit proof of Lemma~\ref{impose-prefix-freeness},
see for instance Tadaki~\cite[Lemma~9]{T21arXiv}.

Since there are only countably infinitely many algorithms,
it is easy to show the following theorem.

\begin{theorem}
\label{Schnorr-P-AE}
Let $\Omega$ be an alphabet, and let $P\in\PS(\Omega)$.
Then (i)~$\mathrm{ML}_P\subset \mathrm{S}_P$,
(ii)~$\mathrm{ML}_P,\mathrm{S}_P\in\mathcal{B}_{\Omega}$,
and (iii)~$\Bm{P}{\mathrm{ML}_P}=\Bm{P}{\mathrm{S}_P}=1$,
where $\mathrm{ML}_P$ denotes the set of all Martin-L\"of $P$-random infinite
sequences over $\Omega$,
and $\mathrm{S}_P$ denotes the set of all Schnorr $P$-random infinite
sequences over $\Omega$.
\qed
\end{theorem}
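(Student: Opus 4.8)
The plan is to prove the three assertions in turn, using throughout the fact that there are only countably many Martin-L\"of (resp.\ Schnorr) $P$-tests, which is the real content of the theorem.

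For (i), I would simply compare the two definitions. By Definition~\ref{Schnorr_P-randomness}, a Schnorr $P$-test is by fiat a Martin-L\"of $P$-test carrying the extra requirement that $\Bm{P}{\osg{\mathcal{C}_n}}$ be uniformly computable in $n$. Hence every Schnorr $P$-test is a Martin-L\"of $P$-test. If $\alpha\in\mathrm{ML}_P$, then $\alpha$ passes every Martin-L\"of $P$-test and therefore every Schnorr $P$-test, so $\alpha\in\mathrm{S}_P$; this gives $\mathrm{ML}_P\subset\mathrm{S}_P$.

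For (ii), I would write each set as a countable intersection of Borel sets. Fix a Martin-L\"of $P$-test $\mathcal{C}$. By Definition~\ref{ML_P-randomness} the set of $\alpha$ passing $\mathcal{C}$ is $\bigcup_{n\in\N^+}\left(\Omega^\infty\setminus\osg{\mathcal{C}_n}\right)$; since each $\osg{\mathcal{C}_n}$ is open and hence Borel, so is this passing set. Now $\mathrm{ML}_P$ is the intersection of the passing sets over all Martin-L\"of $P$-tests $\mathcal{C}$, and this index set is countable because each test is an r.e.\ subset of $\N^+\times\Omega^*$ and there are only countably many algorithms enumerating such subsets. A countable intersection of Borel sets being Borel, we obtain $\mathrm{ML}_P\in\mathcal{B}_\Omega$; the identical argument over the countable family of Schnorr $P$-tests yields $\mathrm{S}_P\in\mathcal{B}_\Omega$.

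For (iii), I would first handle $\mathrm{ML}_P$ and then deduce the Schnorr case by monotonicity. For a fixed Martin-L\"of $P$-test $\mathcal{C}$, the set of $\alpha$ \emph{failing} $\mathcal{C}$ is $\bigcap_{n\in\N^+}\osg{\mathcal{C}_n}$, which is contained in $\osg{\mathcal{C}_m}$ for every $m$, so its $\lambda_P$-measure is at most $\inf_{m\in\N^+}\Bm{P}{\osg{\mathcal{C}_m}}\le\inf_{m\in\N^+}2^{-m}=0$; thus every failure set is $\lambda_P$-null. Since $\Omega^\infty\setminus\mathrm{ML}_P$ is the union of these failure sets over the countable family of Martin-L\"of $P$-tests, it is a countable union of null sets, hence null, so $\Bm{P}{\mathrm{ML}_P}=1$. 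Combining this with (i) and the measurability of $\mathrm{S}_P$ from (ii), monotonicity of $\lambda_P$ gives $1=\Bm{P}{\mathrm{ML}_P}\le\Bm{P}{\mathrm{S}_P}\le 1$, whence $\Bm{P}{\mathrm{S}_P}=1$. The only genuinely delicate point is the countability used in (ii) and (iii): one must note that the defining conditions on a test merely carve out a subfamily of the countably many r.e.\ subsets of $\N^+\times\Omega^*$, so that both the Borel intersection and the null-set union are indexed by a countable set. Granting this, the remaining manipulations are routine applications of the $\sigma$-field axioms and of monotonicity and countable subadditivity of $\lambda_P$.
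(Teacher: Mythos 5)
Your proof is correct and follows precisely the route the paper intends: the paper states this theorem without a detailed proof, remarking only that it is easy to show ``since there are only countably infinitely many algorithms,'' and your argument fills in exactly those details --- the countability of the family of tests as a subfamily of the countably many r.e.\ subsets of $\N^+\times\Omega^*$, the Borel structure of the passing and failure sets, the null-union argument for (iii), and the monotonicity step deducing $\Bm{P}{\mathrm{S}_P}=1$ from $\mathrm{ML}_P\subset\mathrm{S}_P$. Nothing further is needed.
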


Finally, note that, in Definitions~\ref{ML_P-randomness} and \ref{Schnorr_P-randomness},
and in Theorem~\ref{Schnorr-P-AE},
the finite probability space $P$ is quite arbitrary and thus
$P$ is not required to be computable,
in particular.

\section{\boldmath Effectivization of the law of large numbers for a Schnorr $P$ random sequence, and its speed limit of convergence}
\label{Section:Main-effectiveLLN}

In this section,
we investigate an effectivization of the law of large numbers,
and its absolute speed limit of convergence,
in terms of relative frequency of each symbol which occurs in
an arbitrary Schnorr $P$-random infinite sequence
where $P$ is
an arbitrary
computable finite probability space.

We first review the result of Section~9 of Tadaki~\cite{T21arXiv}.
Let $\Omega$ be an alphabet, and let $P\in\PS(\Omega)$.
For any $\alpha\in\Omega^\infty$,
we say that the law of large numbers holds for the infinite sequence $\alpha$
with respect to the Bernoulli measure $\lambda_{P}$ if
for every real $\varepsilon>0$
there exists
$n_0\in\N^+$
such that
for every $n\in\N^+$ if
$n\ge n_0$
then
for every $a\in\Omega$ it holds that
\begin{equation}\label{LLN-classical}
  \abs{\frac{N_a(\rest{\alpha}{n})}{n}-P(a)}<\varepsilon.
\end{equation}
By proving Theorem~\ref{ML-EffectiveLLN0} below, Tadaki~\cite{T21arXiv} showed that
for every Martin-L\"of $P$-random infinite sequence $\alpha$,
there exists an \emph{effective} procedure which computes the positive integer $n_0$
in the above statement~\eqref{LLN-classical} for any given rational $\varepsilon>0$,
provided that
the underlying Bernoulli measure~$\lambda_{P}$ is \emph{computable}.
This type of convergence is an \emph{effective convergence} in the sense of Pour-El and Richards~\cite{PR89}.

\begin{theorem}
[Tadaki~\cite{T21arXiv}]
\label{ML-EffectiveLLN0}
Let $\Omega$ be an alphabet, and let $P\in\PS(\Omega)$.
Suppose that $P$ is computable.
Let $\alpha\in\Omega^\infty$. Suppose that $\alpha$ is Martin-L\"of $P$-random.
Let $\varepsilon$ be an arbitrary positive real.
Then
there exists $M\in\N^+$ such that for every $n\ge M$ and every $k\in\N^+$ if $k\ge n^{2+\varepsilon}$ then
for every $a\in\Omega$ it holds that
\begin{equation*}\label{ML-EffectiveLLN0-eq}
  \abs{\frac{N_a(\rest{\alpha}{k})}{k}-P(a)}<\frac{1}{n},
\end{equation*}
where $N_a(\sigma)$ denotes the number of the occurrences of $a$ in $\sigma$
for every $a\in\Omega$ and $\sigma\in\Omega^*$.
\qed
\end{theorem}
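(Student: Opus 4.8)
The plan is to exhibit a Martin-L\"of $P$-test that captures every sequence violating the conclusion, so that $P$-randomness forces the conclusion. First I would observe that it suffices to treat the case where $\varepsilon$ is a positive \emph{rational}: if $0<\varepsilon'<\varepsilon$ with $\varepsilon'$ rational, then $n^{2+\varepsilon'}\le n^{2+\varepsilon}$ for all $n\in\N^+$, so the statement for $\varepsilon'$ immediately yields the one for $\varepsilon$ (any $k\ge n^{2+\varepsilon}$ also satisfies $k\ge n^{2+\varepsilon'}$). Fixing such a rational $\varepsilon$, for each $n$ I would form the set of ``bad'' strings
\[
  S_n=\Bigl\{\sigma\in\Omega^*\;\Big|\;\abs{\sigma}\ge\lceil n^{2+\varepsilon}\rceil\ \text{and}\ \bigl|N_a(\sigma)/\abs{\sigma}-P(a)\bigr|>\tfrac1n-\tfrac1{n^2}\ \text{for some }a\in\Omega\Bigr\},
\]
using the \emph{strict}, slightly relaxed rational threshold $\tfrac1n-\tfrac1{n^2}$ so that $S_n$ is r.e.\ uniformly in $n$; here the computability of $P$ is used, since $P(a)$ must be approximated to semidecide the strict inequality, and rationality of $\varepsilon$ makes $\lceil n^{2+\varepsilon}\rceil$ computable. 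The open set $\osg{S_n}$ then contains every $\alpha$ for which $\bigl|N_a(\rest{\alpha}{k})/k-P(a)\bigr|\ge\tfrac1n$ holds for some $a$ and some $k\ge n^{2+\varepsilon}$.

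Second, I would estimate $\Bm{P}{\osg{S_n}}$. Under $\lambda_P$ the coordinates are i.i.d.\ with $\Bm{P}{\{\alpha\mid\alpha(i)=a\}}=P(a)$, so $N_a(\rest{\alpha}{k})$ is binomial with parameters $k$ and $P(a)$; a two-sided Chernoff/Hoeffding bound gives $\Bm{P}{\bigl|N_a(\rest{\alpha}{k})/k-P(a)\bigr|>\delta}\le 2e^{-2k\delta^2}$, valid for every $P(a)\in[0,1]$ including the degenerate values $0,1$. Taking $\delta=\tfrac1n-\tfrac1{n^2}\ge\tfrac1{2n}$ (for $n\ge2$) and summing the union bound over $a\in\Omega$ and over $k\ge\lceil n^{2+\varepsilon}\rceil$, the core computation is the geometric tail $\sum_{k\ge\lceil n^{2+\varepsilon}\rceil}e^{-k/(2n^2)}$, which is $O(n^2e^{-n^\varepsilon/2})$, whence $\Bm{P}{\osg{S_n}}\le 8(\#\Omega)\,n^2e^{-n^\varepsilon/2}$. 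This is precisely where the exponent $2+\varepsilon$ enters: the ``$2$'' matches the $k\delta^2=k/n^2$ scaling of the deviation bound, and the surplus $\varepsilon$ forces $k/n^2\ge n^\varepsilon\to\infty$, making the bound summable.

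Third, since this upper bound is summable with a computable majorant, I would compute a total recursive $\ell\colon\N^+\to\N^+$ with $\sum_{n\ge\ell(m)}8(\#\Omega)\,n^2e^{-n^\varepsilon/2}<2^{-m}$ and set $\mathcal{C}=\{(m,\sigma)\mid\sigma\in\bigcup_{n\ge\ell(m)}S_n\}$. Then $\mathcal{C}$ is r.e.\ (as $\ell$ is recursive and the $S_n$ are uniformly r.e.), and $\Bm{P}{\osg{\mathcal{C}_m}}\le\sum_{n\ge\ell(m)}\Bm{P}{\osg{S_n}}<2^{-m}$, so $\mathcal{C}$ is a Martin-L\"of $P$-test. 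Because $\alpha$ is Martin-L\"of $P$-random it passes $\mathcal{C}$, i.e.\ there is an $m$ with $\alpha\notin\osg{\mathcal{C}_m}$, meaning no prefix of $\alpha$ lies in any $S_n$ with $n\ge\ell(m)$. Putting $M=\ell(m)$ gives, for every $n\ge M$, every $k\ge\lceil n^{2+\varepsilon}\rceil$, and every $a\in\Omega$, the bound $\bigl|N_a(\rest{\alpha}{k})/k-P(a)\bigr|\le\tfrac1n-\tfrac1{n^2}<\tfrac1n$, which is exactly the desired conclusion.

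I expect the main obstacle to be the quantitative measure estimate rather than the test bookkeeping: one must commit to a deviation inequality that is uniform over arbitrary computable $P(a)\in[0,1]$, and then carry out the tail summation over $k$ carefully enough to extract both the clean $n^2e^{-n^\varepsilon/2}$ decay \emph{and} an explicit computable majorant, since it is this majorant that renders $\ell$ recursive and hence $\mathcal{C}$ a genuine r.e.\ test. The remaining ingredients---the reduction to rational $\varepsilon$, the uniform r.e.-ness of the $S_n$, and the extraction of $M$ from the passing level $m$---are routine once the estimate is in hand.
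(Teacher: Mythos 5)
Your proposal is correct, and it reaches the theorem by a genuinely leaner route than the paper's own argument (the paper proves the Schnorr analogue, Theorem~\ref{EffectiveLLN0}, as an elaboration of the Martin-L\"of proof of Theorem~51 in \cite{T21arXiv}, so that is the natural comparison). The paper first disposes of the degenerate cases $P(a)=0$ and $P(a)=1$ by separate results (Theorems~\ref{zero_probability} and \ref{one_probability}), then reduces to the binary alphabet: it maps $\alpha$ to a binary sequence $\beta$ and invokes the contraction theorem (Theorem~\ref{contraction}) to transfer randomness, because the Chernoff bound it quotes (Theorem~\ref{Chernoff_bound}) requires $0<p<1$ and deviation $\varepsilon\le\min\{p,1-p\}$ --- whence also the threshold $n_0$ with $2/n_0\le\min\{Q(0),Q(1)\}$ --- and finally converts the achieved bound $\le 2/n$ into the strict $<1/n$ by an index-rescaling trick (choosing $M$ with $M^\delta\ge 4^{2+\delta}$ and evaluating the test guarantee at index $4n$). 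You avoid all three devices: your two-sided Hoeffding bound $2e^{-2k\delta^2}$ is uniform over $p\in[0,1]$, so no case analysis, no contraction lemma, and no $n_0$ are needed; your test lives directly in $\Omega^*$ and traps $\alpha$ itself rather than a derived binary sequence; and your deflated strict rational threshold $\tfrac1n-\tfrac1{n^2}$ simultaneously buys semidecidability (strict inequality against a computable real) and the strict final bound $<\tfrac1n$ at the same index, replacing the paper's $4n$ rescaling. The quantitative core is identical in both arguments --- the summable double tail $\sum_n\sum_{k\ge n^{2+\varepsilon}}e^{-ck/n^2}=O(n^2e^{-cn^\varepsilon})$ together with a recursive tail-index function (the paper's Lemma~\ref{EffectiveLLN-lemma}, your $\ell$) --- and your geometric-series estimate via $1-e^{-x}\ge x/2$ yields exactly the computable majorant that makes $\ell$ recursive; just arrange $\ell(m)\ge 2$ so that $\delta_n=\tfrac1n-\tfrac1{n^2}\ge\tfrac1{2n}$ is available (at $n=1$ your threshold degenerates to $0$), which costs nothing. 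Note finally that since the target here is Martin-L\"of randomness only, you correctly need just r.e.-ness of the test, not the uniform computability of $\Bm{P}{\osg{\mathcal{T}_m}}$ that the paper's Schnorr elaboration must additionally establish.
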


In this section, we prove Theorem~\ref{EffectiveLLN0} below,
which generalizes Theorem~\ref{ML-EffectiveLLN0} over the notion of Schnorr $P$-randomness.
The poof of Theorem~\ref{EffectiveLLN0} is given in
Section~\ref{Subsection:Proof-of-TheoremEffectiveLLN0} below.

\begin{theorem}
[Effectivization of the law of large numbers for a Schnorr $P$-random sequence I]
\label{EffectiveLLN0}
Let $\Omega$ be an alphabet, and let $P\in\PS(\Omega)$.
Suppose that $P$ is computable.
Let $\alpha\in\Omega^\infty$. Suppose that $\alpha$ is Schnorr $P$-random.
Let $\varepsilon$ be an arbitrary positive real.
Then
there exists $M\in\N^+$ such that for every $n\ge M$ and every $k\in\N^+$ if $k\ge n^{2+\varepsilon}$ then
for every $a\in\Omega$ it holds that
\begin{equation}\label{EffectiveLLN0-eq}
  \abs{\frac{N_a(\rest{\alpha}{k})}{k}-P(a)}<\frac{1}{n},
\end{equation}
where $N_a(\sigma)$ denotes the number of the occurrences of $a$ in $\sigma$
for every $a\in\Omega$ and $\sigma\in\Omega^*$.
\qed
\end{theorem}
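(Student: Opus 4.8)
The plan is to argue by contraposition: assuming that the conclusion fails for $\alpha$, I will construct a Schnorr $P$-test that $\alpha$ does not pass, contradicting the Schnorr $P$-randomness of $\alpha$. The negation of the conclusion asserts exactly that $\alpha$ lies in the tail event
\[
B = \bigcap_{M\in\N^+}\ \bigcup_{n\ge M} U_n, \qquad U_n = \bigcup_{k\ge n^{2+\varepsilon}}\ \bigcup_{a\in\Omega} D_{n,k,a},
\]
where $D_{n,k,a}=\{\beta\in\Omega^\infty : \abs{N_a(\rest{\beta}{k})/k - P(a)}\ge 1/n\}$. Since membership in $D_{n,k,a}$ depends only on $\rest{\beta}{k}$, each $D_{n,k,a}$ is a finite union of cylinders $\osg{\sigma}$ with $\abs{\sigma}=k$, hence clopen; thus each $U_n$ is open and $B\in\mathcal{B}_{\Omega}$. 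It therefore suffices to produce a Schnorr $P$-test $\mathcal{C}$ with $B\subseteq\osg{\mathcal{C}_m}$ for every $m$.

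The probabilistic core is a concentration bound. Since under $\lambda_{P}$ the count $N_a(\rest{\beta}{k})$ is binomially distributed with parameters $k$ and $P(a)$, the Chernoff--Hoeffding inequality yields $\Bm{P}{D_{n,k,a}}\le 2e^{-2k/n^2}$. I emphasize that an \emph{exponential} bound is essential here: Chebyshev's inequality only gives $\Bm{P}{D_{n,k,a}}\le n^2/(4k)$, whose sum over $k\ge n^{2+\varepsilon}$ diverges, whereas the exponential bound, after summing the resulting geometric series over $k\ge n^{2+\varepsilon}$ and over the $\#\Omega$ symbols, gives $\Bm{P}{U_n}\le C\,\#\Omega\,n^2 e^{-2n^{\varepsilon}}$ for a suitable constant $C$. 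Consequently $\sum_{n}\Bm{P}{U_n}$ converges, and it converges \emph{effectively}, since each summand is dominated by an explicitly computable term. Hence for each $m\in\N^+$ I can compute $M_m\in\N^+$ with $\sum_{n\ge M_m}\Bm{P}{U_n}<2^{-m}$, and I set $\mathcal{C}_m$ to be the set of all $\sigma\in\Omega^*$ that witness membership in some $D_{n,k,a}$ with $n\ge M_m$, $k\ge n^{2+\varepsilon}$, and $a\in\Omega$, so that $\osg{\mathcal{C}_m}=\bigcup_{n\ge M_m}U_n$. Because $P$ is computable, $\mathcal{C}=\{(m,\sigma):\sigma\in\mathcal{C}_m\}$ is r.e.; the boundary case of the inequality $\ge 1/n$ is handled routinely by replacing it with a decidable strict inequality against a sufficiently fine rational approximation of $P(a)$, which enlarges each $D_{n,k,a}$ only negligibly and preserves the measure bound. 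The union bound gives $\Bm{P}{\osg{\mathcal{C}_m}}<2^{-m}$, and $B\subseteq\bigcup_{n\ge M_m}U_n=\osg{\mathcal{C}_m}$ for every $m$, so every sequence in $B$ fails $\mathcal{C}$.

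The step I expect to be the main obstacle is verifying the defining \emph{extra} condition of a Schnorr $P$-test, namely that $\Bm{P}{\osg{\mathcal{C}_m}}$ is uniformly computable in $m$; for a mere Martin-L\"of $P$-test, as in Theorem~\ref{ML-EffectiveLLN0}, the bound $\Bm{P}{\osg{\mathcal{C}_m}}<2^{-m}$ already suffices and no such computation is needed. To obtain uniform computability I would approximate $\Bm{P}{\osg{\mathcal{C}_m}}$ from below by the measures of finite subunions: enumerating the triples $(n,k,a)$ with $n\ge M_m$ and $k\ge n^{2+\varepsilon}$, the measure of any finite union of the corresponding clopen sets is computable from $P$ (refine to cylinders of a common length and sum the computable values $P(\tau)$). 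The error of such a finite approximation is at most the tail $\sum 2e^{-2k/n^2}$ over the omitted triples, which by the effective convergence established above can be driven below any prescribed $2^{-j}$ by a computable choice of the finite set, uniformly in $m$ since $M_m$ is computable in $m$. This exhibits $\Bm{P}{\osg{\mathcal{C}_m}}$ as a uniformly computable real and completes the verification that $\mathcal{C}$ is a Schnorr $P$-test. As $\alpha\in B$ would then force $\alpha$ to fail $\mathcal{C}$, contradicting its Schnorr $P$-randomness, the desired $M$ must exist.
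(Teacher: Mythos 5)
Your proposal is correct in substance, and it reaches the theorem by a noticeably more streamlined route than the paper's. The paper first reduces to a binary alphabet: it disposes of the degenerate cases $P(a)\in\{0,1\}$ via Theorems~\ref{zero_probability} and \ref{one_probability}, collapses $\Omega$ to $\{0,1\}$ by repeated application of Theorem~\ref{contraction}, and only then applies the Motwani--Raghavan form of the Chernoff bound (Theorem~\ref{Chernoff_bound}), whose restriction $\varepsilon\le\min\{Q(0),Q(1)\}$ forces the choice of a threshold $n_0$ with $2/n_0\le\min\{Q(0),Q(1)\}$, a $2/n$-deviation in the test sets, and a final rescaling step ($4n$ in place of $n$, $M^\delta\ge 4^{2+\delta}$) to recover the $1/n$-bound; it finishes by taking the maximum of the resulting $M_a$ over $a\in\Omega$. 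By invoking Hoeffding's inequality directly for the indicator counts $N_a$ under $\lambda_P$ --- valid for every $P(a)\in[0,1]$ and every deviation, exactly as the paper itself does later (Theorem~\ref{HI}) for the probability-theoretic analogue --- you bypass the contraction machinery, the degenerate cases, the parameter restriction, and the rescaling, and you treat all $a\in\Omega$ in a single union bound. Your verification of the Schnorr condition, namely approximating $\Bm{P}{\osg{\mathcal{C}_m}}$ by finite subunions of exactly computable measure with tails made uniformly small through the effective convergence of the explicit exponential bounds, is structurally identical to the paper's decomposition via its recursive functions $p(m,l)$, $q(m,l)$, and $r(m,l)$; likewise your handling of the boundary of the inequality when $P(a)\notin\Q$ matches the paper's observation that equality can then never occur, so membership is decidable from a sufficiently fine approximation.

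One point does need a (one-line) patch: you define the test using the condition $k\ge n^{2+\varepsilon}$ for an arbitrary positive real $\varepsilon$, but if $\varepsilon$ is not computable this condition need not even be semi-decidable, so the claimed r.e.-ness of $\mathcal{C}$ does not follow as written. Since the conclusion is monotone in $\varepsilon$ --- enlarging the exponent only strengthens the hypothesis $k\ge n^{2+\varepsilon}$ --- it suffices to fix a rational $\delta$ with $0<\delta\le\varepsilon$ and run your construction with $\left\lceil n^{2+\delta}\right\rceil$ in place of $n^{2+\varepsilon}$; the paper makes essentially the same move by choosing a positive rational $\delta$ with $\varepsilon\ge 2\delta$. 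With this repair, and with your rationalized thresholds rendering the test sets uniformly decidable, the argument is complete.
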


Theorem~\ref{EffectiveLLN0} results in
the following theorem.
Theorem~\ref{EffectiveLLN}
describes just an effectivization of the law of large numbers
for an arbitrary Schnorr $P$-random infinite sequence
where $P$ is an arbitrary computable finite probability space.

\begin{theorem}%
[Effectivization of the law of large numbers for a Schnorr $P$-random sequence II]
\label{EffectiveLLN}
Let $\Omega$ be an alphabet, and let $P\in\PS(\Omega)$.
Suppose that $P$ is computable.
Let $\alpha\in\Omega^\infty$. Suppose that $\alpha$ is Schnorr $P$-random.
Let $\epsilon$ be an arbitrary positive rational. 
Then there exists a
primitive recursive
function $f\colon\N\to\N$ which satisfies the following two conditions~(i) and (ii):
\begin{enumerate}
  \item For every $n\in\N^+$ and every $k\in\N^+$
    if $k\ge f(n)$ then
    for every $a\in\Omega$ it holds that
    \begin{equation*}%
      \abs{\frac{N_a(\rest{\alpha}{k})}{k}-P(a)}<\frac{1}{n},
    \end{equation*}
    where $N_a(\sigma)$ denotes the number of the occurrences of $a$ in $\sigma$
    for every $a\in\Omega$ and $\sigma\in\Omega^*$.
\item The
primitive
recursive function $f$ has the following form:
\[f(n)=\left\lceil n^{2+\epsilon} \right\rceil\] for all but finitely many
$n\in\N$.
\end{enumerate}
\end{theorem}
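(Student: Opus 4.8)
The plan is to deduce Theorem~\ref{EffectiveLLN} directly from Theorem~\ref{EffectiveLLN0} by a single application of the latter together with a finite modification, and then to verify that the resulting function is primitive recursive. First I would apply Theorem~\ref{EffectiveLLN0} with its positive real $\varepsilon$ taken to be the given positive rational $\epsilon$. This yields an $M\in\N^+$ such that for every $n\ge M$ and every $k\in\N^+$ with $k\ge n^{2+\epsilon}$ we have $\abs{N_a(\rest{\alpha}{k})/k-P(a)}<1/n$ for all $a\in\Omega$. I then define
\[
  f(n)=\left\lceil \left(\max\{n,M\}\right)^{2+\epsilon}\right\rceil
\]
for every $n\in\N$. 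The idea is that for $n\ge M$ this is exactly $\lceil n^{2+\epsilon}\rceil$, which settles condition~(ii) at once, while for the finitely many remaining $n$ the value is the constant $\lceil M^{2+\epsilon}\rceil$, chosen large enough to force the \emph{weaker} bound $1/n$ through the $M$-instance of Theorem~\ref{EffectiveLLN0}.

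To verify condition~(i), I would fix $n,k\in\N^+$ with $k\ge f(n)$ and split into two cases. If $n\ge M$, then $f(n)=\lceil n^{2+\epsilon}\rceil\ge n^{2+\epsilon}$, hence $k\ge n^{2+\epsilon}$, and Theorem~\ref{EffectiveLLN0} gives the desired bound directly. If $1\le n<M$, then $f(n)=\lceil M^{2+\epsilon}\rceil\ge M^{2+\epsilon}$, hence $k\ge M^{2+\epsilon}$; applying Theorem~\ref{EffectiveLLN0} to the value $M$ (legitimate since $M\ge M$) yields $\abs{N_a(\rest{\alpha}{k})/k-P(a)}<1/M<1/n$ for all $a\in\Omega$, where the last inequality is strict precisely because $n<M$. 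This establishes~(i) in both cases. Condition~(ii) is then immediate, since $\max\{n,M\}=n$ whenever $n\ge M$, so that $f(n)=\lceil n^{2+\epsilon}\rceil$ for all but the finitely many $n<M$. Note that $M$ itself need not be obtainable effectively from the data; only its existence is needed, and the constant $M$ is absorbed into the definition of $f$.

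The remaining and genuinely technical point, which I expect to require the most care, is that $f$ is primitive recursive. Writing $\epsilon=p/q$ with $p,q\in\N^+$, I would observe that $m^{2+\epsilon}=(m^{2q+p})^{1/q}$, so that $\lceil m^{2+\epsilon}\rceil$ equals the least $s\in\N$ with $s^q\ge m^{2q+p}$. Since $x\mapsto x^q$ is increasing on the nonnegatives, this least-witness description is a bounded minimization over a primitive recursive predicate, and the search may be bounded by $m^{2q+p}$ itself; hence $m\mapsto\lceil m^{2+\epsilon}\rceil$ is primitive recursive. Composing with the primitive recursive map $n\mapsto\max\{n,M\}$ then shows that $f$ is primitive recursive. (Equivalently, $f$ agrees with the primitive recursive function $n\mapsto\lceil n^{2+\epsilon}\rceil$ except at the finitely many points $n<M$, where it takes fixed constant values, and such a finite modification of a primitive recursive function is again primitive recursive.) This completes all three requirements, so the proof reduces entirely to Theorem~\ref{EffectiveLLN0} plus the closure of the primitive recursive functions under integer root extraction and bounded minimization.
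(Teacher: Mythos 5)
Your proposal is correct and follows essentially the same route as the paper: both apply Theorem~\ref{EffectiveLLN0} once to obtain $M$, define $f$ as the finite modification of $n\mapsto\lceil n^{2+\epsilon}\rceil$ that is constant equal to $\lceil M^{2+\epsilon}\rceil$ below $M$, and check conditions~(i) and (ii), with the case $n<M$ handled via the bound $1/M<1/n$. The only difference is cosmetic: you spell out the primitive recursiveness of $m\mapsto\lceil m^{2+\epsilon}\rceil$ (via $\epsilon=p/q$ and bounded search for the least $s$ with $s^q\ge m^{2q+p}$), a point the paper simply declares easy to see.
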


\begin{proof}
We define a function $g\colon\N\to\N$ by $g(n):=\left\lceil n^{2+\epsilon} \right\rceil$.
Then, on the one hand, it is easy to see that
the function $g\colon\N\to\N$ is a primitive recursive function.
On the other hand, it follows from Theorem~\ref{EffectiveLLN0} that
there exists $M\in\N^+$ such that for every $n\ge M$ and every $k\in\N^+$ if $k\ge g(n)$ then
for every $a\in\Omega$ it holds that
\begin{equation}\label{EffectiveLLNpr-eq}
  \abs{\frac{N_a(\rest{\alpha}{k})}{k}-P(a)}<\frac{1}{n}.
\end{equation}
Then we
define a function $f\colon\N\to\N$ by the condition that
$f(n):=g(n)$ if $n\ge M$ and $f(n):=g(M)$ otherwise.
Since $g\colon\N\to\N$ is a primitive recursive function,
it is easy to see that $f\colon\N\to\N$ is still a primitive recursive function.
Then, based on \eqref{EffectiveLLNpr-eq}, it is
easy to check that
the conditions~(i) and (ii) of Theorem~\ref{EffectiveLLN} hold for this $f$.
\end{proof}

Theorem~\ref{EffectiveLLN}
leads to the following corollary, in particular, for the notion of the
usual
Schnorr randomness for an infinite binary sequence with respect to Lebesgue measure.

\begin{corollary}
[Effectivization of the law of large numbers for a Schnorr random sequence]
Let $\alpha$ be an arbitrary Schnorr random infinite binary sequence.
Then there exists a primitive recursive function $f\colon\N\to\N$ such that
for every $n\in\N^+$ and every $k\in\N^+$ if $k\ge f(n)$ then
for every $a\in\{0,1\}$ it holds that
\begin{equation*}
  \abs{\frac{N_a(\rest{\alpha}{k})}{k}-\frac{1}{2}}<\frac{1}{n},
\end{equation*}
where $N_a(\sigma)$ denotes the number of the occurrences of $a$ in $\sigma$
for every $a\in\{0,1\}$ and $\sigma\in\X$.
\end{corollary}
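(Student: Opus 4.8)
The plan is to obtain the corollary as a direct specialization of Theorem~\ref{EffectiveLLN} to the uniform probability space on the alphabet $\{0,1\}$. First I would set $\Omega=\{0,1\}$ and define $P\in\PS(\{0,1\})$ by $P(0)=P(1)=1/2$. Since the constant function $k\mapsto 1/2$ witnesses that the real $1/2$ is computable, $P(a)$ is a computable real for every $a\in\{0,1\}$, and hence $P$ is a computable finite probability space. This is the only computability check required, and it is immediate.

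Next I would invoke the identification recorded in the remark following Definition~\ref{Schnorr_P-randomness}: in the case $\Omega=\{0,1\}$ with $P(0)=P(1)=1/2$, the notion of Schnorr $P$-randomness coincides with the usual notion of Schnorr randomness for an infinite binary sequence with respect to Lebesgue measure. Therefore the hypothesis that $\alpha$ is Schnorr random is exactly the hypothesis that $\alpha$ is Schnorr $P$-random for this particular $P$, and so all hypotheses of Theorem~\ref{EffectiveLLN} are met.

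With these identifications in place, I would fix an arbitrary positive rational, say $\epsilon=1$ (its precise value is irrelevant, since the corollary imposes no condition on the growth rate of $f$), and apply Theorem~\ref{EffectiveLLN}. This yields a primitive recursive function $f\colon\N\to\N$ satisfying condition~(i) of that theorem. Substituting $P(a)=1/2$ into the inequality $\abs{N_a(\rest{\alpha}{k})/k-P(a)}<1/n$ furnished by condition~(i) gives precisely the desired bound $\abs{N_a(\rest{\alpha}{k})/k-1/2}<1/n$ for every $a\in\{0,1\}$ whenever $k\ge f(n)$. Note that only condition~(i) is needed here; condition~(ii), which pins down the form of $f$, plays no role in the corollary.

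There is no substantive obstacle: the argument is a routine instantiation. The only points deserving a moment's attention are the computability of the uniform $P$ and the coincidence of Schnorr randomness with Schnorr $P$-randomness, both of which are supplied directly by the preliminaries --- the former by exhibiting the constant rational approximation, the latter by the cited remark. If anything, the subtlest step is merely confirming that the two randomness notions are matched up correctly, and this is settled by the explicit statement in the text.
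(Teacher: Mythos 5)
Your proposal is correct and matches the paper's own proof essentially verbatim: the paper likewise defines the uniform $U\in\PS(\{0,1\})$ with $U(0)=U(1)=1/2$, identifies Schnorr randomness of $\alpha$ with Schnorr $U$-randomness, notes that $1/2$ is a computable real, and invokes Theorem~\ref{EffectiveLLN}. Your additional remarks --- fixing an arbitrary $\epsilon$ such as $1$ and observing that only condition~(i) of Theorem~\ref{EffectiveLLN} is needed --- are just explicit spellings-out of steps the paper leaves implicit.
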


\begin{proof}
Let
$U$
be a finite probability space on $\{0,1\}$ such that $U(0)=U(1)=1/2$.
Then the infinite binary sequence $\alpha$ is Schnorr $U$-random. 
Thus, since $1/2$ is a computable real, the result follows from Theorem~\ref{EffectiveLLN}.
\end{proof}

Theorem~\ref{SpeedLimitTheorem} below gives a converse of Theorem~\ref{EffectiveLLN0}.
The proof of Theorem~\ref{SpeedLimitTheorem} is given in
Section~\ref{Proof-of-TheoremSpeedLimitTheorem} below.
The central limit theorem plays a crucial role in proving
Theorem~\ref{SpeedLimitTheorem}.

\begin{theorem}%
[Convergence speed limit theorem]
\label{SpeedLimitTheorem}
Let $\Omega$ be an alphabet, and let $P\in\PS(\Omega)$.
Suppose that $P$ is computable.
Let $\alpha\in\Omega^\infty$ and let $a\in\Omega$.
Suppose that $0<P(a)<1$ and
there exists $n_0\in\N^+$
such that for every $n\ge n_0$ it holds that
\begin{equation*}%
  \abs{N_a(\rest{\alpha}{4^n})-4^n P(a)}\le 2^n,
\end{equation*}
where $N_a(\sigma)$ denotes the number of the occurrences of $a$ in $\sigma$
for every
$\sigma\in\Omega^*$.
Then $\alpha$ is not Schnorr  $P$-random.
\qed
\end{theorem}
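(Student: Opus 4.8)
The plan is to prove the contrapositive by constructing a Schnorr $P$-test that $\alpha$ fails to pass. First I would pass to the centered partial sums $S_k(\gamma):=N_a(\rest{\gamma}{k})-k\,P(a)$, which along any $\gamma\in\Omega^\infty$ are the partial sums of the independent, identically distributed, bounded random variables $X_i(\gamma):=\mathbf{1}[\gamma(i)=a]-P(a)$ on the probability space $(\Omega^\infty,\mathcal{B}_\Omega,\Bm{P}{\cdot})$, each of mean $0$ and variance $\sigma^2:=P(a)(1-P(a))$, with $0<\sigma^2\le 1/4$ since $0<P(a)<1$. For each $n\in\N$ let $D_n:=S_{4^{n+1}}-S_{4^n}$ be the increment over the block of coordinates $(4^n,4^{n+1}]$, and put
\[
  A_n := \left\{\gamma\in\Omega^\infty \;\middle|\; \abs{D_n(\gamma)}\le 3\cdot 2^n\right\}.
\]
The hypothesis $\abs{S_{4^n}(\alpha)}\le 2^n$ for all $n\ge n_0$ yields, by the triangle inequality, $\abs{D_n(\alpha)}\le 2^{n+1}+2^n=3\cdot 2^n$ for all $n\ge n_0$, so $\alpha\in\bigcap_{n\ge n_0}A_n$. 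The decisive structural point is that $A_n$ depends only on the coordinates in $(4^n,4^{n+1}]$; hence the events $(A_n)_n$ are mutually independent under the product measure $\Bm{P}{\cdot}$, and each $A_n$ is clopen.

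Next I would estimate $q_n:=\Bm{P}{A_n}$. Since $D_n$ is a sum of $L_n:=3\cdot 4^n$ centered i.i.d.\ terms of variance $\sigma^2$, its standard deviation equals $\sigma\sqrt{3}\,2^n$, and the scaling is arranged so that $A_n$ is \emph{exactly} the event $\abs{D_n}/(\sigma\sqrt{3}\,2^n)\le\sqrt{3}/\sigma$, with a threshold independent of $n$. The central limit theorem then gives $q_n\to 2\Phi(\sqrt{3}/\sigma)-1=:q$, where $\Phi$ is the standard normal distribution function; since $\sqrt{3}/\sigma<\infty$ we have $q<1$. As $1-q_n\to 1-q>0$, the product $\prod_{n\ge m}q_n$ vanishes for every $m$, and by independence $\Bm{P}{\bigcap_{n\ge m}A_n}=\prod_{n\ge m}q_n=0$. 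Thus the set $H:=\bigcup_{m\ge 1}\bigcap_{n\ge m}A_n$, an increasing union of null sets containing $\alpha$, is itself null.

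It remains to cover $H$ by the levels of a genuine Schnorr $P$-test. Writing $V_{m,k}:=\bigcap_{j=0}^{k-1}A_{m+j}$, a clopen set with $\Bm{P}{V_{m,k}}=\prod_{j=0}^{k-1}q_{m+j}$, where each $q_n$ is a uniformly computable real (a finite binomial sum in the computable real $P(a)$), one has $\bigcap_{n\ge m}A_n\subseteq V_{m,k}$ for every $k$, while $\Bm{P}{V_{m,k}}\to 0$ as $k\to\infty$. For each test level $\ell\in\N^+$ I would select, by an effective search that terminates precisely because $\prod_{j}q_{m+j}\to 0$, an integer $k(\ell,m)$ with $\Bm{P}{V_{m,k(\ell,m)}}<2^{-\ell-m}$, and let $\mathcal{C}_\ell$ code the open set $U_\ell:=\bigcup_{m\ge 1}V_{m,k(\ell,m)}$. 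Then $U_\ell\supseteq\bigcup_{m\ge 1}\bigcap_{n\ge m}A_n=H\ni\alpha$, the union bound gives $\Bm{P}{U_\ell}<2^{-\ell}$, and $\mathcal{C}_\ell$ is r.e.\ uniformly in $\ell$ because each $V_{m,k}$ is a decidable finite union of cylinders of length $4^{m+k}$.

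The main obstacle --- the one place where more is required than in the Martin-L\"of case --- is to verify that $\Bm{P}{U_\ell}$ is \emph{uniformly computable}, as a Schnorr $P$-test demands. The key is that the finite truncations $U_\ell^{(N)}:=\bigcup_{1\le m\le N}V_{m,k(\ell,m)}$ have exactly computable measure (a finite union of cylinders, whose $\Bm{P}{\cdot}$ is obtained by refining to a common length and summing), whereas the tail obeys the explicit computable estimate $\Bm{P}{U_\ell\setminus U_\ell^{(N)}}<2^{-\ell-N}$ coming from the geometric thresholds chosen above. This furnishes a computable modulus of convergence for $\Bm{P}{U_\ell}$, uniform in $\ell$, so $\mathcal{C}$ is indeed a Schnorr $P$-test; since $\alpha\in U_\ell=\osg{\mathcal{C}_\ell}$ for every $\ell$, the sequence $\alpha$ does not pass $\mathcal{C}$ and is therefore not Schnorr $P$-random. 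I expect the residual subtleties to lie entirely in making the unbounded search for $k(\ell,m)$ rigorous; invoking an effective form of the central limit theorem, such as the Berry--Esseen inequality, supplies an explicit closed form for $k(\ell,m)$ and removes any appeal to search.
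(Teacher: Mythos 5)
Your proof is correct, and its probabilistic core coincides with the paper's: both apply the central limit theorem to the sums over the blocks $(4^n,4^{n+1}]$, i.e.\ to $3\cdot 4^n$ i.i.d.\ terms against the threshold $3\cdot 2^n$, and both exploit the independence of disjoint blocks to obtain geometric decay. Where you genuinely diverge is in how the Schnorr test is assembled. The paper hardwires into the test the constants obtained \emph{non-effectively} from the CLT (an $n_0$ and a single rational $r<1$ with $\Bm{P}{\abs{S_{3\cdot 4^n}-3\cdot 4^n P(a)}\le 3\cdot 2^n}<r$ for $n\ge n_0$) together with the $n_1$ coming from the hypothesis on $\alpha$; its level $m$ is then one decidable set $S(m)$ of strings of length $4^{f(m)}$ satisfying all checkpoint conditions $\abs{N_a(\tau)-4^k P(a)}\le 2^k$ for $n_1\le k\le f(m)$, whose measure is bounded by $r^{f(m)-n_1}<2^{-m}$ via the same increment/conditioning trick you use and is exactly a finite computable sum, so r.e.-ness and uniform computability are immediate. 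Your construction instead covers the entire null set $H=\bigcup_{m}\bigcap_{n\ge m}A_n$, producing a single \emph{universal} test that rejects every sequence satisfying the hypothesis for any $n_0$ whatsoever --- a mild strengthening of what the theorem asks --- at the price of the union over $m$ and two extra effectivizations: the certified search for $k(\ell,m)$ (which indeed terminates even without Berry--Esseen, since each $q_n$ is a computable real and the product provably tends to $0$, so a rational witness of $\Bm{P}{V_{m,k}}<2^{-\ell-m}$ is eventually found) and the truncation estimate giving a computable modulus for $\Bm{P}{U_\ell}$; you handle both correctly. The one point you gloss is the decidability of the boundary comparisons $\abs{j-3\cdot 4^n P(a)}\le 3\cdot 2^n$ for the computable real $P(a)$, which is needed both to enumerate your cylinders and to compute each $q_n$ exactly; as the paper makes explicit in its own proof, this requires the case split that either $P(a)\in\Q$ (directly decidable) or $P(a)\notin\Q$ (equality never occurs, so the comparison is decidable by approximating $P(a)$). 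That is a two-line fix, not a gap.
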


Theorem~\ref{EffectiveLLN0} and Theorem~\ref{SpeedLimitTheorem} together result in
Theorem~\ref{thm:main} below.
Let $\Omega$ be an alphabet.
A finite probability space $P\in\PS(\Omega)$ is called \emph{non-trivial}
if $P(a)<1$ for every $a\in\Omega$,
i.e., if there exists $a\in\Omega$ such that $0<P(a)<1$.

\begin{theorem}
[Main result I regarding algorithmic randomness]
\label{thm:main}
Let $\Omega$ be an alphabet, and let $P\in\PS(\Omega)$.
Suppose that $P$ is a non-trivial computable finite probability space.
Let $\alpha\in\Omega^\infty$. Suppose that $\alpha$ is Schnorr $P$-random.
Let $t$ be an arbitrary positive real. 
Then the following conditions~(i) and (ii) are equivalent to each other:
\begin{enumerate}
\item There exists $M\in\N^+$ such that for every $n\ge M$ and every $k\in\N^+$
  if $k\ge n^t$ then for every $a\in\Omega$ it holds that
  \begin{equation*}
    \abs{\frac{N_a(\rest{\alpha}{k})}{k}-P(a)}<\frac{1}{n},
  \end{equation*}
  where $N_a(\sigma)$ denotes the number of the occurrences of $a$ in $\sigma$
  for every $a\in\Omega$ and $\sigma\in\Omega^*$.
  \item $t>2$.
\end{enumerate}
\end{theorem}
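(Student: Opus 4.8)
The plan is to prove the equivalence of (i) and (ii) by combining the two main theorems already available, treating the two implications separately. The core observation is that condition~(i) with parameter $t$ is exactly the conclusion of Theorem~\ref{EffectiveLLN0} when $t=2+\varepsilon$, and that Theorem~\ref{SpeedLimitTheorem} supplies the obstruction that rules out fast convergence for a Schnorr $P$-random sequence.

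For the direction (ii)$\implies$(i), I would argue as follows. Suppose $t>2$. Then I can write $t=2+\varepsilon$ with $\varepsilon:=t-2>0$, and $\varepsilon$ is a positive real. Since $P$ is computable and $\alpha$ is Schnorr $P$-random, Theorem~\ref{EffectiveLLN0} applies directly and yields an $M\in\N^+$ such that for every $n\ge M$ and every $k\in\N^+$ with $k\ge n^{2+\varepsilon}=n^t$, the inequality $\abs{N_a(\rest{\alpha}{k})/k-P(a)}<1/n$ holds for every $a\in\Omega$. This is precisely condition~(i), so this direction is essentially immediate once the substitution $\varepsilon=t-2$ is made.

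The harder direction is (i)$\implies$(ii), equivalently the contrapositive $t\le 2\implies\neg(\mathrm{i})$. This is where Theorem~\ref{SpeedLimitTheorem} does the work, and I expect this to be the main obstacle: the speed-limit theorem is phrased in terms of the special subsequence of lengths $4^n$ and the deviation bound $\abs{N_a(\rest{\alpha}{4^n})-4^n P(a)}\le 2^n$, whereas condition~(i) is a statement about all sufficiently large $k\ge n^t$ with the relative bound $1/n$. The plan is to argue by contradiction: assume $t\le 2$ and that (i) holds. Since $P$ is non-trivial, fix $a\in\Omega$ with $0<P(a)<1$. I would then specialize condition~(i) to the lengths $k=4^m$ and choose the index $n$ in condition~(i) as a suitable function of $m$ so that, on the one hand, $n^t\le 4^m$ (so the hypothesis $k\ge n^t$ is met and condition~(i) forces a bound) and, on the other hand, the resulting relative bound $1/n$ translates into an absolute deviation no larger than $2^m$. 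Concretely, because $t\le 2$, taking $n\approx 2^m$ gives $n^t\le 2^{2m}=4^m$, so condition~(i) yields $\abs{N_a(\rest{\alpha}{4^m})/4^m-P(a)}<1/n\approx 2^{-m}$, i.e. $\abs{N_a(\rest{\alpha}{4^m})-4^m P(a)}<4^m\cdot 2^{-m}=2^m$ for all large $m$. This is exactly the hypothesis of Theorem~\ref{SpeedLimitTheorem} (with its $n$ set to $m$ and $n_0$ the threshold $M$), which then concludes that $\alpha$ is not Schnorr $P$-random, contradicting our standing assumption.

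The delicate points I would need to handle carefully are the rounding and the exact choice of the integer $n$ as a function of $m$: one must verify that $n=\lceil 2^m\rceil=2^m$ (or a nearby integer) simultaneously satisfies $n\ge M$ for all large $m$, satisfies $n^t\le 4^m$ using $t\le 2$, and produces an absolute bound that is genuinely $\le 2^m$ rather than merely $O(2^m)$, since Theorem~\ref{SpeedLimitTheorem} demands the sharp constant. If the strict inequality $1/n$ combined with the chosen $n$ gives a bound slightly exceeding $2^m$, I would instead pick $n$ marginally larger, e.g. replacing the exponent base by a constant strictly between $1$ and $2^{(2-t)/2}$ in the case $t<2$, or relying on the strictness of $<1/n$ together with $n=2^m$ in the boundary case $t=2$ to secure the $\le 2^n$ bound. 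Once the arithmetic is arranged so that the hypotheses of Theorem~\ref{SpeedLimitTheorem} are literally satisfied for all $m\ge n_0$, the contradiction is immediate and the contrapositive is established, completing the equivalence.
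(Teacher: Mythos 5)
Your proposal is correct and matches the paper's proof essentially line for line: (ii)$\Rightarrow$(i) is the direct substitution $\varepsilon=t-2$ into Theorem~\ref{EffectiveLLN0}, and (i)$\Rightarrow$(ii) assumes $t\le 2$, uses non-triviality to pick $a$ with $0<P(a)<1$, specializes (i) to $k=n^2$ with $n=2^m$ so that $k\ge n^t$, and feeds the resulting bound $\abs{N_a(\rest{\alpha}{4^m})-4^mP(a)}<2^m\le 2^m$ into Theorem~\ref{SpeedLimitTheorem} for a contradiction. The rounding worries you flag dissolve immediately, since $n=2^m$ is already an integer, $n^t\le n^2=4^m$ holds for every $t\le 2$, and the strict inequality $<1/n$ from (i) yields a deviation strictly below $2^m$, so the sharp $\le 2^m$ hypothesis is met with room to spare.
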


\begin{proof}
On the one hand, the implication~(ii) $\Rightarrow$ (i) of Theorem~\ref{thm:main}
follows immediately from Theorem~\ref{EffectiveLLN0}.
On the other hand,  the implication~(i) $\Rightarrow$ (ii) of Theorem~\ref{thm:main}
is proved as follows:
Suppose that (i) of Theorem~\ref{thm:main} holds.
Let us assume contrarily that $t\le 2$.
Note that there exists $a\in\Omega$ such that $0<P(a)<1$,
since $P$ is a non-trivial.
Then, for this $a$, there exists $M\in\N^+$ such that for every $n\ge M$ it holds that
\begin{equation*}%
  \abs{\frac{N_a(\rest{\alpha}{n^2})}{n^2}-P(a)}\le\frac{1}{n},
\end{equation*}
and therefore there exists
$n_0\in\N^+$
such that
$\abs{N_a(\rest{\alpha}{4^n})-4^n P(a)}\le 2^n$
for all $n\ge n_0$.
Thus
it follows from Theorem~\ref{SpeedLimitTheorem} that
$\alpha$ is not Schnorr $P$-random.
However, this contradicts the assumption of the theorem,
and therefore we must have that $t>2$.
This completes the proof.
\end{proof}

\begin{remark}
Let $\Omega$ be an alphabet, and let $P\in\PS(\Omega)$.
Let $\alpha\in\Omega^\infty$.
Suppose that $\alpha$ is Schnorr $P$-random.
In Theorem~\ref{thm:main},
we further make the assumption that
$P$ is non-trivial.
Contrarily,
suppose that this assumption does not hold.
Then
we have that
$P(a)=0$ or $P(a)=1$ for every $a\in\Omega$.
Therefore, due to (i) of
Theorem~\ref{zero_probability} and Theorem~\ref{one_probability}
in Section~\ref{Subsection:Proof-of-TheoremEffectiveLLN0} below,
the following holds trivially:
For every real $t>0$ it holds that for every $n\in\N^+$ and every $k\in\N^+$ if $k\ge n^{t}$ then for every $a\in\Omega$ it holds that
\begin{equation*}%
  \abs{\frac{N_a(\rest{\alpha}{k})}{k}-P(a)}<\frac{1}{n},
\end{equation*}
where $N_a(\sigma)$ denotes the number of the occurrences of $a$ in $\sigma$
for every $a\in\Omega$ and $\sigma\in\Omega^*$.
Hence,
the implication~(i) $\Rightarrow$ (ii) of Theorem~\ref{thm:main} does not hold
without the assumption of the non-triviality of $P$.
\qed
\end{remark}

\subsection{The proof of Theorem~\ref{EffectiveLLN0}}
\label{Subsection:Proof-of-TheoremEffectiveLLN0}

In this subsection, we prove Theorem~\ref{EffectiveLLN0}.
For that purpose, we
first prove
Theorems~\ref{zero_probability}, \ref{one_probability}, and \ref{contraction} below.
The proofs of
Theorems~\ref{zero_probability}, \ref{one_probability}, and \ref{contraction} are
obtained
by adapting
to Schnorr $P$-randomness 
the proofs of Theorems~13, 12, and 16 of Tadaki~\cite{T21arXiv},
respectively,
which are the corresponding original results
for Martin-L\"of $P$-randomness
in Tadaki~\cite{T21arXiv}.

\begin{theorem}\label{zero_probability}
Let $\Omega$ be an alphabet, and let $P\in\PS(\Omega)$.
\begin{enumerate}
\item
  Let $a\in\Omega$ and $\alpha\in\Omega^\infty$.
  Suppose that $\alpha$ is Schnorr $P$-random and $P(a)=0$.
  Then $\alpha$ does not contain $a$.
\item Actually,
  there exists a single Schnorr $P$-test
  $\mathcal{C}\subset\N^+\times\Omega^*$
  such that, for every $\alpha\in \Omega^\infty$, if $\alpha$ passes
  $\mathcal{C}$
  then $\alpha$ does not contain any element of $P^{-1}(\{0\})$.
\end{enumerate}
\end{theorem}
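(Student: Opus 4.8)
The plan is to prove the stronger clause~(ii) directly by exhibiting a single test, and then to read off~(i) as an immediate corollary. Write $Z:=P^{-1}(\{0\})=\{a\in\Omega\mid P(a)=0\}$; since $\Omega$ is finite, $Z$ is a \emph{fixed finite} subset of $\Omega$. The key observation is that any finite string using a symbol of $Z$ has $P$-value $0$, so the set of infinite sequences that ever display a symbol of $Z$ is $\lambda_P$-null. I would exploit the finiteness of $Z$ by treating it as a hardcoded parameter of the construction rather than something to be computed from $P$; this is what makes the (possible) non-computability of $P$ irrelevant here, and it also means the measure bound and the uniform-computability clause of a Schnorr $P$-test will both hold in the most trivial way, because the relevant measure is exactly $0$.

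Concretely, I would set $S:=\{\sigma\in\Omega^*\mid \sigma(k)\in Z \text{ for some } k\le\abs{\sigma}\}$ and define $\mathcal{C}:=\N^+\times S$, so that $\mathcal{C}_n=S$ for every $n\in\N^+$. First I would check that $\mathcal{C}$ is r.e.: since $Z$ is a fixed finite set, membership of $(n,\sigma)$ in $\mathcal{C}$ is decided by scanning the finitely many symbols of $\sigma$, so $\mathcal{C}$ is in fact decidable, hence r.e. Next I would compute $\Bm{P}{\osg{S}}$. Observe $\osg{S}=\bigcup_{a\in Z}\bigcup_{k\ge 1}\{\alpha\in\Omega^\infty\mid\alpha(k)=a\}$, and each set $\{\alpha\mid\alpha(k)=a\}$ is the finite disjoint union $\bigcup_{\tau\in\Omega^{k-1}}\osg{\tau a}$, whose measure is $\sum_{\tau\in\Omega^{k-1}}P(\tau a)=P(a)\sum_{\tau\in\Omega^{k-1}}P(\tau)=P(a)=0$ by~\eqref{pBm} and the fact that $\sum_{\tau\in\Omega^{k-1}}P(\tau)=1$. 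Thus $\osg{S}$ is a countable union of $\lambda_P$-null sets, so $\Bm{P}{\osg{\mathcal{C}_n}}=\Bm{P}{\osg{S}}=0<2^{-n}$ for every $n$, and the uniform-computability requirement is met by the total recursive function $f(n,k):=0$. Hence $\mathcal{C}$ is a Schnorr $P$-test.

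To finish~(ii), suppose $\alpha$ passes $\mathcal{C}$. Then there is some $n$ with $\alpha\notin\osg{\mathcal{C}_n}=\osg{S}$; as $\mathcal{C}_n$ does not depend on $n$, this simply says $\alpha\notin\osg{S}$, i.e.\ no prefix of $\alpha$ lies in $S$, which means $\alpha$ contains no element of $Z=P^{-1}(\{0\})$, exactly the conclusion of~(ii). Clause~(i) then follows at once: if $\alpha$ is Schnorr $P$-random and $P(a)=0$, then $a\in Z$ and $\alpha$ passes the test $\mathcal{C}$ above, so by~(ii) $\alpha$ contains no element of $Z$ and in particular does not contain $a$.

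I do not expect a genuine technical obstacle; the only point requiring care is conceptual, namely recognizing that one must \emph{not} attempt to compute $Z$ from $P$ (which may be impossible when $P$ is non-computable), but may instead treat $Z$ as a fixed finite parameter. Once this is granted, the fact that the offending open set has measure exactly $0$ is precisely what lets both defining conditions of a Schnorr $P$-test---the bound $\Bm{P}{\osg{\mathcal{C}_n}}<2^{-n}$ and the uniform computability of these measures---be satisfied trivially, which is the single place where the Schnorr adaptation differs from the Martin-L\"of original.
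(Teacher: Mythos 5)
Your proposal is correct and matches the paper's proof essentially step for step: you construct the same test $\mathcal{C}=\N^+\times S$ with $S$ the set of strings containing a zero-probability symbol (the paper writes it as $\Omega^*\setminus(\Omega\setminus P^{-1}(\{0\}))^*$), use the measure-zero of $\osg{S}$ to satisfy both the bound $\Bm{P}{\osg{\mathcal{C}_n}}<2^{-n}$ and the uniform-computability clause trivially, and derive~(i) from~(ii) just as the paper does. Your remarks about hardcoding $Z$ as a finite parameter and your position-by-position computation of $\Bm{P}{\osg{S}}=0$ are slightly more explicit than the paper's one-line justification, but they are the same argument.
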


\begin{proof}
It is sufficient to prove
the result~(ii)
of Theorem~\ref{zero_probability}.
For that purpose, we first define
a subset $S$ of $\Omega^*$
as $\Omega^*\setminus(\Omega\setminus P^{-1}(\{0\}))^*$,
and then define $\mathcal{C}$ as the set $\{(n,\sigma)\mid n\in\N^+\;\&\;\sigma\in S\}$.
Since $S$ is r.e., $\mathcal{C}$ is also r.e., obviously.
Moreover, since $P(\sigma)=0$ for every $\sigma\in S$,
we have
that
$\Bm{P}{\osg{\mathcal{C}_n}}=P(\mathcal{C}_n)=P(S)=0$ for each $n\in\N^+$.
Hence, $\mathcal{C}$ is Schnorr $P$-test.

Let $\alpha\in \Omega^\infty$. Suppose that $\alpha$ passes $\mathcal{C}$.
Assume contrarily that $\alpha$ contains some element $a_0$ of $P^{-1}(\{0\})$.
Then there exists a prefix $\sigma_0$ of $\alpha$ which contains $a_0$.
It follows that $\sigma_0\in S$, and therefore $\alpha\in\osg{\mathcal{C}_n}$ for all $n\in\N^+$.
Thus,
we have a contradiction.
Hence,
the proof of the result~(ii) is completed.
\end{proof}

\begin{theorem}\label{one_probability}
Let $\Omega$ be an alphabet, and let $P\in\PS(\Omega)$. Let $a\in\Omega$ and $\alpha\in\Omega^\infty$.
Suppose that $\alpha$ is Schnorr $P$-random and $P(a)=1$.
Then $\alpha$ consists only of $a$,
i.e.,
$\alpha=aaaaaa\dotsc\dotsc$.
\end{theorem}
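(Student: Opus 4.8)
The plan is to deduce this from Theorem~\ref{zero_probability}, reducing the case $P(a)=1$ to a finite collection of instances of the ``probability zero'' case. First I would observe that since $P\in\PS(\Omega)$ satisfies $\sum_{b\in\Omega}P(b)=1$ with $P(b)\ge 0$ for all $b$, the hypothesis $P(a)=1$ forces $P(b)=0$ for every $b\in\Omega$ with $b\neq a$. Because $\Omega$ is a finite alphabet, this is a finite list of symbols, each of probability zero.

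Next, I would apply Theorem~\ref{zero_probability}(i) to the Schnorr $P$-random sequence $\alpha$, once for each such $b$: for every $b\in\Omega\setminus\{a\}$ the sequence $\alpha$ does not contain $b$. Since no symbol other than $a$ occurs in $\alpha$ and $\alpha\in\Omega^\infty$, every entry $\alpha(n)$ must equal $a$, which is exactly the conclusion $\alpha=aaaa\dotsc$. One can equally invoke part~(ii) of Theorem~\ref{zero_probability}, which already supplies a single Schnorr $P$-test $\mathcal{C}$ such that any $\alpha$ passing $\mathcal{C}$ avoids every element of $P^{-1}(\{0\})\supseteq\Omega\setminus\{a\}$, so that one test does the whole job.

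Alternatively, I could give a self-contained argument mirroring the proof of Theorem~\ref{zero_probability}: set $S:=\Omega^*\setminus\{a\}^*$, the set of finite strings in which some symbol different from $a$ occurs, and put $\mathcal{C}:=\{(n,\sigma)\mid n\in\N^+\;\&\;\sigma\in S\}$. Then $S$ is r.e., hence $\mathcal{C}$ is r.e.; and for every $\sigma\in S$ we have $P(\sigma)=0$, since the product defining $P(\sigma)$ contains a factor $P(b)=0$. Thus $\Bm{P}{\osg{\mathcal{C}_n}}=P(S)=0$ for all $n$, and this measure is trivially uniformly computable (it is the constant $0$), so $\mathcal{C}$ is a Schnorr $P$-test. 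If $\alpha$ is Schnorr $P$-random it passes $\mathcal{C}$, so $\alpha\notin\osg{\mathcal{C}_n}=\osg{S}$ for some $n$; since $\osg{S}$ is precisely the set of sequences having a prefix in $S$, i.e.\ containing some symbol $\neq a$, this means $\alpha$ contains only $a$.

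The argument involves no real obstacle: it is essentially a corollary of Theorem~\ref{zero_probability}. The only points requiring a line of care are the elementary observation that $P(a)=1$ forces $P(b)=0$ for $b\neq a$, and the identification of $\osg{S}$ with the set of sequences that contain a symbol other than $a$; both are routine.
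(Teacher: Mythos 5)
Your proposal is correct and takes essentially the same route as the paper: the paper's proof likewise observes that $P(a)=1$ together with $\sum_{x\in\Omega}P(x)=1$ forces $P(x)=0$ for every $x\in\Omega\setminus\{a\}$, and then applies the result~(i) of Theorem~\ref{zero_probability} to conclude that $\alpha$ contains no symbol other than $a$. Your alternative self-contained test with $S=\Omega^*\setminus\{a\}^*$ is just an unwinding of the proof of Theorem~\ref{zero_probability} in this special case and is sound, but unnecessary.
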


\begin{proof}
Let $\Omega$ be an alphabet, and let $P\in\PS(\Omega)$.
Let $a\in\Omega$.
Suppose that $\alpha$ is Schnorr $P$-random and $P(a)=1$.
Then, since $P(a)=1$ and $\sum_{x\in\Omega} P(x)=1$,
we see that $P(x)=0$ for every $x\in\Omega\setminus\{a\}$.
Hence,
it follows from the result~(i) of Theorem~\ref{zero_probability} that
$\alpha$ does not contain $x$ for every $x\in\Omega\setminus\{a\}$.
This implies that $\alpha$ consists only of $a$, as desired.
\end{proof}

\begin{theorem}\label{contraction}
Let $\Omega$ be an alphabet, and let $P\in\PS(\Omega)$.
Let $\alpha$ be a Schnorr $P$-random infinite sequence over $\Omega$, and
let $a$ and $b$ be distinct elements
of
$\Omega$.
Suppose that $\beta$ is an infinite sequence
over $\Omega\setminus\{b\}$
obtained by replacing all occurrences of $b$ by $a$ in $\alpha$.
Then $\beta$ is
Schnorr $Q$-random,
where $Q\in\PS(\Omega\setminus\{b\})$ such that $Q(x):=P(a)+P(b)$ if $x=a$ and $Q(x):=P(x)$ otherwise.
\end{theorem}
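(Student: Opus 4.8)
The plan is to prove the contrapositive: assuming $\beta$ is \emph{not} Schnorr $Q$-random, I will build a Schnorr $P$-test that $\alpha$ fails, contradicting the Schnorr $P$-randomness of $\alpha$. The bridge between the two sequences is the length-preserving, coordinatewise ``merging'' map $\varphi$ that rewrites every occurrence of $b$ as $a$; it acts on both $\Omega^*$ and $\Omega^\infty$, and $\beta=\varphi(\alpha)$ by hypothesis. For a string $\tau$ over $\Omega\setminus\{b\}$, I write $\varphi^{-1}_*(\tau):=\{\sigma\in\Omega^*\mid\abs{\sigma}=\abs{\tau}\text{ and }\varphi(\sigma)=\tau\}$ for its finite preimage, obtained by letting each position of $\tau$ that holds $a$ range over $\{a,b\}$ while every other position is forced; thus $\varphi^{-1}_*(\tau)$ is uniformly computable from $\tau$ and $\osg{\varphi^{-1}_*(\tau)}=\varphi^{-1}(\osg{\tau})$.

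The heart of the matter is the measure identity
\[
  \Bm{P}{\varphi^{-1}(\osg{\tau})}=Q(\tau)=\Bm{Q}{\osg{\tau}}.
\]
I would verify it by expanding $\sum_{\sigma\in\varphi^{-1}_*(\tau)}P(\sigma)$ as a product over the positions of $\tau$: each $a$-position contributes the factor $P(a)+P(b)=Q(a)$ and every other position contributes $P(\tau(i))=Q(\tau(i))$, so the product factors as $\prod_i Q(\tau(i))=Q(\tau)$. To pass from single cylinders to the open sets $\osg{\mathcal{D}_n}$ produced by a test, I first apply Lemma~\ref{impose-prefix-freeness} to replace the given test by one whose every section $\mathcal{D}_n$ is prefix-free, leaving each $\osg{\mathcal{D}_n}$ unchanged. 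For a prefix-free $\mathcal{D}_n$ the set $\mathcal{C}_n:=\bigcup_{\tau\in\mathcal{D}_n}\varphi^{-1}_*(\tau)$ is again prefix-free, since a prefix relation between two preimages would project under $\varphi$ to a prefix relation between their images in $\mathcal{D}_n$; hence the cylinders are pairwise disjoint on both sides and
\[
  \Bm{P}{\osg{\mathcal{C}_n}}=\sum_{\sigma\in\mathcal{C}_n}P(\sigma)=\sum_{\tau\in\mathcal{D}_n}Q(\tau)=\Bm{Q}{\osg{\mathcal{D}_n}},
\]
while $\osg{\mathcal{C}_n}=\varphi^{-1}(\osg{\mathcal{D}_n})$.

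With these in hand, given a Schnorr $Q$-test $\mathcal{D}$ that $\beta$ fails (so $\beta\in\osg{\mathcal{D}_n}$ for all $n$), I set $\mathcal{C}:=\{(n,\sigma)\mid n\in\N^+\text{ and }\sigma\in\mathcal{C}_n\}$. Then $\mathcal{C}$ is r.e.\ because $\mathcal{D}$ is r.e.\ and $\varphi^{-1}_*$ is computable; the bound $\Bm{P}{\osg{\mathcal{C}_n}}=\Bm{Q}{\osg{\mathcal{D}_n}}<2^{-n}$ is immediate; and, since for each $n$ these two measures are \emph{numerically equal}, the recursive function witnessing uniform computability of $\Bm{Q}{\osg{\mathcal{D}_n}}$ witnesses it for $\Bm{P}{\osg{\mathcal{C}_n}}$ verbatim. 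So $\mathcal{C}$ is a Schnorr $P$-test. Finally $\alpha\in\varphi^{-1}(\osg{\mathcal{D}_n})=\osg{\mathcal{C}_n}$ for every $n$, because $\varphi(\alpha)=\beta\in\osg{\mathcal{D}_n}$; thus $\alpha$ fails $\mathcal{C}$, the desired contradiction.

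The step I expect to be the main obstacle is securing the \emph{exact} equality of the two measures on the test sets, rather than a one-sided bound, together with the preservation of the Schnorr (uniform-computability) condition through the pullback. Reducing to prefix-free sections via Lemma~\ref{impose-prefix-freeness} is precisely what converts the measure comparison into the clean factorized sum above and lets the same witnessing function transfer unchanged; this is also why the argument needs neither $P$ nor $Q$ to be computable, matching the hypotheses of the theorem.
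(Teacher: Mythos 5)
Your proposal is correct and follows essentially the same route as the paper's proof: a contrapositive argument that pulls back a Schnorr $Q$-test along the merging map, using Lemma~\ref{impose-prefix-freeness} to get prefix-free sections, the factorized identity $\Bm{P}{\osg{F(\sigma)}}=Q(\sigma)=\Bm{Q}{\osg{\sigma}}$ (your $\varphi^{-1}_*(\tau)$ is exactly the paper's $F(\sigma)$), and the observation that the exact numerical equality of the section measures lets the uniform-computability witness transfer verbatim. The only cosmetic difference is that you explicitly verify prefix-freeness of the pulled-back sections, whereas the paper uses prefix-freeness of $\mathcal{S}_n$ directly to justify the disjointness of the cylinders in the summation; the content is the same.
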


\begin{proof}
We show the contraposition.
Suppose that $\beta$ is not Schnorr $Q$-random,
Then
it follows from Lemma~\ref{impose-prefix-freeness} that
there exists a
Schnorr
$Q$-test $\mathcal{S}\subset\N^+\times(\Omega\setminus\{b\})^*$
such that
$\mathcal{S}_n$ is a prefix-free subset of $\Omega^*$ and
\begin{equation}\label{betainosgmSn_LemmaLLN}
  \beta\in\osg{\mathcal{S}_n}
\end{equation}
for every $n\in\N^+$.
For each $\sigma\in(\Omega\setminus\{b\})^*$,
let $F(\sigma)$ be the set of all $\tau\in\Omega^*$ such that
$\tau$ is obtained by replacing some or none of the occurrences of $a$ in $\sigma$, if exists, by $b$.
Note that if $\sigma$ has exactly $n$ occurrences of $a$ then $\#F(\sigma)=2^n$.
Then, since $Q(a)=P(a)+P(b)$,
using \eqref{pBm}
we have that
\begin{equation}\label{BmQ=Q=P=BmP_LemmaLLN}
  \Bm{Q}{\osg{\sigma}}=Q(\sigma)=P(F(\sigma))=\Bm{P}{\osg{F(\sigma)}}
\end{equation}
for each $\sigma\in(\Omega\setminus\{b\})^*$.
We then define $\mathcal{T}$ to be a subset of $\N^+\times \Omega^*$ such that
$\mathcal{T}_n=\bigcup_{\sigma\in\mathcal{S}_n} F(\sigma)$ for every $n\in\N^+$.
For each $n\in\N^+$, we
see that
\begin{equation}\label{eq:BmPTn=SnBmPF=SnBmQs=BmQSn}
  \Bm{P}{\osg{\mathcal{T}_n}}
  =\sum_{\sigma\in\mathcal{S}_n}\Bm{P}{\osg{F(\sigma)}}
  =\sum_{\sigma\in\mathcal{S}_n}\Bm{Q}{\osg{\sigma}}
  =\Bm{Q}{\osg{\mathcal{S}_n}},
\end{equation}
where the first and
last
equalities follow from the prefix-freeness of $\mathcal{S}_n$ and
the second equality follows from \eqref{BmQ=Q=P=BmP_LemmaLLN}.
Moreover, since $\mathcal{S}$ is r.e., $\mathcal{T}$ is also r.e.
Thus, since $\mathcal{S}$ is a Martin-L\"of $Q$-test,
it follows from \eqref{eq:BmPTn=SnBmPF=SnBmQs=BmQSn}
that $\mathcal{T}$ is a Martin-L\"of $P$-test.
Furthermore,
since $\mathcal{S}$ is a Schnorr $Q$-test,
it follows again from \eqref{eq:BmPTn=SnBmPF=SnBmQs=BmQSn} that
$\Bm{P}{\osg{\mathcal{T}_n}}$ is uniformly compuatble in $n$.
Thus, $\mathcal{T}$ is a Schnorr $P$-test.

On the other hand,
note that, for every $n\in\N^+$, if $\beta\in\osg{\mathcal{S}_n}$ then $\alpha\in\osg{\mathcal{T}_n}$.
Thus, it follows from \eqref{betainosgmSn_LemmaLLN}
that $\alpha\in\osg{\mathcal{T}_n}$ for every $n\in\N^+$.
Hence, $\alpha$ is not Schnorr $P$-random.
This completes the proof.
\end{proof}

Note that
we do not require the underlying finite probability space $P$ to be computable at all
in Theorems~\ref{zero_probability}, \ref{one_probability}, and \ref{contraction}.

In order to prove Theorem~\ref{EffectiveLLN0}, we
also
need the following theorem, Chernoff bound.
This form of Chernoff bound follows from Theorem~4.2 of Motwani and Raghavan~\cite{MR95}.

\begin{theorem}[Chernoff bound]\label{Chernoff_bound}
Let $P\in\PS(\{0,1\})$ with $0<P(1)<1$,
and let
$\varepsilon\in\R$
with $0<\varepsilon\le \min\{P(0),P(1)\}$.
Then, for every $n\in\N^+$, we have
$$\Bm{P}{\osg{S_n}}<2\exp(-\varepsilon^2 n/2),$$
where $S_n$ is the set of all $\sigma\in\{0,1\}^n$ such that
$\abs{N_1(\sigma)/n-P(1)}>\varepsilon$.
\qed
\end{theorem}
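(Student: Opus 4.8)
The plan is to reduce the statement to a standard multiplicative Chernoff bound by first reinterpreting $\Bm{P}{\osg{S_n}}$ probabilistically, and then to arrange the two-sided deviation as two \emph{lower}-tail events, so that the favorable constant $1/2$ is available on both sides.

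First I would record the measure-theoretic reinterpretation. Since every string in $S_n$ has length exactly $n$, the cylinders $\osg{\sigma}$ with $\sigma\in S_n$ are pairwise disjoint, so by \eqref{pBm} we have $\Bm{P}{\osg{S_n}}=\sum_{\sigma\in S_n}P(\sigma)$. Reading the coordinates of a sequence as independent trials with common distribution $P$, this is precisely the probability, under $\lambda_P$, that among the first $n$ coordinates the count $N_1$ of $1$'s satisfies $\abs{N_1/n-P(1)}>\varepsilon$; here $N_1$ is a sum of $n$ independent $\{0,1\}$-valued trials each equal to $1$ with probability $P(1)$, and hence has mean $nP(1)$.

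Next I would split the event and, crucially, write both parts as lower-tail events. Writing $N_0=n-N_1$ for the count of $0$'s and setting $p=P(1)$, $q=P(0)=1-p$, the event $N_1/n-p>\varepsilon$ is identical to $N_0<nq-\varepsilon n=nq(1-\varepsilon/q)$, while $N_1/n-p<-\varepsilon$ is identical to $N_1<np(1-\varepsilon/p)$. Thus the two contributions to $\Bm{P}{\osg{S_n}}$ are a deviation of $N_0$ below its mean $nq$ with relative gap $\delta_0=\varepsilon/q$, and a deviation of $N_1$ below its mean $np$ with relative gap $\delta_1=\varepsilon/p$. The hypothesis $\varepsilon\le\min\{P(0),P(1)\}$ is exactly what guarantees $\delta_0\le 1$ and $\delta_1\le 1$, placing both deviations in the admissible range of the multiplicative lower-tail Chernoff bound, namely $\Prob[X<(1-\delta)\mu]<\exp(-\mu\delta^2/2)$ valid for $0<\delta\le1$ (this is the content supplied by Theorem~4.2 of~\cite{MR95}).

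Finally I would apply that bound termwise and simplify. It yields $\exp(-np(\varepsilon/p)^2/2)=\exp(-n\varepsilon^2/(2p))$ for the $N_1$ part and $\exp(-n\varepsilon^2/(2q))$ for the $N_0$ part. Since $0<p<1$ and $0<q<1$, both $1/(2p)$ and $1/(2q)$ strictly exceed $1/2$, so each term is strictly smaller than $\exp(-\varepsilon^2 n/2)$; adding the two bounds gives $\Bm{P}{\osg{S_n}}<2\exp(-\varepsilon^2 n/2)$, as required. The one place demanding care is the bookkeeping of constants: the target factor $1/2$ in the exponent forces both deviations to be handled by the \emph{lower}-tail bound, since the upper-tail multiplicative bound yields only a $1/3$ (and, as $p\to 1$, even the sharp upper-tail form fails to reach $1/2$). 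This is precisely why I would rewrite the upper deviation of $N_1$ as a lower deviation of $N_0$ rather than applying an upper-tail inequality directly.
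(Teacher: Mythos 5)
Your proof is correct, and it follows essentially the route the paper intends: the paper gives no proof of this theorem beyond the attribution to Theorem~4.2 of Motwani and Raghavan, which is exactly the lower-tail bound $\Prob[X<(1-\delta)\mu]<\exp(-\mu\delta^2/2)$ for $0<\delta\le 1$ that you invoke. Your derivation — identifying $\Bm{P}{\osg{S_n}}$ with the two-sided deviation probability via disjoint length-$n$ cylinders, recasting the upper deviation of $N_1$ as a lower deviation of $N_0$ so that both tails fall under that single lower-tail bound (with the hypothesis $\varepsilon\le\min\{P(0),P(1)\}$ supplying $\delta\le 1$), and then using $0<p,q<1$ to absorb the factors $1/(2p),1/(2q)$ into $1/2$ — is precisely the bookkeeping the paper leaves implicit in the phrase ``follows from Theorem~4.2.''
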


Moreover, we need the following lemma to prove Theorem~\ref{EffectiveLLN0}.
This lemma is Lemma~53 of Tadaki~\cite{T21arXiv}.

\begin{lemma}[Tadaki~\cite{T21arXiv}]\label{EffectiveLLN-lemma}
Let $\epsilon$ be a positive rational, and let $L\in\N^+$.
Then there exists a total recursive function $g\colon\N^+\to\N^+$ such that
for every $m\in\N^+$ it holds that $g(m)\ge L$ and
\begin{equation}\label{EffectiveLLN-lemma-Result}
  \sum_{n=g(m)}^\infty\sum_{k=f(n)}^\infty \exp(-k/n^2)<2^{-m-1},
\end{equation}
where $f$ denotes a function  $f\colon\N^+\to\N^+$ defined by
$f(n)=\left\lceil n^{2+\epsilon} \right\rceil$.
\qed
\end{lemma}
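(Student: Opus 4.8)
The plan is to replace the transcendental double sum by an explicit rational envelope whose tail can be inverted in closed form, and then to read off $g$ directly from that envelope.

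First I would dispatch the inner sum over $k$. For fixed $n$, writing $r=\exp(-1/n^2)$, the sum $\sum_{k=f(n)}^\infty\exp(-k/n^2)$ is the geometric tail $r^{f(n)}/(1-r)$. Since $f(n)=\lceil n^{2+\epsilon}\rceil\ge n^{2+\epsilon}$, the numerator obeys $r^{f(n)}=\exp(-f(n)/n^2)\le\exp(-n^{\epsilon})$; and the elementary inequality $1-\exp(-x)\ge x/2$ for $0<x\le 1$ (itself a consequence of $\exp(-x)\le 1-x+x^2/2$, which holds for all $x\ge 0$) applied with $x=1/n^2$ bounds the denominator below by $1/(2n^2)$. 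Combining the two estimates gives $\sum_{k=f(n)}^\infty\exp(-k/n^2)\le 2n^2\exp(-n^{\epsilon})$ for every $n\in\N^+$.

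Next I would find a summable envelope for the outer sum with explicit constants. Fixing the integer $j=\lceil 4/\epsilon\rceil$, so that $\epsilon j\ge 4$, the trivial lower bound $\exp(y)\ge y^j/j!$ at $y=n^{\epsilon}$ yields $\exp(-n^{\epsilon})\le j!\,n^{-\epsilon j}$, hence $2n^2\exp(-n^{\epsilon})\le 2\,j!\,n^{2-\epsilon j}\le 2\,j!\,n^{-2}$. Comparing $\sum_{n=N}^\infty n^{-2}$ with $\int_{N-1}^\infty x^{-2}\,dx=1/(N-1)$ then gives, for every integer $N\ge 2$,
\[
  \sum_{n=N}^\infty\sum_{k=f(n)}^\infty\exp(-k/n^2)\le\frac{2\,j!}{N-1}.
\]

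Finally I would set $g(m)=\max\{L,\,2^{m+2}\,j!+2\}$. With $j$ and $L$ fixed, this is plainly primitive recursive in $m$, satisfies $g(m)\ge L$, and always has $g(m)\ge 2$ so that the tail estimate applies. When $g(m)=2^{m+2}j!+2$ one has $g(m)-1>2^{m+2}j!$, so the displayed bound is strictly below $2\,j!/(2^{m+2}j!)=2^{-m-1}$; and if $g(m)=L$ is the larger value, monotonicity of the tail in $N$ only sharpens the estimate, so \eqref{EffectiveLLN-lemma-Result} holds in either case. The only step needing genuine care is the choice of envelope in the two preceding paragraphs: the inequalities $1-\exp(-x)\ge x/2$ and $\exp(-n^{\epsilon})\le j!\,n^{-\epsilon j}$ are exactly what convert the transcendental sum into the rational bound $2\,j!/(N-1)$, which can then be inverted explicitly to exhibit a recursive $g$. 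Everything else is routine.
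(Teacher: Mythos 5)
Your proof is correct. Every step checks out: the geometric tail identity, the bound $1-e^{-x}\ge x/2$ on $(0,1]$ (valid via $e^{-x}\le 1-x+x^2/2$), the Taylor-term bound $e^{-n^{\epsilon}}\le j!\,n^{-\epsilon j}$ with $j=\lceil 4/\epsilon\rceil$ forcing $2-\epsilon j\le -2$, the integral comparison $\sum_{n\ge N}n^{-2}\le 1/(N-1)$ for $N\ge 2$, and the closed-form choice $g(m)=\max\{L,\,2^{m+2}j!+2\}$, which handles both cases of the maximum and yields the required strict inequality. However, your route differs from the paper's. This paper defers the proof of the lemma to Tadaki's earlier work, but its in-paper analogue (the proof of Lemma~\ref{EffectiveSLLN-lemma} in Section~\ref{Section:EffectiveSLLN}) shows the author's method: the inner geometric tail is bounded via the mean value theorem, giving an envelope of the form $c\,e^{1/c}\,n^2 e^{-n^{\epsilon}/c}$, and the outer sum is then controlled by comparison with the improper integral $\int_N^\infty u^2 e^{-u^{\epsilon}/c}\,du$, which is evaluated by substitution as a multiple of the incomplete gamma function $\Gamma(3/\epsilon,N^{\epsilon}/c)$. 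That approach keeps the true stretched-exponential decay of the tail, so the resulting threshold function can be taken much smaller, and the gamma-function tail estimates are what make the bound effective there. You instead deliberately degrade $e^{-n^{\epsilon}}$ to a polynomial $j!\,n^{-\epsilon j}$, which loses sharpness --- your $g(m)\approx 2^{m+2}\lceil 4/\epsilon\rceil!$ grows exponentially in $m$ with a constant that blows up as $\epsilon\to 0$ --- but buys complete elementarity (no special functions, no convergence-rate analysis of $\Gamma(x,y)$) and makes the recursiveness of $g$ utterly transparent, since $g$ is given in closed form rather than extracted from an effective tail estimate. For the purposes of this lemma, where any total recursive $g$ suffices and no growth bound on $g$ is claimed, your coarser envelope is a legitimate and arguably cleaner way to the same conclusion.
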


For the proof of Lemma~\ref{EffectiveLLN-lemma}, see Tadaki~\cite{T21arXiv}.
Then Theorem~\ref{EffectiveLLN0} is proved as follows.
This proof is an elaboration of the proof of Theorem~51 of Tadaki~\cite{T21arXiv},
which is the corresponding original result
for
Martin-L\"of $P$-randomness
in Tadaki~\cite{T21arXiv}.

\begin{proof}[Proof of Theorem~\ref{EffectiveLLN0}]
Let $\Omega$ be an alphabet, and let $P\in\PS(\Omega)$.
Suppose that $P$ is computable. Let $\alpha\in\Omega^\infty$.
Suppose that $\alpha$ is Schnorr $P$-random.
Let $\varepsilon$ be an arbitrary positive real.
Let $a$ be an arbitrary element of $\Omega$.
We first show the following statement:
There exists $M\in\N^+$ such that for every $n\ge M$ and every $k\in\N^+$ if $k\ge n^{2+\varepsilon}$ then
\begin{equation}\label{EffectiveLLN-eq01}
  \abs{\frac{N_a(\rest{\alpha}{k})}{k}-P(a)}<\frac{1}{n}.
\end{equation}

In the case of $P(a)=0$, the statement~\eqref{EffectiveLLN-eq01} follows immediately from
the result~(i)
of Theorem~\ref{zero_probability}.
In the case of $P(a)=1$,
the statement~\eqref{EffectiveLLN-eq01} follows immediately from Theorem~\ref{one_probability}.
Thus we assume that $0<P(a)<1$, in what follows.

We define
$Q\in\PS(\{0,1\})$ by the condition that
$Q(1):=P(a)$ and $Q(0):=1-P(a)$.
Then
$0<Q(1)<1$.
Let $\beta$ be the infinite binary sequence obtained from $\alpha$
by replacing all $a$ by $1$ and
all other elements of $\Omega$
by $0$ in $\alpha$.
Then, by using Theorem~\ref{contraction}
repeatedly, it is easy to show that
$\beta$ is Schnorr $Q$-random and
$N_1(\rest{\beta}{k})=N_a(\rest{\alpha}{k})$ for every $k\in\N^+$.
Note also that $Q(1)$ is a computable real
and $Q$ is a computable finite probaility space,
since $P$ is a computable finite probability space.

We choose any specific $n_0\in\N^+$ such that
\[
  \frac{2}{n_0}\le \min\{Q(0),Q(1)\}.
\]
This is possible since $0<Q(1)<1$.
Then, it follows from Theorem~\ref{Chernoff_bound} that
\begin{equation}\label{EffectiveLLN-rCb}
  \Bm{Q}{\osg{\{\sigma\in\{0,1\}^k\mid\abs{N_1(\sigma)/k-Q(1)}>2/n\}}}<2\exp(-k/n^2)
\end{equation}
for every $n\ge n_0$ and every $k\in\N^+$.
We choose any specific positive rational $\delta$ such that $\varepsilon\ge 2\delta$, and
define a function $f\colon\N^+\to\N^+$ by $f(n):=\left\lceil n^{2+\delta} \right\rceil$.
Then it follows from Lemma~\ref{EffectiveLLN-lemma} that
there exists a total recursive function $g\colon\N^+\to\N^+$ such that
for every $m\in\N^+$ it holds that $g(m)\ge n_0$ and
\begin{equation}\label{EffectiveLLN0-lemma-Result-referred}
  \sum_{n=g(m)}^\infty\sum_{k=f(n)}^\infty \exp(-k/n^2)<2^{-m-1}.
\end{equation}
For each $m\in\N^+$,
we define a subset $S(m)$ of $\X$ by
\begin{equation}\label{EffectiveLLN0-def-S(N)}
  S(m):=\bigcup_{n=g(m)}^\infty\bigcup_{k=f(n)}^\infty\{\sigma\in\{0,1\}^k\mid\abs{N_1(\sigma)/k-Q(1)}>2/n\}.
\end{equation}
Then, for each $m\in\N^+$, we see that
\begin{equation}\label{EffectiveLLN-ineq-lQ<ss2xk2n2<2N6}
\begin{split}
  \Bm{Q}{\osg{S(m)}}
  &\le\sum_{n=g(m)}^\infty\sum_{k=f(n)}^\infty
         \Bm{Q}{\osg{\{\sigma\in\{0,1\}^k\mid\abs{N_1(\sigma)/k-Q(1)}>2/n\}}} \\
  &<\sum_{n=g(m)}^\infty\sum_{k=f(n)}^\infty 2\exp(-k/n^2) \\
  &<2^{-m},
\end{split}
\end{equation}
where the second inequality follows from the inequality~\eqref{EffectiveLLN-rCb} and the fact that $g(m)\ge n_0$,
and the last inequality follows from the inequality~\eqref{EffectiveLLN0-lemma-Result-referred}.

Now, we denote
the set
$\{(m,\sigma)\in\N^+\times\X\mid\sigma\in S(m)\}$
by $\mathcal{T}$.
We will show that $\mathcal{T}$ is a Schnorr $Q$-test.
For that purpose, we first show that $\mathcal{T}$ is a Martin-L\"{o}f $Q$-test.
On the one hand,
it is easy to see that
the function $f\colon\N^+\to\N^+$ is a total recursive function.
Moreover, note that
\[
  S(m)=\bigcup_{n=g(m)}^\infty\bigcup_{k=f(n)}^\infty
  \{\sigma\in\{0,1\}^k\mid N_1(\sigma)/k+2/n<Q(1)\;\text{ or }\;Q(1)<N_1(\sigma)/k-2/n\}
\]
for every
$m\in\N^+$.
Thus, since $Q(1)$ is
a computable real,
it follows that
$\mathcal{T}$ is an r.e.~set.
On the other hand,
\eqref{EffectiveLLN-ineq-lQ<ss2xk2n2<2N6} implies that
$\Bm{Q}{\osg{\mathcal{T}_m}}<2^{-m}$ for every $m\in\N^+$,
where $\mathcal{T}_m$ denotes the set
$\left\{\,
  \sigma\mid (m,\sigma)\in\mathcal{T}
\,\right\}$.
Hence, $\mathcal{T}$ is a Martin-L\"{o}f $Q$-test.

We will then show that $\Bm{Q}{\osg{\mathcal{T}_m}}$ is uniformly compuatble in $m$.
For that purpose, we show the following three fact.
First, note that the total recursive function $g\colon\N^+\to\N^+$ is unbounded
since it satisfies \eqref{EffectiveLLN0-lemma-Result-referred}.
Therefore it follows again from \eqref{EffectiveLLN0-lemma-Result-referred} that
there exists a total recursive function $p\colon\N^+\times\N\to\N^+$ such that
$g(p(m,l))\ge g(m)$ and
\begin{equation}\label{Schnorr-EffectiveLLN-sumsum}
  \sum_{n=g(p(m,l))}^\infty\sum_{k=f(n)}^\infty \exp(-k/n^2)<2^{-l-3}
\end{equation}
for every $m\in\N^+$ and $l\in\N$.
Thus, for each $m\in\N^+$ and $l\in\N$,
we have that
\begin{equation}\label{Schnorr-EffectiveLLN-ineq2}
\begin{split}
  &\Bm{Q}{\osg{\bigcup_{n=g(p(m,l))}^\infty\bigcup_{k=f(n)}^\infty
  \{\sigma\in\{0,1\}^k\mid\abs{N_1(\sigma)/k-Q(1)}>2/n\}}} \\
  &\le\sum_{n=g(p(m,l))}^\infty\sum_{k=f(n)}^\infty
  \Bm{Q}{\osg{\{\sigma\in\{0,1\}^k\mid\abs{N_1(\sigma)/k-Q(1)}>2/n\}}} \\
  &<\sum_{n=g(p(m,l))}^\infty\sum_{k=f(n)}^\infty 2\exp(-k/n^2) \\
  &<2^{-l-2},
\end{split}
\end{equation}
where the second inequality follows from the inequality~\eqref{EffectiveLLN-rCb} and the fact that $g(p(m,l))\ge g(m)\ge n_0$,
and the last inequality follows from the inequality~\eqref{Schnorr-EffectiveLLN-sumsum}.

Second,
for each $n\in\N^+$,
using the mean value theorem, we have that
\begin{equation}\label{Schnorr-EffectiveLLN-MVT1}
  1-\exp(-1/n^2)>\exp(-1/n^2)\frac{1}{n^2}.
\end{equation}
Thus, for each $L\in\N^+$, we
have
that
\begin{equation}\label{Schnorr-EffectiveLLN-ineq01}
  \sum_{k=L+1}^\infty \exp(-k/n^2)
  =\frac{\exp\{-(L+1)/n^2\}}{1-\exp(-1/n^2)}
  <n^2\exp(-L/n^2),
\end{equation}
where the last inequality follows from the inequality~\eqref{Schnorr-EffectiveLLN-MVT1}.
It is easy to see that
there exists a total recursive function $q\colon\N^+\times\N\to\N^+$ such that
\begin{equation}\label{Schnorr-EffectiveLLN-def01}
  g(p(m,l))^3\exp\{-q(m,l)/g(p(m,l))^2\}<2^{-l-3}
\end{equation}
for every $m\in\N^+$ and $l\in\N$.
Thus, for each $m\in\N^+$ and $l\in\N$,
we have that
\begin{equation}\label{Schnorr-EffectiveLLN-ineq3}
\begin{split}
  &\Bm{Q}{\osg{\bigcup_{n=g(m)}^{g(p(m,l))}\bigcup_{k=q(m,l)+1}^\infty
  \{\sigma\in\{0,1\}^k\mid\abs{N_1(\sigma)/k-Q(1)}>2/n\}}} \\
  &\le\sum_{n=g(m)}^{g(p(m,l))}\sum_{k=q(m,l)+1}^\infty
  \Bm{Q}{\osg{\{\sigma\in\{0,1\}^k\mid\abs{N_1(\sigma)/k-Q(1)}>2/n\}}} \\
  &<\sum_{n=g(m)}^{g(p(m,l))}\sum_{k=q(m,l)+1}^\infty 2\exp(-k/n^2) \\
  &<2^{-l-2},
\end{split}
\end{equation}
where the second inequality follows from the inequality~\eqref{EffectiveLLN-rCb} and the fact that $g(h(m,l))\ge g(m)\ge n_0$,
and the last inequality follows from the inequalities~\eqref{Schnorr-EffectiveLLN-ineq01}
and \eqref{Schnorr-EffectiveLLN-def01}.

Third, since $Q(1)$ is
computable,
it follows that
one can effectively calculate the finite set
$\{\sigma\in\{0,1\}^k\mid\abs{N_1(\sigma)/k-Q(1)}>2/n\}$,
given $n\in\N^+$ and $k\in\N^+$.
This is obvious in the case of $Q(1)\in\Q$.
This also holds true in the case of $Q(1)\notin\Q$
because, in such a case,
either $\abs{N_1(\sigma)/k-Q(1)}>2/n$ or $\abs{N_1(\sigma)/k-Q(1)}<2/n$ 
holds for each $n\in\N^+$, $k\in\N^+$, and $\sigma\in\{0,1\}^k$,
and moreover $Q(1)$ is computable.
Thus, given $m\in\N^+$ and $l\in\N$, one can effectively calculate the finite set
\[
  \bigcup_{n=g(m)}^{g(p(m,l))}\bigcup_{k=f(n)}^{q(m,l)}
  \{\sigma\in\{0,1\}^k\mid\abs{N_1(\sigma)/k-Q(1)}>2/n\}.
\]
Hence,
since $Q$ is a computable finite probability space,
it is easy to show that
there exists a total recursive function $r\colon\N^+\times\N\to\Q$ such that
\begin{equation}\label{Schnorr-EffectiveLLN-Approx}
  \abs{\Bm{Q}{\osg{\bigcup_{n=g(m)}^{g(p(m,l))}\bigcup_{k=f(n)}^{q(m,l)}
  \{\sigma\in\{0,1\}^k\mid\abs{N_1(\sigma)/k-Q(1)}>2/n\}}}
  -r(m,l)}<2^{-l-2}
\end{equation}
for every $m\in\N^+$ and $l\in\N$.

Now, from the definition~\eqref{EffectiveLLN0-def-S(N)}, we see that
\begin{align*}
  S(m)=&\bigcup_{n=g(m)}^{g(p(m,l))}\bigcup_{k=f(n)}^{q(m,l)}\{\sigma\in\{0,1\}^k\mid\abs{N_1(\sigma)/k-Q(1)}>2/n\} \\
  &\cup\bigcup_{n=g(m)}^{g(p(m,l))}\bigcup_{k=q(m,l)+1}^\infty\{\sigma\in\{0,1\}^k\mid\abs{N_1(\sigma)/k-Q(1)}>2/n\} \\
  &\cup\bigcup_{n=g(p(m,l))}^\infty\bigcup_{k=f(n)}^\infty\{\sigma\in\{0,1\}^k\mid\abs{N_1(\sigma)/k-Q(1)}>2/n\}
\end{align*}
for every $m\in\N^+$ and $l\in\N$.
Therefore, it follows from \eqref{Schnorr-EffectiveLLN-ineq2},
\eqref{Schnorr-EffectiveLLN-ineq3},
and \eqref{Schnorr-EffectiveLLN-Approx} that
\begin{equation*}%
  \abs{\Bm{Q}{\osg{S(m)}}-r(m,l)}<2^{-l}
\end{equation*}
for every $m\in\N^+$ and $l\in\N$.
Thus, $\Bm{Q}{\osg{\mathcal{T}_m}}$ is uniformly compuatble in $m$.
Hence, $\mathcal{T}$ is a Schnorr $Q$-test.

Thus,
since $\beta$ is Schnorr $Q$-random and $\mathcal{T}$ is a Schnorr $Q$-test,
we have that
there exists $L\in\N^+$ such that $\beta\notin\osg{\mathcal{T}_L}$.
Thus,
since $\mathcal{T}_L=S(L)$,
from the definition of $S(L)$
we have the following:
For every $n\ge g(L)$ and every $k\ge f(n)$ it holds that
\begin{equation}\label{EffectiveLLN0-eq-absNbok-Q1le2on}
  \abs{\frac{N_1(\rest{\beta}{k})}{k}-Q(1)}\le\frac{2}{n}.
\end{equation}
Then we choose any specific $M\in\N^+$ such that $M^\delta\ge 4^{2+\delta}$ and $4M\ge g(L)$.
Note that for every $n\in\N^+$ if $n\ge M$ then
$4n\ge g(L)$ and
$\left\lceil n^{2+\varepsilon}\right\rceil\ge f(4n)$.
It follows from \eqref{EffectiveLLN0-eq-absNbok-Q1le2on} that
for every $n\ge M$ and every $k\ge n^{2+\varepsilon}$ it holds that
\[
  \abs{\frac{N_1(\rest{\beta}{k})}{k}-Q(1)}<\frac{1}{n}.
\]
Thus, since $Q(1)=P(a)$ and $N_1(\rest{\beta}{k})=N_a(\rest{\alpha}{k})$ for every $k\in\N^+$,
the statement~\eqref{EffectiveLLN-eq01} holds in this case of $0<P(a)<1$, as desired.
Hence, since $a$ is an arbitrary element of $\Omega$,
the statement~\eqref{EffectiveLLN-eq01} holds for every $a\in\Omega$.

Now, for each $a\in\Omega$,
let $M_a$ be the positive integer $M$ whose existence is guaranteed in the statement~\eqref{EffectiveLLN-eq01}.
Note that $\max\{M_a\mid a\in\Omega\}$ exists, since $\Omega$ is a
non-empty
finite set. 
We denote
by $\overline{M}$
this $\max\{M_a\mid a\in\Omega\}$.
It is then easy to check that
for every $n\ge \overline{M}$ and every $k\in\N^+$ if $k\ge n^{2+\varepsilon}$ then for every $a\in\Omega$
the inequality~\eqref{EffectiveLLN0-eq} holds.
This completes the proof.
\end{proof}

\subsection{The proof of Theorem~\ref{SpeedLimitTheorem}}
\label{Proof-of-TheoremSpeedLimitTheorem}

In this subsection, we prove Theorem~\ref{SpeedLimitTheorem}.
The central limit theorem plays a key role
in its proof.

\begin{proof}[Proof of Theorem~\ref{SpeedLimitTheorem}]
Let $\Omega$ be an alphabet, and let $P\in\PS(\Omega)$.
Let $\alpha\in\Omega^\infty$ and let $a\in\Omega$.
Suppose that $0<P(a)<1$.
For each $n\in \N^+$, let $X_n$ be a random variable on
the measurable space $(\Omega^\infty,\mathcal{B}_{\Omega})$
such that $X_n(\beta)=1$ if $\beta(n)=a$ and $X_n(\beta)=0$ otherwise
for every $\beta\in\Omega^\infty$.
For each $n\in \N^+$, let $S_n:=X_1+\dots +X_n$.
Then, since $\lambda_P$ is a Bernoulli measure on
$(\Omega^\infty,\mathcal{B}_{\Omega})$,
we see that
$X_1,X_2,\dotsc$ are independent and identically distributed random variables on
the probability space~$(\Omega^\infty,\mathcal{B}_{\Omega},\lambda_P)$.
Moreover, the variance of $X_1$ equals $P(a)(1-P(a))$ and is therefore positive,
since $0<P(a)<1$.
Thus, it follows from the central limit theorem
(see Theorem~\ref{CLT} in Section~\ref{Section:EffectiveSLLN}) that
for every reals $l_1$ and $l_2$ with $l_1<l_2$ it holds that
\begin{equation*}%
  \lim_{n\to\infty}\Bm{P}{l_1\sqrt{npq}\le S_n-np\le l_2\sqrt{npq}}
  =\int_{l_1}^{l_2} \frac{1}{\sqrt{2\pi}}e^{-x^2/2}dx,
\end{equation*}
where $p$ denotes $P(a)$ and $q$ denotes $1-p$.
Note that $pq>0$.
It follows that
\begin{equation*}%
  \lim_{n\to\infty}\Bm{P}{-3\cdot 2^n\le S_{3\cdot 4^n}-3\cdot 4^n p\le 3\cdot 2^n}
  =\int_{-\sqrt{3/(pq)}}^{\sqrt{3/(pq)}}\frac{1}{\sqrt{2\pi}}e^{-x^2/2}dx<1.
\end{equation*}
Therefore there exist
a positive integer $n_0$
and a rational $r\in(0,1)$ such that
\begin{equation}\label{pr:SpeedLimitTheorem:ieq1}
  \Bm{P}{\abs{S_{3\cdot 4^n}-3\cdot 4^n p}\le 3\cdot 2^n}<r
\end{equation}
for all $n\ge n_0$.

On the other hand, by the assumption of the theorem, we have that
there exists
$n_1\in\N^+$ with $n_1\ge n_0$
such that
\begin{equation}\label{pr:SpeedLimitTheorem:eq4}
  \abs{N_a(\rest{\alpha}{4^n})-4^n p}\le 2^n
\end{equation}
for all $n\ge n_1$.

Now, let $n$ be an arbitrary integer with $n\ge n_1$.
First, note that, for each $\beta\in\Omega^\infty$,
if $\abs{S_{4^n}(\beta)-4^{n}p}\le 2^{n}$ and
$\abs{S_{4^{n+1}}(\beta)-4^{n+1}p}\le 2^{n+1}$ then
\[
  \abs{(X_{4^n+1}+\dots +X_{4^{n+1}})(\beta)-3\cdot 4^{n}p}
  =\abs{(S_{4^{n+1}}(\beta)-S_{4^n}(\beta))-3\cdot 4^{n}p}
  \le 3\cdot 2^{n}.
\]
Therefore, we have that
\begin{equation}\label{pr:SpeedLimitTheorem:eqs2}
\begin{split}
  &\Bm{P}{\bigwedge_{k=n_1}^{n+1}\abs{S_{4^k}-4^k p}\le 2^{k}} \\
  &\le \Bm{P}{\bigwedge_{k=n_1}^{n}\abs{S_{4^k}-4^k p}\le 2^{k}\;\&\;
    \abs{X_{4^n+1}+\dots +X_{4^{n+1}}-3\cdot 4^{n} p}\le 3\cdot 2^{n}} \\
  &=\Bm{P}{\bigwedge_{k=n_1}^{n}\abs{S_{4^k}-4^k p}\le 2^{k}}
    \Bm{P}{\abs{X_{4^n+1}+\dots +X_{4^{n+1}}-3\cdot 4^{n} p}\le 3\cdot 2^{n}} \\
  &=\Bm{P}{\bigwedge_{k=n_1}^{n}\abs{S_{4^k}-4^k p}\le 2^{k}}
    \Bm{P}{\abs{S_{3\cdot 4^n}-3\cdot 4^n p}\le 3\cdot 2^n} \\
  &\le \Bm{P}{\bigwedge_{k=n_1}^{n}\abs{S_{4^k}-4^k p}\le 2^{k}}r,
\end{split}
\end{equation}
where the symbol~$\wedge$, as well as $\&$, denotes the logical conjunction, and
the first and second equalities follow from the
fact
that
$X_1,X_2,\dotsc$ are independent and identically distributed random variables
on $(\Omega^\infty,\mathcal{B}_{\Omega},\lambda_P)$ and
the last  inequality follows from \eqref{pr:SpeedLimitTheorem:ieq1}.
Thus, since $n$ is an arbitrary integer with $n\ge n_1$, it follows from \eqref{pr:SpeedLimitTheorem:eqs2} that
\begin{equation}\label{pr:SpeedLimitTheorem:eq5}
  \Bm{P}{\bigwedge_{k=n_1}^{n}\abs{S_{4^k}-4^kp}\le 2^{k}}
  \le\Bm{P}{\bigwedge_{k=n_1}^{n_1}\abs{S_{4^k}-4^kp}\le 2^{k}}r^{n-n_1} \le r^{n-n_1}
\end{equation}
for each $n\ge n_1$.

Since $r$ is a rational with $0<r<1$, it is easy to show that
there exists a total recursive function $f\colon\N^+\to\N^+$ such that
$f(m)\ge n_1$ and $r^{f(m)-n_1}<2^{-m}$ for all $m\in\N^+$.
For each $m\in\N^+$, we define a subset $S(m)$ of $\Omega^*$
as the set of all finite strings $\sigma$
of  length~$4^{f(m)}$ such that
for every $k\in\N^+$ with $n_1\le k\le f(m)$ there exists a prefix~$\tau$ of $\sigma$ for which
$\abs{\tau}=4^k$ and $\abs{N_a(\tau)-4^k p}\le 2^k$ hold.
Then, for each $m\in\N^+$, it is easy to see that
\begin{equation}\label{pr:SpeedLimitTheorem:eq7}
  \osg{S(m)}=\bigcap_{k=n_1}^{f(m)}
  \osg{\left\{\sigma\in\Omega^*\,\middle\vert\,\abs{\sigma}=4^k\;\&\;\abs{N_a(\sigma)-4^k p}\le 2^k\right\}},
\end{equation}
and therefore using \eqref{pr:SpeedLimitTheorem:eq5} we have that
\begin{equation}\label{pr:SpeedLimitTheorem:eq6}
\begin{split}
  \Bm{P}{\osg{S(m)}}&=\Bm{P}{\bigcap_{k=n_1}^{f(m)}
  \osg{\left\{\sigma\in\Omega^*\,\middle\vert\,\abs{\sigma}=4^k\;\&\;\abs{N_a(\sigma)-4^k p}\le 2^k\right\}}} \\
  &=\Bm{P}{\bigwedge_{k=n_1}^{f(m)}\abs{S_{4^k}-4^kp}\le 2^{k}} \\
  &\le r^{f(m)-n_1}<2^{-m}.
\end{split}
\end{equation}
We denote the set~$\{(m,\sigma)\in\N^+\times\X\mid\sigma\in S(m)\}$ by $\mathcal{T}$.
Then
\eqref{pr:SpeedLimitTheorem:eq6} implies that
\begin{equation}\label{pr:SpeedLimitTheorem:eq8}
  \Bm{P}{\osg{\mathcal{T}_m}}<2^{-m}
\end{equation}
for every $m\in\N^+$,
where $\mathcal{T}_m$ denotes the set
$\left\{\,
  \sigma\mid (m,\sigma)\in\mathcal{T}
\,\right\}$.

Now, suppose that $P$ is computable.
Then, since
$p$, i.e., $P(a)$, is a computable real and
the function $f\colon\N^+\to\N^+$ is a total recursive function,
it is easy to see that one can effectively enumerate all the elements of the finite set $S(m)$, given $m\in\N^+$.
This
fact
is obvious in the case of $p\in\Q$.
In the case of $p\notin\Q$,
this
fact
can also be verified by noting the facts that
either $\abs{N_a(\sigma)-4^k p}< 2^k$ or $\abs{N_a(\sigma)-4^k p}> 2^k$ holds for
each $k\in\N^+$ and $\sigma\in\Omega^*$,
and moreover $p$ is computable.
Hence,
first of all,
$\mathcal{T}$ is r.e., and therefore it follows
from \eqref{pr:SpeedLimitTheorem:eq8} that $\mathcal{T}$ is Martin-L\"of $P$-test.
Moreover, since
$P$ is
a computable finite probability space,
it follows that
$\Bm{P}{\osg{S(m)}}$, i.e., $\Bm{P}{\osg{\mathcal{T}_m}}$,
is uniformly computable in $m$,
and thus $\mathcal{T}$ is Schnorr $P$-test.

Finally, using \eqref{pr:SpeedLimitTheorem:eq4} and \eqref{pr:SpeedLimitTheorem:eq7}
we have that $\alpha\in\osg{\mathcal{T}_m}$ for all $m\in\N^+$,
and therefore $\alpha$ is not Schnorr  $P$-random, as desired.
This completes the proof.
\end{proof}

\section{Effectivization of the law of large numbers for a real random variable, and its absolute speed limit of convergence}
\label{Section:EffectiveLLN=RV}

In this section,
we investigate an effectivization of the law of large numbers,
and its absolute speed limit of convergence,
for a real random variable
whose underlying process
is described as
an arbitrary Schnorr $P$-random infinite sequence,
where $P$ is
an arbitrary
computable finite probability space.

Let $\Omega$ be an alphabet, and let $P\in\PS(\Omega)$.
A \emph{real random variable} on $\Omega$ is a real function $X\colon\Omega\to\R$.
For each real random variable $X$ on $\Omega$,
we define the \emph{mean} $E(X)$ of $X$ by
\[
  E(X):=\sum_{a\in\Omega}X(a)P(a),
\]
and define the \emph{variance} $V(X)$ of $X$ by
\[
  V(X):=\sum_{a\in\Omega}(X(a)-E(X))^2 P(a).
\]

First, the following theorem
is a generalization of
Theorem~\ref{EffectiveLLN0}
over the law of large numbers for a real random variable.

\begin{theorem}
[Effectivization of the law of large numbers for a real random variable I]
\label{EffectiveLLN-RV}
Let $\Omega$ be an alphabet, and let $P\in\PS(\Omega)$.
Suppose that $P$ is computable.
Let $X$ be an arbitrary real random variable on $\Omega$.
Let $\alpha\in\Omega^\infty$. Suppose that $\alpha$ is Schnorr $P$-random.
Let $\varepsilon$ be an arbitrary positive real.
Then
there exists $M\in\N^+$ such that for every $n\ge M$ and every $k\in\N^+$ if $k\ge n^{2+\varepsilon}$ then
\begin{equation*}%
  \abs{\frac{1}{k}\sum_{i=1}^k X(\alpha(i))-E(X)}<\frac{1}{n}.
\end{equation*}
\end{theorem}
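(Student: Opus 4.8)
The plan is to reduce the statement for a real random variable $X$ to the already-proven Theorem~\ref{EffectiveLLN0}, which controls the relative frequencies $N_a(\rest{\alpha}{k})/k$ for each individual symbol $a\in\Omega$. The key observation is that the empirical average $\frac{1}{k}\sum_{i=1}^k X(\alpha(i))$ can be rewritten by grouping together the terms that contribute the same value. Since $X$ takes only finitely many values (as $\Omega$ is finite), for each $a\in\Omega$ the symbol $a$ occurs exactly $N_a(\rest{\alpha}{k})$ times among $\alpha(1),\dots,\alpha(k)$, so
\[
  \frac{1}{k}\sum_{i=1}^k X(\alpha(i))=\sum_{a\in\Omega}X(a)\frac{N_a(\rest{\alpha}{k})}{k},
  \qquad\text{while}\qquad
  E(X)=\sum_{a\in\Omega}X(a)P(a).
\]
Subtracting these identities gives
\[
  \frac{1}{k}\sum_{i=1}^k X(\alpha(i))-E(X)
  =\sum_{a\in\Omega}X(a)\left(\frac{N_a(\rest{\alpha}{k})}{k}-P(a)\right),
\]
and the proof then becomes an exercise in bounding this finite sum via the triangle inequality.

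First I would set $C:=\sum_{a\in\Omega}\abs{X(a)}+1$, a fixed positive constant depending only on $X$ and $\Omega$. Then I would apply Theorem~\ref{EffectiveLLN0} not with the target error $1/n$ but with a suitably rescaled target: concretely, I would invoke it to obtain control of the form $\abs{N_a(\rest{\alpha}{k})/k-P(a)}<1/m$ for the adjusted index $m$, where $m$ is chosen so that $C/m\le 1/n$. The cleanest route is to apply Theorem~\ref{EffectiveLLN0} once (with the same $\varepsilon$) to the sequence $\alpha$; it yields an $M_0\in\N^+$ such that for all $n\ge M_0$ and all $k\ge n^{2+\varepsilon}$ we have $\abs{N_a(\rest{\alpha}{k})/k-P(a)}<1/n$ simultaneously for every $a\in\Omega$. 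Plugging this into the triangle-inequality bound gives
\[
  \abs{\frac{1}{k}\sum_{i=1}^k X(\alpha(i))-E(X)}
  \le\sum_{a\in\Omega}\abs{X(a)}\cdot\frac{1}{n}
  <\frac{C}{n},
\]
which is of the right shape but carries the unwanted factor $C$.

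To remove that factor I would run the same estimate at a finer resolution. Let $n\ge M$ for a threshold $M$ to be fixed, and apply the frequency bound at index $\lceil Cn\rceil$ rather than $n$: for $k\ge(\lceil Cn\rceil)^{2+\varepsilon}$ the conclusion of Theorem~\ref{EffectiveLLN0} gives $\abs{N_a(\rest{\alpha}{k})/k-P(a)}<1/\lceil Cn\rceil\le1/(Cn)$ for every $a$, whence the sum is bounded by $C\cdot\frac{1}{Cn}=\frac{1}{n}$, as required. The remaining bookkeeping is to absorb the reindexing into a single threshold: I would pick a positive real $\varepsilon'<\varepsilon$, note that $(\lceil Cn\rceil)^{2+\varepsilon'}\le n^{2+\varepsilon}$ holds for all sufficiently large $n$ (since $C$ is a fixed constant and the exponent strictly increased), and apply Theorem~\ref{EffectiveLLN0} with $\varepsilon'$ in place of $\varepsilon$ to cover this shifted range. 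Taking $M$ large enough to satisfy both this polynomial comparison and the threshold coming from Theorem~\ref{EffectiveLLN0} completes the argument.

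**The main obstacle** is precisely this constant-absorption step: the naive application of Theorem~\ref{EffectiveLLN0} loses a factor of $C=\sum_a\abs{X(a)}$, and one must verify that sharpening the frequency bound by a constant factor costs only a vanishing amount in the polynomial exponent, so that the final statement retains the clean form $k\ge n^{2+\varepsilon}$ with the original $\varepsilon$. This is a routine calculation comparing $(\lceil Cn\rceil)^{2+\varepsilon'}$ with $n^{2+\varepsilon}$ for large $n$, but it is the one place where care is needed to match the exact bound claimed in the theorem rather than a weaker polynomial one.
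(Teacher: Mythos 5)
Your proposal is correct and takes essentially the same route as the paper's proof: the identical triangle-inequality decomposition $\abs{\frac{1}{k}\sum_{i=1}^k X(\alpha(i))-E(X)}\le\sum_{a\in\Omega}\abs{X(a)}\,\abs{N_a(\rest{\alpha}{k})/k-P(a)}$, followed by invoking Theorem~\ref{EffectiveLLN0} with a strictly smaller exponent at a constant-rescaled index so the factor $\sum_a\abs{X(a)}$ is absorbed into the polynomial slack (the paper uses the index $nL\#\Omega$ with $\delta\le\varepsilon/2$ where you use $\lceil Cn\rceil$ with $\varepsilon'<\varepsilon$). Your final bookkeeping step, comparing $(\lceil Cn\rceil)^{2+\varepsilon'}$ with $n^{2+\varepsilon}$ for large $n$, is precisely the paper's choice of $M$ satisfying $M^\delta\ge(L\#\Omega)^{2+\delta}$ and $ML\#\Omega\ge M_0$.
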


\begin{proof}
We choose any specific $L\in\N^+$ such that
$L\ge \abs{X(a)}$ for every $a\in\Omega$.
This can be possible since $\Omega$ is a finite set.
Then, for each $k\in\N^+$, we see that
\begin{equation}\label{Schnorr-EffectiveLLN-RV:eq1-0}
\begin{split}
\abs{\frac{1}{k}\sum_{i=1}^k X(\alpha(i))-E(X)}
&=\abs{\frac{1}{k}\sum_{a\in\Omega}N_a(\rest{\alpha}{k})X(a)-E(X)}
\le\sum_{a\in\Omega}\abs{\frac{N_a(\rest{\alpha}{k})}{k}-P(a)}\abs{X(a)} \\
&\le L\sum_{a\in\Omega}\abs{\frac{N_a(\rest{\alpha}{k})}{k}-P(a)},
\end{split}
\end{equation}
where $N_a(\rest{\alpha}{k})$ denotes the number of the occurrences of $a$ in $\rest{\alpha}{k}$
for every $a\in\Omega$ as before.

Now, we choose any specific positive real $\delta$ such that $\varepsilon\ge 2\delta$.
Then it follows from Theorem~\ref{EffectiveLLN0} that
there exists $M_0\in\N^+$ such that for every $n\ge M_0$ and every $k\in\N^+$ if $k\ge n^{2+\delta}$ then
for every $a\in\Omega$ it holds that
\begin{equation}\label{Schnorr-EffectiveLLN0-eq-RV}
  \abs{\frac{N_a(\rest{\alpha}{k})}{k}-P(a)}<\frac{1}{n}.
\end{equation}
Then we choose any specific $M\in\N^+$ such that
$M^\delta\ge (L\#\Omega)^{2+\delta}$ and $ML\#\Omega\ge M_0$.
Note that for every $n\in\N^+$ if $n\ge M$ then
$nL\#\Omega\ge M_0$ and
$n^{2+\varepsilon}\ge (nL\#\Omega)^{2+\delta}$.
Thus, for each $n\ge M$ and each $k\in\N^+$,
using \eqref{Schnorr-EffectiveLLN-RV:eq1-0} and \eqref{Schnorr-EffectiveLLN0-eq-RV}
we see that if $k\ge n^{2+\varepsilon}$ then
\[
  \abs{\frac{1}{k}\sum_{i=1}^k X(\alpha(i))-E(X)}
  \le L\sum_{a\in\Omega}\abs{\frac{N_a(\rest{\alpha}{k})}{k}-P(a)}
  < L\#\Omega\frac{1}{nL\#\Omega}=\frac{1}{n}.
\]
This completes the proof.
\end{proof}

We obtain the following corollary from Theorem~\ref{EffectiveLLN-RV}.

\begin{corollary}
[Effectivization of the law of large numbers for a real random variable II]
\label{cor:prf-EffectiveLLN-RV}
Let $\Omega$ be an alphabet, and let $P\in\PS(\Omega)$.
Suppose that $P$ is computable.
Let $X$ be an arbitrary real random variable on $\Omega$.
Let $\alpha\in\Omega^\infty$. Suppose that $\alpha$ is Schnorr $P$-random.
Then there exists a primitive recruesive function $f\colon\N\to\N$
such that for every $n\in\N^+$ and every $k\in\N^+$ if $k\ge f(n)$ then
\begin{equation}\label{prf-Schnorr-EffectiveLLN-RV-eq}
  \abs{\frac{1}{k}\sum_{i=1}^k X(\alpha(i))-E(X)}<\frac{1}{n}.
\end{equation}
\end{corollary}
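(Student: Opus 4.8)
The plan is to obtain $f$ from Theorem~\ref{EffectiveLLN-RV} by exactly the device that was used to derive Theorem~\ref{EffectiveLLN} from Theorem~\ref{EffectiveLLN0}: take the natural threshold $\lceil n^{2+\epsilon}\rceil$ and flatten it on an initial segment. First I would fix any positive rational $\epsilon$ (for concreteness, $\epsilon:=1$) and set $g(n):=\left\lceil n^{2+\epsilon}\right\rceil$. As already noted in the proof of Theorem~\ref{EffectiveLLN}, this function $g\colon\N\to\N$ is primitive recursive. Applying Theorem~\ref{EffectiveLLN-RV} to $X$, $\alpha$, and the positive real $\varepsilon:=\epsilon$ then yields some $M\in\N^+$ such that for every $n\ge M$ and every $k\in\N^+$ with $k\ge n^{2+\epsilon}$ the inequality~\eqref{prf-Schnorr-EffectiveLLN-RV-eq} holds.

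Next I would pass from the real threshold $n^{2+\epsilon}$ to the integer threshold $g(n)$. Since $k$ ranges over positive integers and $g(n)=\left\lceil n^{2+\epsilon}\right\rceil\ge n^{2+\epsilon}$, the hypothesis $k\ge g(n)$ already forces $k\ge n^{2+\epsilon}$, so the conclusion of Theorem~\ref{EffectiveLLN-RV} applies whenever $n\ge M$ and $k\ge g(n)$. I then define $f\colon\N\to\N$ by $f(n):=g(n)$ if $n\ge M$ and $f(n):=g(M)$ otherwise. Because $f$ agrees with the primitive recursive function $g$ except at the finitely many arguments $n<M$, where it takes the fixed value $g(M)$, the function $f$ is again primitive recursive.

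Finally I would verify condition~(i) for this $f$, splitting on the size of $n$. For $n\ge M$ we have $f(n)=g(n)$, so any $k\ge f(n)$ satisfies $k\ge n^{2+\epsilon}$ and the bound follows directly from the instance provided by Theorem~\ref{EffectiveLLN-RV}. The only point needing a moment's care, and the closest thing to an obstacle here, is the initial segment $1\le n<M$: there $f(n)=g(M)$, so $k\ge f(n)$ gives $k\ge M^{2+\epsilon}$, and applying the estimate at the index $M$ (legitimate since $M\ge M$) yields $\abs{\frac{1}{k}\sum_{i=1}^k X(\alpha(i))-E(X)}<1/M$; as $n<M$ implies $1/M\le 1/n$, the required bound $<1/n$ follows. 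Hence~\eqref{prf-Schnorr-EffectiveLLN-RV-eq} holds for all $n\in\N^+$ and all $k\ge f(n)$, which completes the argument.
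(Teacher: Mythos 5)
Your proposal is correct and follows essentially the same route as the paper's own proof: the paper likewise instantiates Theorem~\ref{EffectiveLLN-RV} with $\varepsilon=1$ to obtain the threshold $n^3$ and an $M$, then defines $f(n):=n^3$ for $n\ge M$ and $f(n):=M^3$ otherwise. Your explicit verification of the flattened initial segment $n<M$ (via $k\ge g(M)\ge M^{2+\epsilon}$ and $1/M\le 1/n$) just spells out a step the paper leaves implicit.
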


\begin{proof}
It follows from Theorem~\ref{EffectiveLLN-RV} that there exists $M\in\N^+$ such that
for every $n\ge M$ and every $k\in\N^+$ if $k\ge n^3$ then
the inequality~\eqref{prf-Schnorr-EffectiveLLN-RV-eq} holds.
We then define a function $f\colon\N\to\N$ by the condition that
$f(n):=n^3$ if $n\ge M$ and $f(n):=M^3$ otherwise.
Thus, since $f\colon\N\to\N$ is a primitive recursive function,
the result follows.
\end{proof}

As a specific application of Theorem~\ref{EffectiveLLN-RV},
we present an effectivization of the \emph{asymptotic equipartition property}
(AEP, for short).
The AEP plays an important role in the source coding problem in information theory,
and is a direct consequence of the weak law of large numbers
for independent, identically distributed random variables in probability theory.
The AEP is stated in terms of the notion of Shannon entropy.
Let $\Omega$ be an alphabet, and let $P\in\PS(\Omega)$. The \emph{Shannon entropy} $H(P)$ of $P$ is defined by
\begin{equation}\label{def:Shannon-entropy}
  H(P):=-\sum_{a\in\Omega}P(a)\log_2 P(a),
\end{equation}
where $0\log_2 0$ is defined to be $0$ as usual.
See Cover and Thomas~\cite[Chapter 3]{CT06} for the details
of the AEP and its applications, where the AEP is stated as Theorem~3.1.1.

The AEP
is
effectivized in terms of a Schnorr $P$-random sequence as follows:

\begin{theorem}[Effectivization of AEP]\label{effective-AEP}
Let $\Omega$ be an alphabet, and let $P\in\PS(\Omega)$.
Suppose that $P$ is computable.
Let $\alpha\in\Omega^\infty$. Suppose that $\alpha$ is Schnorr $P$-random.
Then the following (i) and (ii) hold:
\begin{enumerate}
\item $\Bm{P}{\osg{\rest{\alpha}{n}}}>0$ for every $n\in\N^+$.
\item
For every real $\varepsilon>0$,
there exists $M\in\N^+$ such that for every $n\ge M$ and every $k\in\N^+$ if $k\ge n^{2+\varepsilon}$ then
\begin{equation*}%
  \abs{\frac{-\log_2\Bm{P}{\osg{\rest{\alpha}{k}}}}{k}-H(P)}<\frac{1}{n}.
\end{equation*}
\end{enumerate}
\end{theorem}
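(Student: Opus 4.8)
The plan is to deduce both parts from results already in hand by rewriting $-\log_2\Bm{P}{\osg{\rest{\alpha}{k}}}$ as a running average of a suitable real random variable. The key observation is that, by \eqref{pBm}, we have $\Bm{P}{\osg{\rest{\alpha}{k}}}=P(\rest{\alpha}{k})=\prod_{i=1}^{k}P(\alpha(i))$, so that $-\log_2\Bm{P}{\osg{\rest{\alpha}{k}}}=-\sum_{i=1}^{k}\log_2 P(\alpha(i))$ whenever every factor is positive. Everything then reduces to an application of Theorem~\ref{EffectiveLLN-RV}, once the zero-probability symbols are handled.

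First I would prove part~(i). For each $a\in\Omega$ with $P(a)=0$, the result~(i) of Theorem~\ref{zero_probability} applies: since $\alpha$ is Schnorr $P$-random, $\alpha$ does not contain $a$. Hence every symbol occurring in $\alpha$ has positive probability, so $P(\alpha(i))>0$ for every $i\in\N^+$, and therefore $\Bm{P}{\osg{\rest{\alpha}{n}}}=\prod_{i=1}^{n}P(\alpha(i))>0$ for every $n\in\N^+$. This step also guarantees that the logarithm appearing in part~(ii) is well defined and finite along $\alpha$.

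For part~(ii) I would define a real random variable $X$ on $\Omega$ by $X(a):=-\log_2 P(a)$ when $P(a)>0$ and $X(a):=0$ when $P(a)=0$. This is a legitimate real function on the finite set $\Omega$, and a direct computation using the convention $0\log_2 0=0$ from \eqref{def:Shannon-entropy} gives $E(X)=\sum_{a\in\Omega}X(a)P(a)=-\sum_{a\in\Omega}P(a)\log_2 P(a)=H(P)$, since the symbols with $P(a)=0$ contribute $0$ to both sides. Moreover, because part~(i) shows that $\alpha$ contains no symbol of probability $0$, the truncated values of $X$ on those symbols are never used, and we obtain $\sum_{i=1}^{k}X(\alpha(i))=-\sum_{i=1}^{k}\log_2 P(\alpha(i))=-\log_2\Bm{P}{\osg{\rest{\alpha}{k}}}$ for every $k\in\N^+$.

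With these two identities established, part~(ii) follows by applying Theorem~\ref{EffectiveLLN-RV} to $X$: for every real $\varepsilon>0$ there exists $M\in\N^+$ such that for all $n\ge M$ and all $k\ge n^{2+\varepsilon}$ we have $\abs{\frac{1}{k}\sum_{i=1}^{k}X(\alpha(i))-E(X)}<1/n$; substituting $\frac{1}{k}\sum_{i=1}^{k}X(\alpha(i))=\frac{-\log_2\Bm{P}{\osg{\rest{\alpha}{k}}}}{k}$ and $E(X)=H(P)$ yields exactly the claimed inequality. I expect no genuine obstacle here beyond bookkeeping; the single point demanding care is the treatment of the zero-probability symbols, which is precisely why part~(i) is proved first, as it simultaneously ensures that the log-probability is finite and that the choice of $X$ on those symbols is irrelevant to the average taken along $\alpha$.
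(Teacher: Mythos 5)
Your proposal is correct and follows essentially the same route as the paper's own proof: part~(i) via the result~(i) of Theorem~\ref{zero_probability} together with the product formula from \eqref{pBm}, and part~(ii) by defining a log-probability random variable on $\Omega$ (set to $0$ on zero-probability symbols) and invoking Theorem~\ref{EffectiveLLN-RV}. The only difference is an immaterial sign convention --- you take $X(a)=-\log_2 P(a)$ with $E(X)=H(P)$, while the paper takes $X(a)=\log_2 P(a)$ with $E(X)=-H(P)$ --- which the absolute value renders equivalent.
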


\begin{proof}
We denote by $\Omega_e$ the set $\{a\in\Omega\mid P(a)>0\}$.
Since $\alpha$ is Schnorr $P$-random,
it follows from
the result~(i)
of Theorem~\ref{zero_probability} that
\begin{equation}\label{effective-AEP:eq1}
  \alpha(n)\in\Omega_e
\end{equation}
for every $n\in\N^+$.
Therefore, for each $n\in\N^+$, using~\eqref{pBm} we have that
\begin{equation}\label{effective-AEP:eq0}
  \Bm{P}{\osg{\rest{\alpha}{n}}}=P(\rest{\alpha}{n})=\prod_{k=1}^n P(\alpha(k))>0,
\end{equation}
as desired.

Now, we define a function $X\colon\Omega\to\R$ by the condition that
$X(a):=\log_2 P(a)$ if $a\in\Omega_e$ and $X(a):=0$ otherwise.
Let $k\in\N^+$.
On the one hand, using \eqref{effective-AEP:eq0}
and \eqref{effective-AEP:eq1}
we have that
\begin{equation*}%
\frac{\log_2\Bm{P}{\osg{\rest{\alpha}{k}}}}{k}=\frac{1}{k}\sum_{i=1}^k\log_2 P(\alpha(i))
=\frac{1}{k}\sum_{i=1}^k X(\alpha(i)).
\end{equation*}
On the other hand, we have that
\begin{equation*}%
-H(P)=\sum_{a\in\Omega}P(a)\log_2 P(a)=\sum_{a\in\Omega_e}P(a)\log_2 P(a)
=\sum_{a\in\Omega_e}X(a)P(a)=\sum_{a\in\Omega}X(a)P(a)
=E(X).
\end{equation*}
Thus, the result~(ii) of Theorem~\ref{effective-AEP} follows from
Theorem~\ref{EffectiveLLN-RV}.
\end{proof}

The following theorem generalizes Theorem~\ref{SpeedLimitTheorem}
over the law of large numbers for a real random variable.

\begin{theorem}
[Convergence speed limit theorem for a random variable]
\label{SpeedLimitTheorem-RV}
Let $\Omega$ be an alphabet, and let $P\in\PS(\Omega)$.
Suppose that $P$ is computable.
Let $\alpha\in\Omega^\infty$,
and let $X$ be a real random variable on $\Omega$.
Suppose that
$V(X)>0$
and there exists $n_0\in\N^+$ such that for every $n\ge n_0$ it holds that
\begin{equation*}%
  \abs{\sum_{i=1}^{4^n} X(\alpha(i))-4^n E(X)}\le 2^n.
\end{equation*}
Then the following (i) and (ii) hold:
\begin{enumerate}
  \item If $X(a)$ is a computable real for all $a\in\Omega$,
    then $\alpha$ is not Martin-L\"of $P$-random.
  \item If $X(a)$ is a rational for all $a\in\Omega$,
    then $\alpha$ is not Schnorr $P$-random.
\end{enumerate}
\end{theorem}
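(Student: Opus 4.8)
The plan is to mirror the proof of Theorem~\ref{SpeedLimitTheorem} essentially verbatim, replacing the indicator variables used there by the values of $X$. First I would define, for each $n\in\N^+$, the random variable $X_n$ on $(\Omega^\infty,\mathcal{B}_{\Omega})$ by $X_n(\beta):=X(\beta(n))$, and set $S_n:=X_1+\dots+X_n$. Since $\lambda_P$ is a Bernoulli measure, $X_1,X_2,\dotsc$ are independent and identically distributed on $(\Omega^\infty,\mathcal{B}_{\Omega},\lambda_P)$ with mean $E(X)$ and variance $V(X)$, and $V(X)>0$ by hypothesis. Writing $\mu:=E(X)$, the central limit theorem (Theorem~\ref{CLT}) then yields
\[
  \lim_{n\to\infty}\Bm{P}{\abs{S_{3\cdot 4^n}-3\cdot 4^n\mu}\le 3\cdot 2^n}
  =\int_{-\sqrt{3/V(X)}}^{\sqrt{3/V(X)}}\frac{1}{\sqrt{2\pi}}e^{-x^2/2}\,dx<1,
\]
because the standard deviation of $S_{3\cdot 4^n}$ is $2^n\sqrt{3V(X)}$ and the Gaussian has total mass $1$ over $\R$. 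Hence there are $n_0\in\N^+$ and a rational $r\in(0,1)$ with $\Bm{P}{\abs{S_{3\cdot 4^n}-3\cdot 4^n\mu}\le 3\cdot 2^n}<r$ for all $n\ge n_0$, exactly as in \eqref{pr:SpeedLimitTheorem:ieq1}.

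Next I would reproduce the block-independence estimate. Taking $n_1\ge n_0$ from the hypothesis so that $\abs{\sum_{i=1}^{4^n}X(\alpha(i))-4^n\mu}\le 2^n$ for all $n\ge n_1$, and using that the increment $X_{4^n+1}+\dots+X_{4^{n+1}}$ over the disjoint block is independent of $S_{4^n}$ and equidistributed with $S_{3\cdot 4^n}$, the same telescoping computation as in \eqref{pr:SpeedLimitTheorem:eqs2}--\eqref{pr:SpeedLimitTheorem:eq5} gives $\Bm{P}{\bigwedge_{k=n_1}^{n}\abs{S_{4^k}-4^k\mu}\le 2^k}\le r^{n-n_1}$ for every $n\ge n_1$. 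I would then fix a total recursive $f\colon\N^+\to\N^+$ with $f(m)\ge n_1$ and $r^{f(m)-n_1}<2^{-m}$, and define $S(m)\subset\Omega^*$ to consist of the strings $\sigma$ of length $4^{f(m)}$ such that for every $k$ with $n_1\le k\le f(m)$ the length-$4^k$ prefix $\tau$ of $\sigma$ satisfies $\abs{\sum_{i=1}^{4^k}X(\tau(i))-4^k\mu}\le 2^k$. As in \eqref{pr:SpeedLimitTheorem:eq7}--\eqref{pr:SpeedLimitTheorem:eq6}, this gives $\Bm{P}{\osg{S(m)}}\le r^{f(m)-n_1}<2^{-m}$, and, writing $\mathcal{T}:=\{(m,\sigma)\mid\sigma\in S(m)\}$, the hypothesis together with the identity for $\osg{S(m)}$ forces $\alpha\in\osg{\mathcal{T}_m}$ for every $m\in\N^+$.

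The only place the two cases diverge, and the part I expect to be the main obstacle, is the effectiveness of $\mathcal{T}$, which rests on deciding or merely semi-deciding the inequality $\abs{\sum_{i=1}^{4^k}X(\sigma(i))-4^k\mu}\le 2^k$; note that $\mu=E(X)$ is a computable real because $P$ is computable. In case (ii) each $X(a)$ is rational, so every partial sum $\sum_{i=1}^{4^k}X(\sigma(i))$ is an exactly computable rational; arguing exactly as in the treatment of $p$ in the proof of Theorem~\ref{SpeedLimitTheorem} (splitting on whether the fixed real $\mu$ is rational or not, the irrational case excluding the boundary $\pm 2^k$ since the partial sum is rational), membership in the finite set $S(m)$ is \emph{decidable}, so $\mathcal{T}$ is r.e.\ and, since $P$ is a computable finite probability space, $\Bm{P}{\osg{\mathcal{T}_m}}$ is uniformly computable in $m$; thus $\mathcal{T}$ is a Schnorr $P$-test and $\alpha$ is not Schnorr $P$-random. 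In case (i) each $X(a)$ is merely computable, so the partial sums are computable reals and only a \emph{strict} inequality between computable reals is semi-decidable; to obtain an r.e.\ test I would instead define $S(m)$ with the relaxed strict bound $\abs{\sum_{i=1}^{4^k}X(\tau(i))-4^k\mu}<2\cdot 2^k$, which still contains every length-$4^k$ prefix of $\alpha$ (as $\le 2^k<2\cdot 2^k$) and, after replacing $3\cdot 2^n$ by $6\cdot 2^n$ in the central-limit step so that the integral stays below $1$, still yields $\Bm{P}{\osg{\mathcal{T}_m}}<2^{-m}$. With this relaxation $\mathcal{T}$ is r.e., hence a Martin-L\"of $P$-test, and $\alpha$ is not Martin-L\"of $P$-random. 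The delicate point throughout is the clean handling of the equality boundary $\pm 2^k$, precisely the issue resolved in the proof of Theorem~\ref{SpeedLimitTheorem}: rationality of the $X(a)$ removes it and buys the stronger Schnorr conclusion, whereas for general computable $X(a)$ the boundary is genuinely undecidable and one settles for the weaker Martin-L\"of conclusion via the strict relaxation.
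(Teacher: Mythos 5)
Your proposal is correct and follows essentially the same route as the paper's proof of Theorem~\ref{SpeedLimitTheorem-RV}: the CLT bound on the deviation of $S_{3\cdot 4^n}$, independence of the block increments $X_{4^n+1}+\dots+X_{4^{n+1}}$, the geometric decay $r^{n-n_1}$, a recursive $f$ with $r^{f(m)-n_1}<2^{-m}$, the sets $S(m)$ and the test $\mathcal{T}$, with the boundary issue resolved exactly as you describe (rationality of the values in the Schnorr case, strict relaxation with a doubled constant in the Martin-L\"of case). The only cosmetic difference is that the paper applies the relaxed strict bound $\abs{S_{4^k}-4^k\mu}<2^{k+1}$ (with CLT threshold $3\cdot 2^{n+1}$) uniformly in both cases, so that a single test serves both conclusions, whereas you keep the tight bound $\le 2^k$ in the Schnorr case and introduce the relaxation only for the Martin-L\"of case---both variants work.
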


\begin{proof}
Let $\Omega$ be an alphabet, and let $P\in\PS(\Omega)$.
Let $\alpha\in\Omega^\infty$.
For each $n\in \N^+$, let $X_n$ be a random variable on
the measurable space $(\Omega^\infty,\mathcal{B}_{\Omega})$
such that $X_n(\beta):=X(\beta(n))$ for every $\beta\in\Omega^\infty$.
For each $n\in \N^+$, let $S_n:=X_1+\dots +X_n$.
Then, since $\lambda_P$ is a Bernoulli measure on
$(\Omega^\infty,\mathcal{B}_{\Omega})$,
we see that
$X_1,X_2,\dotsc$ are independent and identically distributed random variables on
the probability space~$(\Omega^\infty,\mathcal{B}_{\Omega},\lambda_P)$.
Moreover, the variance of $X_1$ is positive since it equals $V(X)>0$.
Thus, it follows from the central limit theorem, Theorem~\ref{CLT} below, that
for every reals $l_1$ and $l_2$ with $l_1<l_2$ it holds that
\begin{equation*}%
  \lim_{n\to\infty}\Bm{P}{l_1\sqrt{nv}\le S_n-n\mu\le l_2\sqrt{nv}}
  =\int_{l_1}^{l_2} \frac{1}{\sqrt{2\pi}}e^{-x^2/2}dx,
\end{equation*}
where $\mu:=E(X)$ and $v:=V(X)$.
Note that $v>0$.
It follows that
\begin{equation*}%
  \lim_{n\to\infty}\Bm{P}{-3\cdot 2^{n+1}\le S_{3\cdot 4^n}-3\cdot 4^n \mu\le 3\cdot 2^{n+1}}
  =\int_{-2\sqrt{3/v}}^{2\sqrt{3/v}}\frac{1}{\sqrt{2\pi}}e^{-x^2/2}dx<1.
\end{equation*}
Therefore there exist a positive integer $n_0$ and a rational $r\in(0,1)$ such that
\begin{equation}\label{pr:SpeedLimitTheorem-RV:ieq1}
  \Bm{P}{\abs{S_{3\cdot 4^n}-3\cdot 4^n \mu}< 3\cdot 2^{n+1}}<r
\end{equation}
for all $n\ge n_0$.

On the other hand, by the assumption of the theorem, we have that
there exists $n_1\in\N^+$ with $n_1\ge n_0$ such that
\begin{equation}\label{pr:SpeedLimitTheorem-RV:eq4}
  \abs{S_{4^n}(\alpha)-4^n\mu}< 2^{n+1}.
\end{equation}
for all $n\ge n_1$.

Now, let $n$ be an arbitrary integer with $n\ge n_1$.
First, note that, for each $\beta\in\Omega^\infty$,
if $\abs{S_{4^n}(\beta)-4^{n}\mu}< 2^{n+1}$ and
$\abs{S_{4^{n+1}}(\beta)-4^{n+1}\mu}< 2^{n+2}$ then
\[
  \abs{(X_{4^n+1}+\dots +X_{4^{n+1}})(\beta)-3\cdot 4^{n}\mu}
  =\abs{(S_{4^{n+1}}(\beta)-S_{4^n}(\beta))-3\cdot 4^{n}\mu}
  < 3\cdot 2^{n+1}.
\]
Therefore, we have that
\begin{equation}\label{pr:SpeedLimitTheorem-RV:eqs2}
\begin{split}
  &\Bm{P}{\bigwedge_{k=n_1}^{n+1}\abs{S_{4^k}-4^k \mu}< 2^{k+1}} \\
  &\le \Bm{P}{\bigwedge_{k=n_1}^{n}\abs{S_{4^k}-4^k \mu}< 2^{k+1}\;\&\;
    \abs{X_{4^n+1}+\dots +X_{4^{n+1}}-3\cdot 4^{n} \mu}< 3\cdot 2^{n+1}} \\
  &=\Bm{P}{\bigwedge_{k=n_1}^{n}\abs{S_{4^k}-4^k \mu}< 2^{k+1}}
    \Bm{P}{\abs{S_{3\cdot 4^n}-3\cdot 4^n \mu}< 3\cdot 2^{n+1}} \\
  &\le \Bm{P}{\bigwedge_{k=n_1}^{n}\abs{S_{4^k}-4^k \mu}< 2^{k+1}}r,
\end{split}
\end{equation}
where the symbol~$\wedge$, as well as $\&$, denotes the logical conjunction, and
the equality follows
from the
fact
that
$X_1,X_2,\dotsc$ are independent and identically distributed random variables
on $(\Omega^\infty,\mathcal{B}_{\Omega},\lambda_P)$ and
the last  inequality follows from \eqref{pr:SpeedLimitTheorem-RV:ieq1}.
Thus, since $n$ is an arbitrary integer with $n\ge n_1$, it follows from \eqref{pr:SpeedLimitTheorem-RV:eqs2} that
\begin{equation}\label{pr:SpeedLimitTheorem-RV:eq5}
  \Bm{P}{\bigwedge_{k=n_1}^{n}\abs{S_{4^k}-4^k\mu}< 2^{k+1}}
  \le\Bm{P}{\bigwedge_{k=n_1}^{n_1}\abs{S_{4^k}-4^k\mu}< 2^{k+1}}r^{n-n_1} \le r^{n-n_1}
\end{equation}
for each $n\ge n_1$.

Since $r$ is a rational with $0<r<1$, it is easy to show that
there exists a total recursive function $f\colon\N^+\to\N^+$ such that
$f(m)\ge n_1$ and $r^{f(m)-n_1}<2^{-m}$ for all $m\in\N^+$.
For each $m\in\N^+$, we define a subset $S(m)$ of $\Omega^*$
as the set of all finite strings $\sigma$
of  length~$4^{f(m)}$ such that
for every $k\in\N^+$ with $n_1\le k\le f(m)$ there exists a prefix~$\tau$ of $\sigma$ for which
$\abs{\tau}=4^k$ and
\[
  \abs{\sum_{i=1}^{4^k} X(\tau(i))-4^k \mu}< 2^{k+1}
\]
hold.
Then, for each $m\in\N^+$, it is easy to see that
\begin{equation}\label{pr:SpeedLimitTheorem-RV:eq7}
  \osg{S(m)}=\bigcap_{k=n_1}^{f(m)}
  \osg{\left\{\sigma\in\Omega^*\,\middle\vert\,\abs{\sigma}=4^k\;\&\;\abs{\sum_{i=1}^{4^k} X(\sigma(i))-4^k \mu}< 2^{k+1}\right\}}.
\end{equation}
We denote the set~$\{(m,\sigma)\in\N^+\times\X\mid\sigma\in S(m)\}$ by $\mathcal{T}$.
Then, for each $m\in\N^+$,
we have that
\begin{equation}\label{pr:SpeedLimitTheorem-RV:eq8}
  \Bm{P}{\osg{\mathcal{T}_m}}
  =\Bm{P}{\osg{S(m)}}
  =\Bm{P}{\bigwedge_{k=n_1}^{f(m)}\abs{S_{4^k}-4^k\mu}< 2^{k+1}}
  \le r^{f(m)-n_1}<2^{-m},
\end{equation}
where $\mathcal{T}_m$ denotes the set
$\left\{\,
  \sigma\mid (m,\sigma)\in\mathcal{T}
\,\right\}$, and
the second equality follows from \eqref{pr:SpeedLimitTheorem-RV:eq7} and
the first inequality follows from \eqref{pr:SpeedLimitTheorem-RV:eq5}.

Now, suppose that $P$ is computable and $X(a)$ is a computable real for all $a\in\Omega$.
Then $\mu$ is a computable real, obviously.
Thus, since the function $f\colon\N^+\to\N^+$ is a total recursive function,
from the definition of $S(m)$
it is easy to see that $\mathcal{T}$ is r.e.
Therefore it follows from \eqref{pr:SpeedLimitTheorem-RV:eq8} that
$\mathcal{T}$ is Martin-L\"of $P$-test.
Using \eqref{pr:SpeedLimitTheorem-RV:eq4} and \eqref{pr:SpeedLimitTheorem-RV:eq7}
we have that
\begin{equation}\label{pr:SpeedLimitTheorem-RV:eq9}
  \alpha\in\osg{\mathcal{T}_m}
\end{equation}
for all $m\in\N^+$,
and therefore $\alpha$ is not Martin-L\"of $P$-random, as desired.

Furthermore, suppose that $X(a)$ is a rational for all $a\in\Omega$.
Then, since $\mu$ is a computable real and
the function $f\colon\N^+\to\N^+$ is a total recursive function,
it is easy to see that one can effectively enumerate all the elements of the finite set $S(m)$, given $m\in\N^+$.
This is obvious in the case of $\mu\in\Q$.
This also holds true in the case of $\mu\notin\Q$ because, in such a case, either
\[
  \abs{\sum_{i=1}^{4^k} X(\tau(i))-4^k \mu}< 2^{k+1}
  \quad\text{ or }\quad
  \abs{\sum_{i=1}^{4^k} X(\tau(i))-4^k \mu}> 2^{k+1}
\]
holds for each $k\in\N^+$ and $\tau\in\Omega^{4^k}$,
and moreover $\mu$ is computable.
Thus, since $P$ is
a computable finite probability space,
it follows that
$\Bm{P}{\osg{S(m)}}$, i.e., $\Bm{P}{\osg{\mathcal{T}_m}}$, is uniformly computable in $m$,
and thus $\mathcal{T}$ is Schnorr $P$-test.
Hence, it follows from \eqref{pr:SpeedLimitTheorem-RV:eq9} that
$\alpha$ is not Schnorr  $P$-random.
This completes the proof.
\end{proof}

Theorems~\ref{EffectiveLLN-RV} and \ref{SpeedLimitTheorem-RV}
together
lead to
the following two theorems,
Theorem~\ref{thm:main-ML} and Theorem~\ref{thm:main-Schnorr}
below.

\begin{theorem}
[Main result II regarding algorithmic randomness]
\label{thm:main-ML}
Let $\Omega$ be an alphabet, and let $P\in\PS(\Omega)$.
Suppose that $P$ is computable.
Let $X$ be a real random variable on $\Omega$.
Suppose that
$X(a)$ is a computable real for all $a\in\Omega$
and $V(X)>0$.
Let $\alpha\in\Omega^\infty$. Suppose that $\alpha$ is Martin-L\"of $P$-random.
Then, for every real $t>0$, the following conditions~(i) and (ii) are equivalent to each other:
\begin{enumerate}
  \item There exists $M\in\N^+$ such that for every $n\ge M$ and every $k\in\N^+$
    if $k\ge n^t$ then
    \begin{equation*}%
      \abs{\frac{1}{k}\sum_{i=1}^k X(\alpha(i))-E(X)}<\frac{1}{n}.
    \end{equation*}
  \item $t>2$.
\end{enumerate}
\end{theorem}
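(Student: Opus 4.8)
The plan is to obtain Theorem~\ref{thm:main-ML} as a direct combination of Theorems~\ref{EffectiveLLN-RV} and \ref{SpeedLimitTheorem-RV}, following exactly the same two-directional argument used to prove Theorem~\ref{thm:main}.

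For the implication~(ii)~$\Rightarrow$~(i), I would assume $t>2$ and write $t=2+\varepsilon$ for some positive real $\varepsilon$. Since $\alpha$ is Martin-L\"of $P$-random, it is also Schnorr $P$-random by the inclusion $\mathrm{ML}_P\subset\mathrm{S}_P$ of Theorem~\ref{Schnorr-P-AE}, so Theorem~\ref{EffectiveLLN-RV} applies directly and yields the required $M$ together with the bound for every $k\ge n^{2+\varepsilon}=n^{t}$. This direction is immediate.

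For the harder implication~(i)~$\Rightarrow$~(ii), I would argue by contradiction, assuming that (i) holds while $t\le 2$. The idea is to specialize the hypothesis~(i) along the subsequence $k=n^{2}$: since $t\le 2$ we have $n^{2}\ge n^{t}$ for every $n\ge 1$, so (i) furnishes some $M$ with $\abs{\frac{1}{n^{2}}\sum_{i=1}^{n^{2}}X(\alpha(i))-E(X)}<\frac{1}{n}$ for all $n\ge M$. Substituting $n=2^{m}$ (so that $n^{2}=4^{m}$) and clearing the denominator then produces, for all sufficiently large $m$, the discrete estimate $\abs{\sum_{i=1}^{4^{m}}X(\alpha(i))-4^{m}E(X)}<2^{m}$, which is exactly the hypothesis demanded by Theorem~\ref{SpeedLimitTheorem-RV}. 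Because $V(X)>0$ and each $X(a)$ is a computable real by assumption, part~(i) of Theorem~\ref{SpeedLimitTheorem-RV} then forces $\alpha$ to fail to be Martin-L\"of $P$-random, contradicting the hypothesis of the theorem; hence $t>2$.

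The argument is essentially routine once the two input theorems are in place, so I do not expect a genuine obstacle; the only care needed is in the discretization step. Concretely, one must verify that $n^{2}\ge n^{t}$ under $t\le 2$, and that the substitution $n=2^{m}$ converts the factor $n^{2}/n=n$ into $2^{m}$, so that the strict bound $<2^{m}$ (a fortiori $\le 2^{m}$) holds for all $m$ beyond $\lceil\log_2 M\rceil$, matching the $4^{m}$/$2^{m}$-form of Theorem~\ref{SpeedLimitTheorem-RV}. I would also note explicitly that the computability of each $X(a)$ is precisely what licenses the use of part~(i) rather than part~(ii) of Theorem~\ref{SpeedLimitTheorem-RV}, which is what delivers the Martin-L\"of (rather than Schnorr) conclusion required here.
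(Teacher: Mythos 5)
Your proposal is correct and follows essentially the same route as the paper: the paper's own proof of Theorem~\ref{thm:main-ML} is exactly the immediate combination of Theorems~\ref{EffectiveLLN-RV} and \ref{SpeedLimitTheorem-RV} together with the inclusion of Martin-L\"of $P$-randomness in Schnorr $P$-randomness, with the discretization $k=n^2$, $n=2^m$ being the same device used in the paper's proof of Theorem~\ref{thm:main}. Your added care about the substitution step and about the computability of each $X(a)$ licensing part~(i) of Theorem~\ref{SpeedLimitTheorem-RV} is accurate and simply makes explicit what the paper leaves implicit.
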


\begin{proof}
Theorem~\ref{thm:main-ML} follows immediately from
Theorems~\ref{EffectiveLLN-RV} and \ref{SpeedLimitTheorem-RV}, and the fact that,
for every $Q\in\PS(\Omega)$ and $\beta\in\Omega^\infty$,
if $\beta$ is Martin-L\"of $Q$-random then $\beta$ is Schnorr $Q$-random.
\end{proof}

\begin{theorem}
[Main result III regarding algorithmic randomness]
\label{thm:main-Schnorr}
Let $\Omega$ be an alphabet, and let $P\in\PS(\Omega)$.
Suppose that $P$ is computable.
Let $X$ be a real random variable on $\Omega$.
Suppose that
$X(a)$ is a rational for all $a\in\Omega$
and $V(X)>0$.
Let $\alpha\in\Omega^\infty$. Suppose that $\alpha$ is Schnorr $P$-random.
Then, for every real $t>0$, the following conditions~(i) and (ii) are equivalent to each other:
\begin{enumerate}
  \item There exists $M\in\N^+$ such that for every $n\ge M$ and every $k\in\N^+$
    if $k\ge n^t$ then
    \begin{equation*}%
      \abs{\frac{1}{k}\sum_{i=1}^k X(\alpha(i))-E(X)}<\frac{1}{n}.
    \end{equation*}
  \item $t>2$.
\end{enumerate}
\end{theorem}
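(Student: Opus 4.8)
The plan is to mirror the proofs of Theorem~\ref{thm:main} and Theorem~\ref{thm:main-ML}, proving the two implications separately and invoking the real-random-variable analogues of the effective law of large numbers and of the convergence speed limit theorem. The rationality hypothesis on the values $X(a)$ is exactly what lets me appeal to the Schnorr-randomness conclusion, namely part~(ii) of Theorem~\ref{SpeedLimitTheorem-RV}, rather than to its Martin-L\"of counterpart used in Theorem~\ref{thm:main-ML}.

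For the implication (ii)~$\Rightarrow$~(i), I would set $\varepsilon := t - 2 > 0$ and apply Theorem~\ref{EffectiveLLN-RV} directly: since $\alpha$ is Schnorr $P$-random and $P$ is computable, there is $M \in \N^+$ such that for every $n \ge M$ and every $k \in \N^+$ with $k \ge n^{2+\varepsilon} = n^t$ the bound $\abs{\frac{1}{k}\sum_{i=1}^k X(\alpha(i)) - E(X)} < \frac{1}{n}$ holds. This is precisely condition~(i), so this direction is immediate.

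For the converse (i)~$\Rightarrow$~(ii), I would argue by contradiction, assuming $t \le 2$. Writing $S_k(\alpha) := \sum_{i=1}^k X(\alpha(i))$, the key step is to specialize (i) to the lengths $k = n^2$, which is legitimate because $n^2 \ge n^t$ whenever $t \le 2$; this yields $\abs{S_{n^2}(\alpha) - n^2 E(X)} < n$ for all sufficiently large $n$. Substituting $n = 2^m$ then converts this into $\abs{S_{4^m}(\alpha) - 4^m E(X)} \le 2^m$ for all sufficiently large $m$, which is exactly the hypothesis of Theorem~\ref{SpeedLimitTheorem-RV}. Since $V(X) > 0$ and each $X(a)$ is rational, part~(ii) of that theorem now yields that $\alpha$ is not Schnorr $P$-random, contradicting the hypothesis on $\alpha$; hence $t > 2$, as required.

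The argument is essentially routine once the two cited theorems are available, so I do not expect a genuine obstacle. The only step needing care is the change of variables $k = n^2$ followed by $n = 2^m$, which transforms the polynomial-rate statement of~(i) into the exponential $4^m$-versus-$2^m$ form demanded by the speed limit theorem, together with the observation that the strict bound $< \frac{1}{n}$ at length $n^2$ suffices to give the required $\le 2^m$ at length $4^m$. This is the same bookkeeping carried out in the proof of Theorem~\ref{thm:main}, and it is the rationality of the values $X(a)$ that keeps the associated test Schnorr rather than merely Martin-L\"of.
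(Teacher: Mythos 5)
Your proposal is correct and follows essentially the same route as the paper, which proves Theorem~\ref{thm:main-Schnorr} as an immediate consequence of Theorems~\ref{EffectiveLLN-RV} and \ref{SpeedLimitTheorem-RV}; your $k=n^2$, $n=2^m$ bookkeeping is exactly the reduction carried out explicitly in the paper's proof of Theorem~\ref{thm:main}. The appeal to part~(ii) of Theorem~\ref{SpeedLimitTheorem-RV} via the rationality of the values $X(a)$ is precisely the intended use of that hypothesis.
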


\begin{proof}
Theorem~\ref{thm:main-Schnorr} follows immediately from
Theorems~\ref{EffectiveLLN-RV} and \ref{SpeedLimitTheorem-RV}.
\end{proof}

\section{Effectivization of almost sure convergence in the strong law of large numbers,
and its absolute speed limit of convergence}
\label{Section:EffectiveSLLN}

In this section, we investigate
an effectivization of almost sure convergence in the strong law of large numbers,
and its absolute speed limit of convergence,
within the framework of probability theory.
This section does not depend on any results of the previous sections.
Thus,
this section can be read independently of the previous sections.

In this section, we consider
a general probability space $(\Omega,\mathcal{F},P)$
where $\Omega$ is the sample space, $\mathcal{F}$ is a $\sigma$-field in $\Omega$,
and $P$ is a probability measure on
$(\Omega,\mathcal{F})$.
Note that, in this section, the sample space $\Omega$ is not necessarily a finite set
like in the preceding sections.
Let $(\Omega,\mathcal{F},P)$ be a probability space, and
Let $X$ be a real random variable on $(\Omega,\mathcal{F},P)$.
In this section, we denote the \emph{mean} and \emph{variance} of $X$ by
$E[X]$ and $V[X]$, respectively. Thus, $V[X]=E[(X-E(X))^2]$.
See Billingsley~\cite{B95}, Chung~\cite{Chung01},
Durrett~\cite{Durrett19}, and
Klenke~\cite{Klenke20} for probability theory in general.

Frist, recall that the strong law of large numbers has the following form
\cite{Et81,B95,Chung01,Durrett19,Klenke20}.

\begin{theorem}%
[The strong law of large numbers]
Let $X_1,X_2,\dotsc$ be
independent and identically distributed real random variables on a probability space~$(\Omega,\mathcal{F},P)$.
Suppose that $E[\abs{X_1}]<\infty$.
Then the property
\begin{equation*}
  \lim_{n\to\infty}\frac{1}{n}\sum_{i=1}^n X_i=E[X_1]
\end{equation*}
holds almost surely, i.e.,
\begin{equation*}
  P\left(\left\{\omega\in\Omega\;\middle\vert\>\lim_{n\to\infty}\frac{1}{n}\sum_{i=1}^n X_i(\omega)=E[X_1]\right\}\right)=1.
\end{equation*}
\qed
\end{theorem}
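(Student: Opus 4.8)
The plan is to follow Etemadi's truncation argument~\cite{Et81}, which needs only a finite first moment (and in fact only pairwise independence) and proceeds through a geometric subsequence. First I would reduce to nonnegative summands: writing $X_i = X_i^+ - X_i^-$ and noting $E[X_1^+],E[X_1^-] \le E[\abs{X_1}] < \infty$, it suffices to prove the claim separately for the two nonnegative i.i.d.\ families and subtract the resulting limits. So from now on assume $X_i \ge 0$ and set $S_n := \sum_{i=1}^n X_i$.

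Next I would truncate at the running level $k$: put $Y_k := X_k \mathbf{1}_{\{X_k \le k\}}$ and $T_n := \sum_{k=1}^n Y_k$. Since $\sum_{k=1}^\infty P(X_k \neq Y_k) = \sum_{k=1}^\infty P(X_1 > k) \le E[X_1] < \infty$, the Borel--Cantelli lemma gives $X_k = Y_k$ for all sufficiently large $k$ almost surely, whence $\frac{1}{n}(S_n - T_n) \to 0$ a.s. It therefore suffices to prove $\frac{1}{n}T_n \to E[X_1]$ almost surely.

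For the truncated sums I would first control a geometric subsequence. Fix $\alpha > 1$ and set $k_n := \lfloor \alpha^n \rfloor$. By Chebyshev's inequality and independence,
\[
  \sum_n P\!\left(\abs{\tfrac{T_{k_n}-E[T_{k_n}]}{k_n}} > \varepsilon\right)
  \le \frac{1}{\varepsilon^2}\sum_n \frac{1}{k_n^2}\sum_{j=1}^{k_n} V[Y_j].
\]
Bounding $V[Y_j] \le E[Y_j^2] = E[X_1^2\mathbf{1}_{\{X_1 \le j\}}]$ and interchanging the order of summation, the right-hand side is at most a constant multiple of $\sum_j \frac{1}{j^2}E[X_1^2\mathbf{1}_{\{X_1 \le j\}}]$, because the geometric spacing yields $\sum_{n:\,k_n \ge j} k_n^{-2} \le C j^{-2}$. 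Borel--Cantelli then gives $\frac{T_{k_n}-E[T_{k_n}]}{k_n} \to 0$ a.s., while $E[Y_k] = E[X_1\mathbf{1}_{\{X_1 \le k\}}] \to E[X_1]$ by monotone convergence forces $\frac{E[T_n]}{n} \to E[X_1]$ by Ces\`aro averaging, and in particular $\frac{E[T_{k_n}]}{k_n} \to E[X_1]$; hence $\frac{T_{k_n}}{k_n} \to E[X_1]$ a.s.

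Finally I would interpolate between the subsequence indices using monotonicity of $T_n$ (valid because $Y_k \ge 0$): for $k_n \le m \le k_{n+1}$,
\[
  \frac{k_n}{k_{n+1}}\cdot\frac{T_{k_n}}{k_n}
  \le \frac{T_m}{m}
  \le \frac{k_{n+1}}{k_n}\cdot\frac{T_{k_{n+1}}}{k_{n+1}},
\]
and letting $n \to \infty$ with $k_{n+1}/k_n \to \alpha$ gives $\alpha^{-1}E[X_1] \le \liminf_m \frac{T_m}{m} \le \limsup_m \frac{T_m}{m} \le \alpha E[X_1]$ almost surely. Intersecting the corresponding full-measure events over a sequence $\alpha \downarrow 1$ forces $\frac{T_m}{m} \to E[X_1]$ a.s., which combined with the truncation step yields $\frac{S_m}{m} \to E[X_1]$ a.s. The main obstacle is the moment estimate $\sum_j \frac{1}{j^2}E[X_1^2\mathbf{1}_{\{X_1 \le j\}}] \le C' E[X_1] < \infty$: this is exactly where truncating at the running level (rather than at a fixed level) pays off, and it is established by Tonelli's theorem together with the elementary tail bound $\sum_{j \ge x} j^{-2} \le 2/x$, converting what superficially looks like a second-moment requirement into one controlled solely by $E[\abs{X_1}]$.
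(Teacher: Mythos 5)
Your proposal is correct, and it is precisely the Etemadi truncation argument that the paper itself relies on: the paper states this theorem without proof, citing Etemadi~\cite{Et81} among the standard references, and your reduction to nonnegative summands, truncation at the running level, Chebyshev plus Borel--Cantelli along the geometric subsequence $k_n=\lfloor\alpha^n\rfloor$, and monotone interpolation with $\alpha\downarrow 1$ reproduce that proof faithfully, including the decisive estimate $\sum_{j}j^{-2}E\bigl[X_1^2\mathbf{1}_{\{X_1\le j\}}\bigr]\le C'E[X_1]$. Nothing needs to be fixed.
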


In this section, we investigate an \emph{effectivization} of
almost sure convergence
in the strong law of large numbers above.
We will prove several theorems corresponding to ones
proved
in the preceding sections
regarding algorithmic randomness.
Note that any probability space
or any random variables
on it
which we consider in this section 
are not required to be computable at all in any sense.

First, we show the following theorem
regarding
probability theory,
which corresponds to Theorem~\ref{EffectiveLLN-RV}
regarding
algorithmic randomness
in Section~\ref{Section:EffectiveLLN=RV}.

\begin{theorem}
[Effectivization of almost sure convergence in the strong law of large numbers~I]
\label{th:SLLN-effective-convergence}
Let $X_1,X_2,\dotsc$ be
independent and identically distributed random variables on a probability space~$(\Omega,\mathcal{F},P)$.
Suppose that
there exist reals $a$ and $b$ with $a<b$
such that $a\le X_1\le b$ holds almost surely.
Then the following property holds almost surely:
\begin{quote}
For every real $t>2$ there exists $M\in\N^+$ such that
for every $n\ge M$ and every $k\in\N^+$ if $k\ge n^t$ then
\begin{equation*}
  \abs{\frac{1}{k}\sum_{i=1}^k X_i-E[X_1]}<\frac{1}{n}.
\end{equation*}
\end{quote}
This property is certainly $\mathcal{F}$-measurable.
\qed
\end{theorem}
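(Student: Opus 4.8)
The plan is to fix a single exponent $t>2$, recast the asserted convergence rate as the complement of an \emph{infinitely often} event, and eliminate that event by the Borel--Cantelli lemma, using a Chernoff/Hoeffding-type concentration bound that is available precisely because the summands are almost surely bounded.

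First I would reduce to one value of $t$. Writing $\mu:=E[X_1]$ and $S_k:=\sum_{i=1}^k X_i$, let $A_t$ denote the event that there exists $M\in\N^+$ with $\abs{S_k/k-\mu}<1/n$ for all $n\ge M$ and all $k\ge n^t$. For $n\ge 1$ we have $n^{t}\ge n^{t'}$ whenever $t\ge t'$, so the defining condition is monotone in $t$ and hence $A_{t'}\subseteq A_t$ for $t\ge t'$. Consequently $\bigcap_{t>2}A_t=\bigcap_{j\in\N^+}A_{t_j}$ for the sequence $t_j:=2+1/j$, and since a countable intersection of probability-one events again has probability one, it suffices to prove $P(A_t)=1$ for each fixed $t>2$.

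Next, for fixed $t>2$ I would set $B_n:=\{\exists\,k\ge n^t:\abs{S_k/k-\mu}\ge 1/n\}$, so that $A_t^{c}=\{B_n\text{ infinitely often}\}$, and aim to show $\sum_{n}P(B_n)<\infty$. By countable subadditivity $P(B_n)\le\sum_{k\ge\lceil n^t\rceil}P(\abs{S_k/k-\mu}\ge 1/n)$. Applying Hoeffding's inequality to the bounded i.i.d.\ summands $a\le X_i\le b$ gives $P(\abs{S_k/k-\mu}\ge 1/n)\le 2\exp(-ck/n^2)$ with $c:=2/(b-a)^2>0$, and summing the resulting geometric series in $k$ --- using the mean-value estimate $1-\exp(-c/n^2)>\exp(-c/n^2)\,c/n^2$ exactly as in the proof of Theorem~\ref{EffectiveLLN0} --- yields a bound of the form $P(B_n)\le C\,n^2\exp(-c\,n^{t-2})$. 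Since $t-2>0$, the factor $\exp(-c\,n^{t-2})$ overwhelms the polynomial $n^2$, so $\sum_n P(B_n)<\infty$; Borel--Cantelli then gives $P(A_t^{c})=0$, i.e.\ $P(A_t)=1$, and intersecting over $j$ finishes the almost-sure claim. Measurability of the stated property is immediate, since it equals $\bigcap_j\bigcup_{M}\bigcap_{n\ge M}\bigcap_{k\ge\lceil n^{t_j}\rceil}\{\abs{S_k/k-\mu}<1/n\}$, a countable Boolean combination of the measurable sets $\{\abs{S_k/k-\mu}<1/n\}$.

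The main obstacle is obtaining summability of $\sum_k P(\abs{S_k/k-\mu}\ge 1/n)$ over the infinitely many $k\ge n^t$: a second-moment (Chebyshev) bound only gives $O(n^2/k)$ per term, whose tail in $k$ diverges, so the union bound over $k$ would fail. It is exactly the boundedness hypothesis $a\le X_1\le b$ that upgrades this to exponential decay in $k$ via Hoeffding, making both the inner sum over $k$ (geometric) and the outer sum over $n$ (because $t>2$) converge; this is the crux of why the theorem needs bounded summands and why the critical exponent is $2$.
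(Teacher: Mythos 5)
Your proposal is correct and takes essentially the same route as the paper's proof: Hoeffding's inequality (Theorem~\ref{HI}) applied to the bounded i.i.d.\ summands, a union bound over the doubly-indexed bad events $\left\{\abs{S_k/k-\mu}\ge 1/n\right\}$ with $k\ge n^t$, summability of the resulting double series, and a countable reduction over the exponent. The only differences are packaging: you sum the geometric series in $k$ first and invoke Borel--Cantelli by name with an elementary ``exponential beats polynomial'' bound, where the paper proves convergence of the full double sum via Lemma~\ref{EffectiveSLLN-lemma} (incomplete gamma comparison) and carries out the Borel--Cantelli tail argument by hand; and you handle the quantifier over real $t>2$ by a monotone intersection over $t_j=2+1/j$, where the paper equivalently takes a union over rational $\epsilon$ in the complementary null event.
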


In order to prove Theorem~\ref{SpeedLimitTheorem-SLLN},
we use Hoeffding's inequality below \cite{Hoeff63,BLM13}.

\begin{theorem}[Hoeffding's inequality, Hoeffding~\cite{Hoeff63}]\label{HI}
Let $(\Omega,\mathcal{F},P)$ be a probability space.
Let $a$ and $b$ be reals with $a<b$.
Let $X_1,X_2,\dotsc$ be independent and identically distributed
real random variables on $(\Omega,\mathcal{F},P)$
such that $a\le X_1\le b$ holds almost surely.
Then, for every real $\varepsilon>0$ and $n\in\N^+$, it holds that
\begin{equation*}%
  P\left(\abs{\frac{1}{n}\sum_{k=1}^n X_k-E[X_1]}\ge\varepsilon\right)
  \le 2\exp\left(-\frac{2\varepsilon^2}{(b-a)^2}n\right).
\end{equation*}
\qed
\end{theorem}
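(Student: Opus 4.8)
The plan is to prove the one-sided tail bound and then double it by symmetry, via the standard Chernoff-bound technique together with a moment generating function estimate (Hoeffding's lemma). First I would pass to the centered variables $Y_k:=X_k-E[X_1]$, which satisfy $E[Y_k]=0$ and, writing $\mu:=E[X_1]$, lie in $[a-\mu,\,b-\mu]$ almost surely, an interval of length $b-a$. The event $\frac1n\sum_{k=1}^n X_k-\mu\ge\varepsilon$ is then identical to $\sum_{k=1}^n Y_k\ge n\varepsilon$, while the complementary lower tail is obtained by applying the same argument to the variables $-Y_k$; a union bound over these two tails contributes the leading factor $2$.

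For the upper tail I would apply the exponential Markov (Chernoff) inequality: for every real $s>0$,
\[
  P\!\left(\sum_{k=1}^n Y_k\ge n\varepsilon\right)
  \le e^{-sn\varepsilon}\,E\!\left[\exp\!\left(s\sum_{k=1}^n Y_k\right)\right]
  = e^{-sn\varepsilon}\left(E\!\left[e^{sY_1}\right]\right)^{n},
\]
where the factorization of the expectation uses independence and the collapse of the product to an $n$-th power uses that the $Y_k$ are identically distributed.

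The crux of the argument, and the step I expect to be the main obstacle, is bounding the moment generating function of a single centered, bounded variable, i.e.\ Hoeffding's lemma: if $E[Y]=0$ and $Y\in[c,d]$ almost surely, then $E[e^{sY}]\le\exp\!\left(s^2(d-c)^2/8\right)$ for every real $s$. I would prove it by convexity of $y\mapsto e^{sy}$: for $y\in[c,d]$ one has $e^{sy}\le\frac{d-y}{d-c}e^{sc}+\frac{y-c}{d-c}e^{sd}$, and taking expectations with $E[Y]=0$ bounds $E[e^{sY}]$ by $e^{-pu}\!\left(1-p+pe^{u}\right)$, where $p:=-c/(d-c)\in[0,1]$ and $u:=s(d-c)$. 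Setting $L(u):=-pu+\log\!\left(1-p+pe^{u}\right)$, a direct computation gives $L(0)=0$, $L'(0)=0$, and $L''(u)=\theta(1-\theta)\le 1/4$ with $\theta:=pe^{u}/(1-p+pe^{u})\in[0,1]$, so Taylor's theorem yields $L(u)\le u^2/8$. This is exactly the claimed bound, since $d-c=b-a$ for $Y=Y_1$.

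Finally I would substitute the lemma and optimize over $s$. The combined bound reads $P(\sum_{k}Y_k\ge n\varepsilon)\le\exp\!\left(-sn\varepsilon+ns^2(b-a)^2/8\right)$, and the exponent, a quadratic in $s$, is minimized at $s=4\varepsilon/(b-a)^2$, where it equals $-2n\varepsilon^2/(b-a)^2$. This establishes the one-sided estimate $\exp\!\left(-2\varepsilon^2 n/(b-a)^2\right)$; the symmetric lower tail gives the same bound, and the union bound supplies the factor $2$, completing the proof. Everything apart from Hoeffding's lemma is routine, so the whole difficulty is concentrated in the convexity estimate for $L''$.
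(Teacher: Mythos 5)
Your proposal is correct and complete. Note, however, that the paper does not prove Theorem~\ref{HI} at all: it is stated with a citation to Hoeffding~\cite{Hoeff63} (see also \cite{BLM13}) and used as an imported tool in the proof of Theorem~\ref{th:SLLN-effective-convergence}. So there is no internal proof to compare against; what you have reconstructed is precisely the classical argument from the cited literature --- centering, the exponential Markov/Chernoff bound with the product collapsing to an $n$-th power by independence and identical distribution, Hoeffding's lemma $E[e^{sY}]\le\exp\bigl(s^2(d-c)^2/8\bigr)$ proved via the convexity interpolation and the second-derivative bound $L''(u)=\theta(1-\theta)\le 1/4$, optimization at $s=4\varepsilon/(b-a)^2$, and a union bound over the two tails for the factor $2$. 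All computations check out: the optimized exponent is indeed $-2n\varepsilon^2/(b-a)^2$. One point you state without justification, though it is immediate, is that $p=-c/(d-c)\in[0,1]$: this needs $c\le 0\le d$, which follows from $E[Y]=0$ together with $Y\in[c,d]$ almost surely; and the degenerate case $c=d$ cannot occur here since $d-c=b-a>0$. With that observation made explicit, your proof is a fully rigorous, self-contained replacement for the citation.
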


In order to prove Theorem~\ref{SpeedLimitTheorem-SLLN},
we also need the following lemma:

\begin{lemma}\label{EffectiveSLLN-lemma}
Let $a$ and $b$ be reals with $a<b$, and let $\epsilon$ be a positive real.
Then the double infinite sum
\begin{equation}\label{EffectiveSLLN-DIS}
  \sum_{n=1}^\infty\sum_{k=f_{\epsilon}(n)}^\infty
  \exp\left(-\frac{2k}{(b-a)^2n^2}\right)
\end{equation}
exists as a finite real,
where $f_{\epsilon}$ denotes a function  $f_{\epsilon}\colon\N^+\to\N^+$ defined by
$f_{\epsilon}(n)=\left\lceil n^{2+\epsilon} \right\rceil$.
\end{lemma}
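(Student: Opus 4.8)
The plan is to collapse the inner sum over $k$ into a closed form via the geometric series and then show that the resulting single series over $n$ converges because exponential decay dominates polynomial growth. Throughout I would abbreviate $c:=2/(b-a)^2$, so that $c>0$ and the summand is exactly $\exp(-ck/n^2)$. First I would fix $n\in\N^+$ and regard $\sum_{k=f_{\epsilon}(n)}^\infty \exp(-ck/n^2)$ as a geometric series with first term $\exp(-cf_{\epsilon}(n)/n^2)$ and common ratio $\exp(-c/n^2)\in(0,1)$, summing it to $\exp(-cf_{\epsilon}(n)/n^2)\big/\bigl(1-\exp(-c/n^2)\bigr)$. To bound the denominator from below I would apply the mean value theorem to the exponential function exactly as in the derivation of \eqref{Schnorr-EffectiveLLN-MVT1}, which gives $1-\exp(-c/n^2)>(c/n^2)\exp(-c/n^2)$. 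Substituting this bound yields, in complete analogy with \eqref{Schnorr-EffectiveLLN-ineq01},
\[
  \sum_{k=f_{\epsilon}(n)}^\infty \exp\!\left(-\frac{ck}{n^2}\right)
  < \frac{n^2}{c}\exp\!\left(-\frac{c\bigl(f_{\epsilon}(n)-n^2\bigr)}{n^2}\right).
\]

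Next I would invoke the defining inequality $f_{\epsilon}(n)=\lceil n^{2+\epsilon}\rceil\ge n^{2+\epsilon}$, so that $\bigl(f_{\epsilon}(n)-n^2\bigr)/n^2\ge n^{\epsilon}-1$, whence the inner sum is bounded above by $(e^{c}/c)\,n^{2}\exp(-c\,n^{\epsilon})$ uniformly in $n$. Consequently the double sum \eqref{EffectiveSLLN-DIS} is dominated by the single series $(e^{c}/c)\sum_{n=1}^\infty n^{2}\exp(-c\,n^{\epsilon})$. To finish I would verify convergence of this series: since $\epsilon>0$, the quantity $c\,n^{\epsilon}$ eventually exceeds $4\log n$, so $\exp(-c\,n^{\epsilon})\le n^{-4}$ and hence $n^{2}\exp(-c\,n^{\epsilon})\le n^{-2}$ for all sufficiently large $n$; comparison with $\sum n^{-2}$ gives finiteness, and therefore \eqref{EffectiveSLLN-DIS} exists as a finite real.

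I do not expect any genuine obstacle here, since the computation closely parallels the estimates already carried out in the proof of Theorem~\ref{EffectiveLLN0}. The only two points requiring a little care are the lower bound on $1-\exp(-c/n^2)$, which is supplied by the mean value theorem as noted above, and the bookkeeping of the constants $c$ and $e^{c}$; neither affects the finiteness of the sum, so the argument reduces to the routine fact that $n^{2}\exp(-c\,n^{\epsilon})$ is summable whenever $c>0$ and $\epsilon>0$.
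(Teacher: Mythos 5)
Your proposal is correct, and its first half coincides with the paper's argument: both collapse the inner sum as a geometric series and bound the denominator $1-\exp(-c/n^2)$ from below via the mean value theorem, arriving at an estimate of the form (a constant times) $n^2\exp(-c\,n^{\epsilon})$ for the inner sum, so that everything reduces to the convergence of $\sum_{n} n^2\exp(-c\,n^{\epsilon})$. Where you diverge is in this last step: the paper proves convergence by an integral comparison, substituting $t=u^{\epsilon}/c$ to identify $\int_N^\infty u^2\exp(-u^{\epsilon}/c)\,du$ with $\frac{c^{3/\epsilon}}{\epsilon}\Gamma(3/\epsilon,N^{\epsilon}/c)$ for a suitable $N$ (using monotonicity of the integrand past $u^{\epsilon}\ge 2c/\epsilon$), whereas you observe that $c\,n^{\epsilon}\ge 4\log n$ eventually, so the general term is at most $n^{-2}$ and comparison with $\sum n^{-2}$ finishes. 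Your route is more elementary and self-contained, needing nothing beyond the logarithm-versus-power comparison; the paper's route is heavier but yields an explicit closed-form tail bound in terms of the incomplete gamma function, which is the kind of quantitative control needed in the effectivized setting of the companion Lemma~\ref{EffectiveLLN-lemma}, where a total recursive bounding function must be extracted. Since the present lemma only asserts finiteness, your argument fully suffices; the one small wrinkle in your write-up, the bound $\frac{n^2}{c}\exp\left(-c\bigl(f_{\epsilon}(n)-n^2\bigr)/n^2\right)$ in place of the sharper $\frac{n^2}{c}\exp\left(-c\bigl(f_{\epsilon}(n)-1\bigr)/n^2\right)$ that the geometric-series computation actually gives, is harmless because $n^2\ge 1$ makes your stated bound the weaker (hence still valid) one.
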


\begin{proof}
In what follows, we denote $(b-a)^2/2$ by $c$.
Note that $c>0$ by the assumption of the lemma.
First,
for each $n\in\N^+$,
using the mean value theorem, we have that
\begin{equation}\label{EffectiveSLLN-MVT}
  1-\exp\left(-\frac{1}{cn^2}\right)
  >\exp\left(-\frac{1}{cn^2}\right)\frac{1}{cn^2}
  \ge \exp(-1/c)\frac{1}{cn^2}.
\end{equation}
Thus, for each $n\in\N^+$, we see that
\begin{equation}\label{EffectiveSLLN-ineq01}
\begin{split}
  \sum_{k=f_{\epsilon}(n)}^\infty \exp\left(-\frac{k}{cn^2}\right)
  &\le \sum_{l=0}^\infty\exp\left(-\frac{n^{\epsilon}}{c}\right)
  \exp\left(-\frac{l}{cn^2}\right)
  =\frac{\exp(-n^{\epsilon}/c)}{1-\exp(-1/(cn^2))} \\
  &<c\exp(1/c) n^2\exp(-n^{\epsilon}/c)),
\end{split}
\end{equation}
where the last inequality follows from the inequality~\eqref{EffectiveSLLN-MVT}.

We will reduce the convergence of the double infinite sum~\eqref{EffectiveSLLN-DIS}
to the existence of the incomplete gamma function $\Gamma(x,y)$ defined by
\begin{equation*}%
  \Gamma(x,y):=\int_{y}^\infty t^{x-1} e^{-t}dt,
\end{equation*}
where $x$ and $y$ are arbitrary reals satisfying that $x>0$ and $y\ge 0$.
It is easy to see that
the improper integral in the definition above exists certainly for such reals $x$ and $y$.
See for instance Jameson~\cite{Jam16}
for the detail of the properties of the incomplete gamma function~$\Gamma(x,y)$.

Now, we choose any specific $N\in\N^+$ with
$N^{\epsilon}\ge 2c/\epsilon$.
Applying the method of integration of substitution to
the improper integral $\Gamma(3/\epsilon,N^{\epsilon}/c)$ with $t=u^{\epsilon}/c$, we see that
the improper integral
\[
  \int_N^\infty u^2 \exp(-u^{\epsilon}/c) du
\]
exists and equals
\[
  \frac{c^{3/\epsilon}}{\epsilon}\Gamma(3/\epsilon,N^{\epsilon}/c).
\]
Thus, since $u^2\exp(-u^{\epsilon}/c)$ is a strictly decreasing function of $u$ for all
positive real
$u$ with $u^{\epsilon}\ge 2c/\epsilon$,
we have that
\begin{equation}\label{Lemma-ML-Gamma-eq01}
  \sum_{n=N+1}^\infty n^2\exp(-n^{\epsilon}/c)
  \le\int_N^\infty u^2 \exp(-u^{\epsilon}/c) du
  =\frac{c^{3/\epsilon}}{\epsilon}\Gamma(3/\epsilon,N^{\epsilon}/c).
\end{equation}
Hence, it follows from \eqref{EffectiveSLLN-ineq01} and \eqref{Lemma-ML-Gamma-eq01}
that the double inifinite sum~\eqref{EffectiveSLLN-DIS} converges to a finite real.
\end{proof}

Then, the proof of Theorem~\ref{th:SLLN-effective-convergence} is given as follows.

\begin{proof}[Proof of Theorem~\ref{th:SLLN-effective-convergence}]
Since there exist reals $a$ and $b$ with $a<b$
such that $a\le X_1\le b$ holds almost surely,
it follows from Hoeffding's inequality, Theorem~\ref{HI}, that
\begin{equation}\label{Effective-SLLN-HI:eq1}
  P\left(\abs{\frac{1}{k}\sum_{i=1}^k X_i-\mu}\ge\frac{1}{n}\right)
  \le 2\exp\left(-\frac{2k}{(b-a)^2n^2}\right)
\end{equation}
for every $n\in\N^+$ and every $k\in\N^+$,
where $\mu:=E[X_1]$.

Now, let $\epsilon$ be an arbitrary positive real.
We then define a function $f_{\epsilon}\colon\N^+\to\N^+$ by
\[
  f_{\epsilon}(n):=\left\lceil n^{2+\varepsilon} \right\rceil.
\]
For each $m\in\N^+$, using \eqref{Effective-SLLN-HI:eq1} we have that
\begin{equation}\label{Effective-SLLN-HI:eq2} 
\begin{split}
  P\left(\exists\,n\ge m\;\,\exists\,k\ge f_{\epsilon}(n)
  \left[\>\abs{\frac{1}{k}\sum_{i=1}^k X_i-\mu}\ge\frac{1}{n}\>\right]\right)
  &\le\sum_{n=m}^\infty\sum_{k=f_{\epsilon}(n)}^\infty P\left(\abs{\frac{1}{k}\sum_{i=1}^k X_i-\mu}\ge\frac{1}{n}\right) \\
  &\le\sum_{n=m}^\infty\sum_{k=f_{\epsilon}(n)}^\infty 2\exp\left(-\frac{2k}{(b-a)^2n^2}\right),
\end{split}
\end{equation}
where the symbol~$\vee$ denotes the logical disjunction.
Since the most right-hand side of \eqref{Effective-SLLN-HI:eq2} converges to $0$
as $m\to\infty$ due to Lemma~\ref{EffectiveSLLN-lemma},
we have that
\begin{equation*}%
  P\left(\forall\,m\in\N^+\;\,\exists\,n\ge m\;\,\exists\,k\ge f_{\epsilon}(n)
  \left[\>\abs{\frac{1}{k}\sum_{i=1}^k X_i-\mu}\ge\frac{1}{n}\>\right]\right)=0.
\end{equation*}
Since $\epsilon$ is an arbitrary positive real and
any countable union of null sets is still a null set,
we have that
\begin{equation}\label{Effective-SLLN-HI:eq4}
  P\left(\exists\,\epsilon\in\Q^+\;\,\forall\,m\in\N^+\;\,\exists\,n\ge m\;\,\exists\,k\ge f_{\epsilon}(n)
  \left[\>\abs{\frac{1}{k}\sum_{i=1}^k X_i-\mu}\ge\frac{1}{n}\>\right]\right)=0,
\end{equation}
where $\Q^+$ denotes the set of positive rationals.
It is then easy to see that
the event in the left-hand side of
\eqref{Effective-SLLN-HI:eq4} is equivalent to
the event
\begin{equation*}%
  \exists\,\epsilon\in\R^+\;\,\forall\,m\in\N^+\;\,\exists\,n\ge m\;\,\exists\,k\ge f_{\epsilon}(n)
  \left[\>\abs{\frac{1}{k}\sum_{i=1}^k X_i-\mu}\ge\frac{1}{n}\>\right],
\end{equation*}
where $\R^+$ denotes the set of positive reals.
Hence, the result follows.
\end{proof}

The following corollary
corresponds to Corollary~\ref{cor:prf-EffectiveLLN-RV}
regarding
algorithmic randomness
in Section~\ref{Section:EffectiveLLN=RV}.

\begin{corollary}
[Effectivization of almost sure convergence in the strong law of large numbers~II]
\label{cor:SLLN-effective-convergence}
Let $X_1,X_2,\dotsc$ be
independent and identically distributed random variables on a probability space~$(\Omega,\mathcal{F},P)$.
Suppose that
there exist reals $a$ and $b$ with $a<b$
such that $a\le X_1\le b$ holds almost surely.
Then the following property holds almost surely:
\begin{quote}
There exists a primitive recursive function $f\colon\N\to\N$ such that
for every $n\in\N^+$ and every $k\in\N^+$ if $k\ge f(n)$ then
\begin{equation*}
  \abs{\frac{1}{k}\sum_{i=1}^k X_i-E[X_1]}<\frac{1}{n}.
\end{equation*}
\end{quote}
This property is certainly $\mathcal{F}$-measurable.
\end{corollary}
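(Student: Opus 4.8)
The plan is to derive this corollary from Theorem~\ref{th:SLLN-effective-convergence} in exactly the way Corollary~\ref{cor:prf-EffectiveLLN-RV} is derived from Theorem~\ref{EffectiveLLN-RV}, namely by specializing the free exponent $t$ to a single convenient value strictly above $2$. First I would invoke Theorem~\ref{th:SLLN-effective-convergence} to obtain that, almost surely, the parametrized property displayed there holds. Since that property quantifies over all reals $t>2$, I may instantiate it at $t=3$: thus, almost surely, there exists $M\in\N^+$ such that for every $n\ge M$ and every $k\in\N^+$ with $k\ge n^3$ we have $\abs{\frac{1}{k}\sum_{i=1}^k X_i-E[X_1]}<\frac{1}{n}$.

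Working on this probability-one event, I would define $f\colon\N\to\N$ by $f(n):=n^3$ if $n\ge M$ and $f(n):=M^3$ otherwise. Since $f(n)=(\max\{n,M\})^3$ and both cubing and taking the maximum with a fixed constant are primitive recursive, $f$ is primitive recursive. It then remains to verify that this $f$ witnesses the asserted property. For $n\ge M$ this is immediate, since $f(n)=n^3$ and the choice of $M$ applies directly. For $n<M$ we have $f(n)=M^3$, so any $k\ge f(n)$ satisfies $k\ge M^3$; applying the $M$-statement at the admissible index $M$ yields $\abs{\frac{1}{k}\sum_{i=1}^k X_i-E[X_1]}<\frac{1}{M}$, and since $n<M$ gives $\frac{1}{M}<\frac{1}{n}$, the required bound follows.

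For the $\mathcal{F}$-measurability of the property, I would use that there are only countably many primitive recursive functions. For each fixed primitive recursive $g\colon\N\to\N$, the event ``for all $n,k\in\N^+$ with $k\ge g(n)$ one has $\abs{\frac{1}{k}\sum_{i=1}^k X_i-E[X_1]}<\frac{1}{n}$'' is a countable intersection of the events $\{\abs{\frac{1}{k}\sum_{i=1}^k X_i-E[X_1]}<\frac{1}{n}\}$, each of which is $\mathcal{F}$-measurable because $\frac{1}{k}\sum_{i=1}^k X_i$ is a measurable function and $E[X_1]$ is a constant. The property in the corollary is the union of these events over all primitive recursive $g$, hence a countable union of measurable sets, and so is $\mathcal{F}$-measurable.

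I expect the only mildly delicate point to be the bookkeeping for the finitely many indices $n<M$: one must check that padding $f$ to the constant value $M^3$ there does not destroy the implication. This works precisely because enlarging the threshold and shrinking the comparison index down to $M$ only tightens the bound, while the inequality $\frac{1}{n}>\frac{1}{M}$ leaves the necessary slack. The remaining ingredients---primitive recursivity of $f$ and the measurability argument---are entirely routine.
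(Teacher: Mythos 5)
Your proof is correct and takes essentially the same route as the paper's: both instantiate Theorem~\ref{th:SLLN-effective-convergence} at $t=3$, define $f(n)=n^3$ for $n\ge M$ padded to the constant $M^3$ below the threshold, and deduce $\mathcal{F}$-measurability from the countability of the primitive recursive functions. Your explicit check of the padded case $n<M$ (using $k\ge M^3$ with index $M$ and $\frac{1}{M}<\frac{1}{n}$) merely spells out a step the paper leaves implicit.
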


\begin{proof}
Let $\omega\in\Omega$.
Suppose that
for every real $t>2$ there exists $M\in\N^+$ such that
for every $n\ge M$ and every $k\in\N^+$ if $k\ge n^t$ then
\begin{equation}\label{cor:SLLN-effective-convergence:eq1}
  \abs{\frac{1}{k}\sum_{i=1}^k X_i(\omega)-E[X_1]}<\frac{1}{n}.
\end{equation}
Then,
by choosing $t$ to be $3$
in particular,
we have that there exists $M\in\N^+$ such that
for every $n\ge M$ and every $k\in\N^+$ if $k\ge n^3$ then
the inequality~\eqref{cor:SLLN-effective-convergence:eq1} holds.
We define a function $f\colon\N\to\N$ by the condition that
$f(n):=n^3$ if $n\ge M$ and $f(n):=M^3$ otherwise.
Then, on the one hand, it follows that $f\colon\N\to\N$ is a primitive recursive function.
On the other hand,
we
have
that for every $n\in\N^+$ and every $k\in\N^+$ if $k\ge f(n)$ then
the inequality~\eqref{cor:SLLN-effective-convergence:eq1} holds.

Hence, it follows from Theorem~\ref{th:SLLN-effective-convergence} that
the event
\begin{equation}\label{cor:SLLN-effective-convergence:eq2}
  \bigcup_{\text{$f$: p.~r.}}\,
  \bigcap_{n=1}^\infty\bigcap_{k=f(n)}^\infty
  \left\{\omega\in\Omega\,\middle\vert\,
    \abs{\frac{1}{k}\sum_{i=1}^k X_i(\omega)-E[X_1]}<\frac{1}{n}
  \right\}
\end{equation}
has the probability $1$,
where the left-most union is over all primitive recursive functions $f$ from $\N$ to $\N$.
Note that
this set~\eqref{cor:SLLN-effective-convergence:eq2}
is certainly $\mathcal{F}$-measurable,
since there are only countably infinitely many primitive recursive functions.
\end{proof}

The following theorem
in probability theory
corresponds to Theorem~\ref{SpeedLimitTheorem-RV}
regarding
algorithmic randomness
in Section~\ref{Section:EffectiveLLN=RV}.

\begin{theorem}
[Almost sure convergence speed limit theorem]
\label{SpeedLimitTheorem-SLLN}
Let $X_1,X_2,\dotsc$ be
independent and identically distributed real random variables on a probability space~$(\Omega,\mathcal{F},P)$.
Suppose that $E[(X_1)^2]<\infty$ and $V[X_1]>0$.
Then the following property holds almost surely:
\begin{quote}
For every real $t>0$, if there exists $M\in\N^+$ such that
for every $n\ge M$ and every $k\in\N^+$ if $k\ge n^t$ then
\begin{equation*}
  \abs{\frac{1}{k}\sum_{i=1}^k X_i-E[X_1]}<\frac{1}{n},
\end{equation*}
then $t>2$.
\end{quote}
This property is certainly $\mathcal{F}$-measurable.
\qed
\end{theorem}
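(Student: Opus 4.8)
The plan is to reduce the almost-sure property to the failure, almost surely, of a single event. Writing $S_k := \sum_{i=1}^k X_i$ and $\mu := E[X_1]$, for each real $t>0$ let $A(t)$ denote the event that there exists $M\in\N^+$ with $\abs{S_k/k-\mu}<1/n$ for all $n\ge M$ and all $k\ge n^t$. First I would observe that $A(t)$ is monotone in $t$: if $t'<t$ then $n^{t}\ge n^{t'}$ for $n\ge 2$, so $\{k:k\ge n^{t}\}\subseteq\{k:k\ge n^{t'}\}$ and hence $A(t')\subseteq A(t)$ (take the threshold $\max\{M',2\}$). Consequently, for each fixed $\omega$ the stated property holds at $\omega$ if and only if $\omega\notin A(2)$: applying the property at $t=2$ forces $\omega\notin A(2)$, and conversely, if $\omega\notin A(2)$ while $A(t)$ holds with some $t\le 2$, then monotonicity gives $A(t)\subseteq A(2)$ and so $\omega\in A(2)$, a contradiction. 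Since $A(2)=\bigcup_{M}\bigcap_{n\ge M}\bigcap_{k\ge n^2}\{\abs{S_k/k-\mu}<1/n\}$ is built from countably many measurable events, the property is $\mathcal{F}$-measurable, and it suffices to prove $P(A(2))=0$.

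Next I would pass to the subsequence $k=4^j$. If $A(2)$ holds with threshold $M$, then for every $j$ with $2^j\ge M$, taking $n:=2^j$ and $k:=4^j=n^2$ yields $\abs{S_{4^j}-4^j\mu}<4^j/2^j=2^j$. Hence $A(2)\subseteq E$, where $E:=\bigcup_{n_1}E_{n_1}$ and $E_{n_1}:=\bigcap_{j\ge n_1}\{\abs{S_{4^j}-4^j\mu}<2^j\}$. As $E$ is an increasing union, it suffices to show $P(E_{n_1})=0$ for every $n_1$.

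The core estimate is the same independence-plus-CLT computation used in the proof of Theorem~\ref{SpeedLimitTheorem-RV}, now carried out purely probabilistically. Setting $T_j:=S_{4^{j+1}}-S_{4^j}$, the variables $T_{n_1},T_{n_1+1},\dots$ are independent, being sums over disjoint blocks of the i.i.d.\ sequence, and each $T_j$ has mean $3\cdot 4^j\mu$ and variance $3\cdot 4^j v$ with $v:=V[X_1]>0$. By the central limit theorem (Theorem~\ref{CLT} below), which applies since $E[(X_1)^2]<\infty$, we get $P(\abs{T_j-3\cdot 4^j\mu}<3\cdot 2^j)\to\int_{-\sqrt{3/v}}^{\sqrt{3/v}}\frac{1}{\sqrt{2\pi}}e^{-x^2/2}\,dx<1$, so there exist $r\in(0,1)$ and $n_0$ with $P(\abs{T_j-3\cdot 4^j\mu}<3\cdot 2^j)<r$ for all $j\ge n_0$. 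The key pointwise implication is that $\abs{S_{4^j}-4^j\mu}<2^j$ and $\abs{S_{4^{j+1}}-4^{j+1}\mu}<2^{j+1}$ together force $\abs{T_j-3\cdot 4^j\mu}<3\cdot 2^j$, since $4^{j+1}-4^j=3\cdot 4^j$. Thus for $n_1\ge n_0$ and every $N>n_1$, the event $\bigcap_{j=n_1}^{N}\{\abs{S_{4^j}-4^j\mu}<2^j\}$ is contained in $\bigcap_{j=n_1}^{N-1}\{\abs{T_j-3\cdot 4^j\mu}<3\cdot 2^j\}$, whose probability equals $\prod_{j=n_1}^{N-1}P(\abs{T_j-3\cdot 4^j\mu}<3\cdot 2^j)\le r^{N-n_1}$ by independence. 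Letting $N\to\infty$ gives $P(E_{n_1})=0$ for $n_1\ge n_0$, and the case $n_1<n_0$ follows since $E_{n_1}\subseteq E_{n_0}$. Hence $P(E)=0$ and $P(A(2))=0$, which completes the proof.

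I expect the main obstacle to be not the CLT estimate, which transcribes the argument of Theorem~\ref{SpeedLimitTheorem-RV} almost verbatim, but rather setting up the logical reduction cleanly: verifying the monotonicity of $A(t)$ in $t$, the pointwise equivalence of the stated property with $\omega\notin A(2)$, and the passage from the full condition $k\ge n^2$ to the single well-chosen subsequence $n=2^j,\ k=4^j$ that exposes the block-independence structure on which the whole estimate rests.
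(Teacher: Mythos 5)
Your proposal is correct and follows essentially the same route as the paper's proof: the central limit theorem applied to the blocks $X_{4^j+1}+\dots+X_{4^{j+1}}$ (equidistributed with $S_{3\cdot 4^j}$) yields a uniform bound $r<1$, block independence turns this into the geometric bound $r^{N-n_1}$, and the case $t\le 2$ is reduced to the subsequence $n=2^j$, $k=4^j$, exactly as in the paper. The only differences are harmless packaging: you handle measurability and the quantifier over real $t$ by reducing the whole property to the complement of the single event $A(2)$ via monotonicity in $t$, where the paper instead reduces the real-$t$ condition to its rational-$t$ equivalent, and you bound the probability by one product over independent blocks where the paper peels off one block per step recursively.
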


In order to prove Theorem~\ref{SpeedLimitTheorem-SLLN},
we use the central limit theorem below
\cite{B95,Chung01,Durrett19,Klenke20}.

\begin{theorem}%
[The cetnral limit theorem]
\label{CLT}
Let $(\Omega,\mathcal{F},P)$ be a probability
space.
Let $X_1,X_2,\dotsc$ be independent and identically distributed
real random variables on $(\Omega,\mathcal{F},P)$
with $E[(X_1)^2]<\infty$.
Suppose that
$V[X_1]>0$.
Then for every reals $a$ and $b$ with $a<b$ it holds that
\begin{equation*}%
  \lim_{n\to\infty}P\left(a\le\frac{1}{\sqrt{nv}}\sum_{k=1}^n(X_k-\mu)\le b\right)
  =\int_{a}^{b} \frac{1}{\sqrt{2\pi}}e^{-x^2/2}dx,
\end{equation*}
where $\mu:=E[X_1]$ and $v:=V[X_1]$.
\qed
\end{theorem}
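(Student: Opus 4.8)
The plan is to prove the theorem by the method of characteristic functions, the standard route underlying the references cited above. Writing $Z_n:=\frac{1}{\sqrt{nv}}\sum_{k=1}^n(X_k-\mu)$, the statement asserts convergence of the probabilities $P(a\le Z_n\le b)$ to the Gaussian integral. Since the limiting law is the standard normal distribution, whose cumulative distribution function $\Phi$ is continuous, this will follow once I establish that $Z_n$ converges to $N(0,1)$ in distribution, and then read off the interval probabilities.

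First I would normalize. Set $Y_k:=(X_k-\mu)/\sqrt{v}$, so that $E[Y_k]=0$ and $V[Y_k]=E[(Y_k)^2]=1$; here the hypothesis $E[(X_1)^2]<\infty$ is exactly what guarantees these quantities are finite, and $v>0$ makes the normalization legitimate. Then $Z_n=\frac{1}{\sqrt{n}}\sum_{k=1}^n Y_k$, and since the $X_k$, hence the $Y_k$, are independent and identically distributed, the characteristic function of $Z_n$ factorizes as
\begin{equation*}
  \varphi_{Z_n}(t)=E\bigl[e^{itZ_n}\bigr]=\left(\varphi_Y\!\left(t/\sqrt{n}\right)\right)^n,
\end{equation*}
where $\varphi_Y$ denotes the common characteristic function of the $Y_k$.

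The analytic heart of the argument is a second-order Taylor expansion of $\varphi_Y$ near the origin. Because $E[Y_1]=0$ and $E[(Y_1)^2]=1$ are finite, one has $\varphi_Y(s)=1-\tfrac12 s^2+o(s^2)$ as $s\to0$; the error bound uses only the existence of the second moment and is obtained from the elementary estimate $|e^{iu}-(1+iu-\tfrac12 u^2)|\le\min\{|u|^2,\,|u|^3/6\}$ together with dominated convergence. Substituting $s=t/\sqrt{n}$ gives $\varphi_Y(t/\sqrt{n})=1-\tfrac{t^2}{2n}+o(1/n)$, and then the standard lemma $(1+z_n/n)^n\to e^{z}$ whenever $z_n\to z$ in $\C$ yields $\varphi_{Z_n}(t)\to e^{-t^2/2}$ for every $t\in\R$. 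Since $e^{-t^2/2}$ is precisely the characteristic function of the standard normal distribution, L\'evy's continuity theorem upgrades this pointwise convergence of characteristic functions to convergence of $Z_n$ to $N(0,1)$ in distribution.

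Finally I would translate convergence in distribution into the stated convergence of probabilities. The boundary $\{a,b\}$ of the interval $[a,b]$ has zero Lebesgue measure, hence zero probability under $N(0,1)$, so convergence in distribution gives $P(a\le Z_n\le b)\to\Phi(b)-\Phi(a)=\int_a^b\frac{1}{\sqrt{2\pi}}e^{-x^2/2}\,dx$, which is exactly the claim. The main obstacle is controlling the error term in the Taylor expansion of $\varphi_Y$ under only a second-moment hypothesis (rather than a third moment) and then justifying the passage to the limit in the $n$th power of a complex quantity; both are handled by the complex-analytic estimates indicated above, while L\'evy's continuity theorem supplies the only nonelementary external input.
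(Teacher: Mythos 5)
Your proof is correct: it is the standard Lindeberg--L\'evy argument via characteristic functions (second-order Taylor expansion under a finite second moment, the lemma $(1+z_n/n)^n\to e^z$, L\'evy's continuity theorem, and the portmanteau step using that $N(0,1)$ assigns zero mass to $\{a,b\}$). The paper itself gives no proof --- it states the theorem with a \qed and cites Billingsley, Chung, Durrett, and Klenke --- and the proof in those references is exactly the one you reconstructed, so your route coincides with the paper's intended source.
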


Then, the proof of Theorem~\ref{SpeedLimitTheorem-SLLN} is given as follows.

\begin{proof}[Proof of Theorem~\ref{SpeedLimitTheorem-SLLN}]
For each $n\in \N^+$, let $S_n:=X_1+\dots +X_n$.
Then, since $E[(X_1)^2]<\infty$ and $V[X_1]>0$,
it follows from the central limit theorem, Theorem~\ref{CLT}, that
for every reals $l_1$ and $l_2$ with $l_1<l_2$ it holds that
\begin{equation*}%
  \lim_{n\to\infty} P\left(l_1\sqrt{nv}\le S_n-n\mu\le l_2\sqrt{nv}\right)
  =\int_{l_1}^{l_2} \frac{1}{\sqrt{2\pi}}e^{-x^2/2}dx,
\end{equation*}
where $\mu:=E[X_1]$ and $v:=V[X_1]>0$.
It follows that
\begin{equation*}%
  \lim_{n\to\infty}
  P\left(-3\cdot 2^n\le S_{3\cdot 4^n}-3\cdot 4^n \mu\le 3\cdot 2^n\right)
  =\int_{-\sqrt{3/v}}^{\sqrt{3/v}}\frac{1}{\sqrt{2\pi}}e^{-x^2/2}dx<1.
\end{equation*}
Therefore there exist a positive integer $n_0$ and a real $r\in(0,1)$ such that
\begin{equation}\label{pr:SpeedLimitTheorem-SLLN:ieq1}
  P \left(\abs{S_{3\cdot 4^n}-3\cdot 4^n \mu}\le 3\cdot 2^n\right)<r
\end{equation}
for all $n\ge n_0$.

Now, let $m$ be an arbitrary integer with $m\ge n_0$.
First, note that, for each $n\in\N^+$ and $\omega\in\Omega$,
if $\abs{S_{4^n}(\omega)-4^{n}\mu}\le 2^{n}$ and
$\abs{S_{4^{n+1}}(\omega)-4^{n+1}\mu}\le 2^{n+1}$ then
\[
  \abs{(X_{4^n+1}+\dots +X_{4^{n+1}})(\omega)-3\cdot 4^{n}\mu}
  =\abs{(S_{4^{n+1}}(\omega)-S_{4^n}(\omega))-3\cdot 4^{n}\mu}
  \le 3\cdot 2^{n}.
\]
Therefore, for each $n\ge m$, we have that
\begin{equation}\label{pr:SpeedLimitTheorem-SLLN:eqs2}
\begin{split}
  &P\left(\bigwedge_{k=m}^{n+1}\abs{S_{4^k}-4^k\mu}\le 2^{k}\right) \\
  &\le P\left(\bigwedge_{k=m}^{n}\abs{S_{4^k}-4^k\mu}\le 2^{k}\;\&\;
    \abs{X_{4^n+1}+\dots +X_{4^{n+1}}-3\cdot 4^{n}\mu}\le 3\cdot 2^{n}\right) \\
  &=P\left(\bigwedge_{k=m}^{n}\abs{S_{4^k}-4^k\mu}\le 2^{k}\right)
    P\left(\abs{X_{4^n+1}+\dots +X_{4^{n+1}}-3\cdot 4^{n}\mu}\le 3\cdot 2^{n}\right) \\
  &=P\left(\bigwedge_{k=m}^{n}\abs{S_{4^k}-4^k\mu}\le 2^{k}\right)
    P\left(\abs{S_{3\cdot 4^n}-3\cdot 4^n\mu}\le 3\cdot 2^n\right) \\
  &\le P\left(\bigwedge_{k=m}^{n}\abs{S_{4^k}-4^k\mu}\le 2^{k}\right)r,
\end{split}
\end{equation}
where the symbol~$\wedge$, as well as $\&$, denotes the logical conjunction, and
the first and second inequalities follow from the
assumption
that
$X_1,X_2,\dotsc$ are independent and identically distributed random variables
on $(\Omega,\mathcal{F},P)$ and
the last  inequality follows from \eqref{pr:SpeedLimitTheorem-SLLN:ieq1}.
Thus, for each $n\ge m$, it follows from \eqref{pr:SpeedLimitTheorem-SLLN:eqs2} that
\begin{equation*}%
  P\left(\bigwedge_{k=m}^{n}\abs{S_{4^k}-4^k\mu}\le 2^{k}\right)
  \le P\left(\bigwedge_{k=m}^{m}\abs{S_{4^k}-4^k\mu}\le 2^{k}\right)r^{n-m} \le r^{n-m}.
\end{equation*}
Since $0<r<1$, it follows that
\begin{equation*}%
  P\left(\forall\,k\ge m\left[\>\abs{S_{4^k}-4^k\mu}\le 2^{k}\>\right]\right)=0.
\end{equation*}
Thus, since $m$ is an arbitrary integer with $m\ge n_0$, we have that
\begin{equation}\label{pr:SpeedLimitTheorem-SSLN:eq7}
  P\left(\exists\,m\ge n_0\;\,\forall\,k\ge m\left[\>\abs{S_{4^k}-4^k\mu}\le 2^{k}\>\right]\right)=0.
\end{equation}

Let $\omega\in\Omega$.
Suppose that
there exist a real $t$ with $0<t\le 2$ and $M\in\N^+$ such that
for every $n\ge M$ and every $k\in\N^+$ if $k\ge n^t$ then
\begin{equation}\label{pr:SpeedLimitTheorem-SSLN:eq8}
  \abs{\frac{1}{k}\sum_{i=1}^k X_i(\omega)-\mu}<\frac{1}{n}.
\end{equation}
Then,
we have that
there exists an integer $m_0\ge n_0$ such that for every $k\ge m_0$ it holds that
$\abs{S_{4^k}(\omega)-4^k\mu}\le 2^{k}$.
Note here that
the condition~\eqref{pr:SpeedLimitTheorem-SSLN:eq8} on $\omega$
is equivalent to
the condition on $\omega$
that there exist a \emph{rational} $t$ with $0<t\le 2$ and $M\in\N^+$ such that
for every $n\ge M$ and every $k\in\N^+$ if $k\ge n^t$ then
\begin{equation*}%
  \abs{\frac{1}{k}\sum_{i=1}^k X_i(\omega)-\mu}<\frac{1}{n}.
\end{equation*}
Therefore,
the condition~\eqref{pr:SpeedLimitTheorem-SSLN:eq8} on $\omega$
is $\mathcal{F}$-measurable.

Hence, it follows from \eqref{pr:SpeedLimitTheorem-SSLN:eq7} that
the following property is $\mathcal{F}$-measurable, and holds almost surely:
For every real $t>0$ if there exists $M\in\N^+$ such that
for every $n\ge M$ and every $k\in\N^+$ if $k\ge n^t$ then
\begin{equation*}
  \abs{\frac{1}{k}\sum_{i=1}^k X_i-\mu}<\frac{1}{n},
\end{equation*}
then $t>2$.
\end{proof}

The following theorem
regarding probability theory
corresponds to
Theorems~\ref{thm:main-ML} and \ref{thm:main-Schnorr}
regarding algorithmic randomness
in Section~\ref{Section:EffectiveLLN=RV}.

\begin{theorem}
[Main result regarding probability theory]
\label{thm:main-SLLN}
Let $X_1,X_2,\dotsc$ be
independent and identically distributed random variables on a probability space~$(\Omega,\mathcal{F},P)$.
Suppose that $V[X_1]>0$ and there exist reals $a$ and $b$ with $a<b$
such that $a\le X_1\le b$ holds almost surely.
Then the following property holds almost surely:
\begin{quote}
For every real $t>0$, the following conditions~(i) and (ii) are equivalent to each other:
\begin{enumerate}
\item There exists $M\in\N^+$ such that
for every $n\ge M$ and every $k\in\N^+$ if $k\ge n^t$ then
\begin{equation*}
  \abs{\frac{1}{k}\sum_{i=1}^k X_i-E[X_1]}<\frac{1}{n}.
\end{equation*}
\item $t>2$.
\end{enumerate}
\end{quote}
This property is certainly $\mathcal{F}$-measurable.
\end{theorem}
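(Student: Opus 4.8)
The plan is to derive Theorem~\ref{thm:main-SLLN} by combining the two one-directional results already proved, namely Theorem~\ref{th:SLLN-effective-convergence} (which supplies the implication (ii)~$\Rightarrow$~(i)) and Theorem~\ref{SpeedLimitTheorem-SLLN} (which supplies the implication (i)~$\Rightarrow$~(ii)), once it is checked that the present hypotheses entail the hypotheses of both.

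First I would verify the hypotheses. The hypothesis of Theorem~\ref{th:SLLN-effective-convergence}---the existence of reals $a<b$ with $a\le X_1\le b$ almost surely---is assumed directly. For Theorem~\ref{SpeedLimitTheorem-SLLN} one needs $E[(X_1)^2]<\infty$ and $V[X_1]>0$. The latter is assumed, and the former follows from boundedness: since $a\le X_1\le b$ almost surely, we have $(X_1)^2\le\max\{a^2,b^2\}$ almost surely, whence $E[(X_1)^2]<\infty$. Thus both theorems apply to the sequence $X_1,X_2,\dotsc$.

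Next I would record the two resulting almost-sure, $\mathcal{F}$-measurable properties. By Theorem~\ref{th:SLLN-effective-convergence}, almost surely it holds that for every real $t>2$ there is an $M\in\N^+$ witnessing condition~(i); equivalently, almost surely the implication (ii)~$\Rightarrow$~(i) holds for every real $t>0$ (for $t\le 2$ it is vacuous). By Theorem~\ref{SpeedLimitTheorem-SLLN}, almost surely it holds that for every real $t>0$, whenever condition~(i) holds then $t>2$; that is, almost surely the implication (i)~$\Rightarrow$~(ii) holds for every real $t>0$.

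Finally I would intersect the two events. Each has probability one, and a countable (here, twofold) intersection of probability-one events again has probability one; on the intersection, conditions~(i) and~(ii) are equivalent for every real $t>0$, which is the asserted property. Its $\mathcal{F}$-measurability is inherited from the measurability of the two constituent properties stated in Theorems~\ref{th:SLLN-effective-convergence} and~\ref{SpeedLimitTheorem-SLLN}. I do not expect any genuine obstacle here; the only step needing a word of care is the elementary observation that the boundedness of $X_1$ furnishes the finite-second-moment hypothesis required by the central-limit-theorem-based Theorem~\ref{SpeedLimitTheorem-SLLN}.
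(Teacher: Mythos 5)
Your proposal is correct and matches the paper's own proof essentially exactly: the paper likewise notes that $a\le X_1\le b$ almost surely gives $E[(X_1)^2]<\infty$, and then deduces the theorem immediately from Theorems~\ref{th:SLLN-effective-convergence} and~\ref{SpeedLimitTheorem-SLLN}. Your explicit remarks about intersecting the two probability-one events and inheriting $\mathcal{F}$-measurability are just the routine details the paper leaves implicit.
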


\begin{proof}
Since there exist reals $a$ and $b$ with $a<b$ such that $a\le X_1\le b$ holds almost surely,
we have that $E[(X_1)^2]<\infty$.
Thus, Theorem~\ref{thm:main-SLLN} follows immediately from
Theorems~\ref{th:SLLN-effective-convergence} and \ref{SpeedLimitTheorem-SLLN}.
\end{proof}

\section*{Acknowledgments}

This work was supported by JSPS KAKENHI Grant Number 22K03409.

\end{document}